\documentclass[12pt]{amsart}
\pdfoutput=1
\usepackage{amsthm,amssymb,amsmath,array,url,microtype,booktabs}
\usepackage[margin=1in,a4paper]{geometry}
\usepackage[dvips]{graphicx}
\usepackage{mathrsfs}
\usepackage[all]{xy}
\usepackage[colorlinks,bookmarks=false,ocgcolorlinks]{hyperref}
\hypersetup{citecolor=blue,linkcolor=red,urlcolor=[rgb]{0,0.5,0}}
\def\arXiv#1{arXiv:\href{http://arXiv.org/abs/#1}{#1}}
\usepackage{multirow} 
\usepackage[clockwise]{rotating}
\usepackage{caption}
\usepackage[parfill]{parskip}    

\theoremstyle{plain}
\newtheorem{theorem}{Theorem}
\newtheorem{corollary}[theorem]{Corollary}
\newtheorem{lemma}[theorem]{Lemma}
\newtheorem{proposition}[theorem]{Proposition}

\newtheorem{question}[theorem]{Question}
\numberwithin{theorem}{section}

\theoremstyle{definition}
\newtheorem{definition}[theorem]{Definition}
\newtheorem{example}[theorem]{Example}

\theoremstyle{remark}
\newtheorem{remark}[theorem]{Remark}

\numberwithin{equation}{section}

\makeatletter
\def\theenumi{\@roman\c@enumi}
\makeatother

\newcommand{\Z}{\mathbb{Z}}
\newcommand{\Q}{{\mathbb Q}}
\newcommand{\C}{{\mathbb C}}
\newcommand{\A}{{\mathbb A}}

\newcommand{\F}{{\mathbb F}}

\renewcommand{\H}{{\mathbb H}}
\newcommand{\Qbar}{{\kern.1ex\overline{\kern-.1ex\Q\kern-.1ex}\kern.1ex}}
\newcommand{\sO}{{\mathcal O}}
\newcommand{\E}{{\mathcal E}}
\newcommand{\Proj}{{\mathbb P}}
\let\P\Proj

\newcommand{\Gal}{\operatorname{Gal}}
\newcommand{\Hom}{\operatorname{Hom}}

\newcommand{\Pic}{\operatorname{Pic}}

\newcommand{\NS}{\operatorname{NS}}

\newcommand{\Km}{\mathit{Km}}
\newcommand{\Ino}{\mathit{Ino}}
\newcommand{\rank}{\operatorname{rank}}
\newcommand{\II}{\mathrm{II}}
\newcommand{\I}{\mathrm{I}}
\newcommand{\IV}{\mathrm{IV}}
\newcommand{\kbar}{\overline{k}}
\newcommand{\MoW}{Mordell-\mbox{\kern-.12em}Weil}
\DeclareMathOperator{\GL}{GL}

\DeclareMathOperator{\SL}{SL}
\DeclareMathOperator{\PSL}{PSL}
\DeclareMathOperator{\MW}{MW}

\let\isom=\simeq  
\newcommand{\<}{\langle} 
\renewcommand{\>}{\rangle}
\newcommand{\ph}{\varphi}

\setlength{\arraycolsep}{4pt}

\subjclass[2010]{Primary 14J27; Secondary 14J28, 11G05} 

\keywords{Elliptic surfaces, K3 surfaces, fields of definition,
  complex multiplication}

\title[Elliptic K3 surfaces associated with the product of two
  elliptic curves]{Elliptic K3 surfaces associated with the product of
  two elliptic curves: Mordell-Weil lattices and their fields of
  definition}

\author{Abhinav Kumar} 
\address{Department of Mathematics, Massachusetts Institute of
  Technology, Cambridge, MA 02139, USA}
\curraddr{Department of Mathematics, Stony Brook University, Stony
  Brook, NY 11794, USA}
\email{thenav@gmail.com}

\author{Masato Kuwata}
\address{Faculty of Economics, Chuo University, 742-1 Higashinakano, 
Hachioji-shi, Tokyo 192-0393, Japan}
\email{kuwata@tamacc.chuo-u.ac.jp}

\date{June 30, 2017}

\makeatletter
\def\subsection{\@startsection{subsection}{2}%
  \z@{0\linespacing\@plus0\linespacing}{.1\linespacing}%
  {\normalfont\selectfont\bfseries\itshape}}
\makeatother

\begin{document}

\begin{abstract}
To a pair of elliptic curves, one can naturally attach two K3
surfaces: the Kummer surface of their product and a double cover of
it, called the Inose surface. They have prominently featured in many
interesting constructions in algebraic geometry and number
theory. There are several more associated elliptic K3 surfaces,
obtained through base change of the Inose surface; these have been
previously studied by Kuwata. We give an explicit description of the
geometric Mordell-Weil groups of each of these elliptic surfaces in
the generic case (when the elliptic curves are non-isogenous). In the
non-generic case, we describe a method to calculate explicitly a
finite index subgroup of the Mordell-Weil group, which may be
saturated to give the full group. Our methods rely on several
interesting group actions, the use of rational elliptic surfaces, as
well as connections to the geometry of low degree curves on cubic and
quartic surfaces. We apply our techniques to compute the full
Mordell-Weil group in several examples of arithmetic interest, arising
from isogenous elliptic curves with complex multiplication, for which
these K3 surfaces are singular.
\end{abstract}

\maketitle

\section{Introduction}

Elliptic K3 surfaces play an important role in the study of the
geometry, arithmetic and moduli of K3 surfaces \cite{P-SS,
  Shioda-Inose, Artin-Swinnerton-Dyer, Morrison, Bogomolov-Tschinkel}.

An elliptic surface $\E$ fibered over $\Proj^1$ with section, over a
field $k$, may be described by a Weierstrass equation of the form
\[
y^2 + a_1(t) xy + a_3(t) y = x^3 + a_2(t) x^2 + a_4(t) x + a_6(t),
\]
where the $a_i(t)$ are rational functions (or even polynomials). Let
us assume that the elliptic fibration has at least one singular
fiber. The following is a fundamental question:
\begin{question}
Find generators for the (finitely generated) Mordell-Weil group
$\E(\Proj^1)$.
\end{question}
Usually, one is interested in the {\em geometric} Mordell-Weil group
$\E(\kbar(t))$, as well as its field of definition and the Galois
action of $\Gal(\kbar/k)$.

A theorem of Shioda and Tate connects the Mordell-Weil group with the
Picard group or the N\'eron-Severi group of $\E$ (note that linear
equivalence and algebraic equivalence coincide). Namely, there is an
intersection pairing on $\NS(\E)$, making it into a Lorentzian
lattice. The class of the zero section $O$ and the fiber $F$
contribute a unimodular sublattice of signature $(1,1)$, which is
therefore either the hyperbolic plane $U$ or the odd lattice $\I_{1,1}
= \langle 1 \rangle \oplus \langle -1 \rangle$, depending on the Euler
characteristic $\chi(\sO_\E)$. Furthermore, every reducible fiber over
a point $v \in \Proj^1(\kbar)$ contributes the negative of a root
lattice $T_v$ to $\NS(\E)$. Let the {\em trivial lattice} $T$ be
defined as $(\Z O + \Z F) \oplus (\bigoplus T_v)$. The theorem says
that the Mordell-Weil group $\E(\Proj^1)$ is isomorphic to
$NS(\E)/T$. In addition, the natural isomorphism induces an isometry
of lattices, once we mod out by torsion.

It guarantees that determination of the Mordell-Weil group is
equivalent to finding the Picard group or the N\'eron-Severi lattice
of the K3 surface. The theory of Mordell-Weil lattices has found
numerous applications in recent years, from construction of
record-breaking dense lattices to finding high rank elliptic curves to
the inverse Galois problem.

Recently, algorithms have been outlined for the basic question above
(see \cite{Poonen-et-al}, or for the case of elliptic K3 surfaces
\cite{Charles}); however, these algorithms require point counting over
large finite fields, and therefore are not practicable in most cases.

In this paper, we solve this question for several families of K3
surfaces of arithmetic and geometric interest. Namely, let $E_1$ and
$E_2$ be two elliptic curves, and form the Kummer surface of their
product $\Km(E_1 \times E_2)$. This K3 surface carries a lot of the
arithmetic information of the product abelian surface. It has several
different elliptic fibrations \cite{Oguiso, Kuwata-Shioda}, but one in
particular has been the focus of a lot of attention in arithmetic
algebraic geometry. This elliptic fibration (to be described below)
has two reducible fibers of type $\IV^*$ if $E_1$ and $E_2$ are
non-isomorphic. By taking a base change along an appropriate double
cover $\Proj^1 \to \Proj^1$ (namely $t \mapsto t^2$, where $t$ is the
elliptic parameter, chosen to place the $\IV^*$ fibers at $t = 0$ and
$t = \infty$), a natural double cover of the Kummer surface can be
formed, which is also an elliptic K3 surface; it is called the Inose
surface and has been useful in several contexts \cite{Shioda-Inose,
  Inose:singular-K3, Kumar, Elkies, Shioda:Kummer-sandwich}. In
\cite{Kuwata:MW-rank}, Kuwata defined elliptic K3 surfaces $F^{(1)}$
through $F^{(6)}$ through a base change of the Inose surface, and used
them to produce elliptic K3 surfaces over $\Q$ of every geometric rank
between $1$ and $18$, except for $15$.  (The rank $15$ case was dealt
with several years later by Kloosterman \cite{Kloosterman-rank15}. See
also \cite{Top-de Zeeuw} for an extension, and the note \cite{rank15}
which provides a construction starting from a Kummer surface.) In
particular, $F^{(1)}$ is the Inose surface, and $F^{(2)}$ is the
Kummer surface.

The main purpose of this article is to describe completely explicitly
the Mordell-Weil lattices of these elliptic K3 surfaces $F^{(n)}_{E_1
  \times E_2}$ in the ``generic'' case\footnote{This is a slight abuse
  of notation: in the geometric moduli space $\mathbb{A}_{j_1} \times
  \mathbb{A}_{j_2}$, pairs of isogenous elliptic curves cut out a
  countable union of curves. So we are really describing a ``very
  general'' situation.}, i.e., when $E_1$ and $E_2$ are not
isogenous. As a result, we also recover by specialization a finite
index sublattice of the full Mordell-Weil lattice in the non-generic
case. We will show that the splitting field in the generic case is a
subfield of the compositum $\kappa_n := k(E_1[n], E_2[n])$ of the
$n$-torsion subfields of the two elliptic curves, where $k$ is the
base field, and use natural group actions of $\GL_2(\Z/n\Z)^2$ on the
universal family of $F^{(n)}$ over pairs of elliptic curves with level
$n$ structure to give relatively concise descriptions of explicit
bases for the Mordell-Weil lattices.

The last part of this paper is arithmetic and describes the
Mordell-Weil lattices of $F^{(n)}_{E_1\times E_2}$, for those pairs of
elliptic curves such that the Inose surface $F^{(1)}$ is defined over
$\Q$ and is singular, i.e. has the maximal Picard number $20$. In this
case, $E_1$ and $E_2$ must be non-isomorphic isogenous curves with
complex multiplication. In fact, this situation is connected to a
beautiful theorem of Shioda and Inose \cite{Shioda-Inose}, relating
singular K3 surfaces over $\C$ up to isomorphism with classes of
positive definite even quadratic forms. They deduced this theorem from
the work of Piatetski-Shapiro and Shafarevich \cite{P-SS}, which
connected singular Kummer surfaces to doubly even forms, by the use of
a double cover which is nowadays called a Shioda-Inose structure. The
upshot is that the map $X \to T_X$ which just takes the transcendental
lattice of a K3 surface, establishes a bijective correspondence
between Inose surfaces $F^{(1)}_{E_1, E_2}$ and even positive definite
quadratic forms. Shioda and Inose also determined the zeta functions
of these singular K3 surfaces. In our work, we look at the most
arithmetically interesting of these K3 surfaces: namely those which
can be defined over $\Q$. In addition we impose the condition that the
Inose fibration have the maximum possible rank $18$. This requires
that $E_1$ and $E_2$ be isogenous but non-isomorphic. A few of the
examples arise from $E_1$ and $E_2$ being defined over $\Q$, but most
of them arise from $\Q$-curves \cite{Gross}. Our methods can be used
to determine the full Mordell-Weil group and N\'eron-Severi lattice in
each case; we give several illustrative examples.

We note some prior work toward computation of the Mordell-Weil groups
of the surfaces $F^{(n)}$ studied in this paper. In
\cite{Kuwata:MW-rank}, Kuwata used rational quotients and twists to
decribe a method to compute the Mordell-Weil group of $F^{(3)}$. This
was made somewhat more explicit and extended to the other $F^{(n)}$ by
Kloosterman in \cite{Kloosterman-explicit}, who computed polynomials
(the most complicated one, for $F^{(5)}$, having degree $240$) whose
solution would yield generators for the Mordell-Weil groups in the
generic case, and lead to a finite index subgroup in other
cases. Still, it was not clear what the systematic solution of these
polynomial equations should ``look like''. In our work, we make use of
two key insights to elucidate the structure of the Mordell-Weil groups
of these surfaces. The first is that the splitting field of the
N\'eron-Severi group, or of the Mordell-Weil group, of $F^{(n)}$
associated with curves $E_1$ and $E_2$ should be related to the
$n$-torsion fields of these
two elliptic curves. This is a natural leap of faith from the
situation of the Kummer surface, which is relatively well studied. The
second is that the action of $\SL_n(\Z/n\Z)$ on the moduli space
$X(n)$ of elliptic curves with full level $n$ structure gives rise to
an action of $\SL_n(\Z/n\Z)^2$ on the universal family of $F^{(n)}$
over $X(n) \times X(n)$, and therefore on the family of Mordell-Weil
groups. This action allows us to propagate a single section to
essentially obtain a basis of the Mordell-Weil lattice. In addition to
these two observations, we use the technique of studying associated
rational elliptic surfaces, for which we have better control of the
Mordell-Weil group, to complete the description of the Mordell-Weil
lattices in the generic case.  In the non-generic case,
\cite{Shioda:correspondence}, Shioda related the Mordell-Weil group of
$F^{(1)}$ to isogenies between the two elliptic curves. We make
Shioda's construction completely explicit, even carrying out the
transformation from isogenies to sections in many examples.

\subsection{Outline} 
In Section \ref{sec:defns}, we define the elliptic K3 surfaces $F^{(1)}$
through $F^{(6)}$ that we shall study in this paper, and recall
relevant results from the literature. Section \ref{sec:F1andF2}
describes the explicit connection between the Mordell-Weil group of
$F^{(1)}$ and isogenies between the two elliptic curves. It also
describes the Mordell-Weil group of the Kummer surface $F^{(2)}$ in
the generic case (i.e., when the elliptic curves are not isogenous).
Section \ref{sec:F3} computes the Mordell-Weil group of $F^{(3)}$ in
the generic case by introducing two of the key methods in this paper:
the study of associated rational elliptic surface (for which the
determination of the Mordell-Weil group is easier), and the use of a
large group of symmetries acting on the K3 surface. In section
\ref{sec:F4}, we compute the Mordell-Weil group of $F^{(4)}$ in the
generic case by two methods: first by using the associated rational
elliptic surfaces, and second by analyzing curves of low degree on a
quartic model of this K3 surface. Section \ref{sec:F5} describes the
Mordell-Weil group of $F^{(5)}$ in the generic case. In section
\ref{sec:F6}, we compute the Mordell-Weil group of $F^{(6)}$ again by
two methods: first by analyzing rational elliptic surfaces, and second
by transference from $F^{(3)}$ and its twist, a cubic surface. In
Section \ref{sec:singular}, we recall the correspondence between even
binary quadratic forms and singular K3 surfaces, and describe several
Inose surfaces which can be defined over $\Q$. Finally, in section
\ref{sec:examples}, we apply our methods to give an explicit
description of the Mordell-Weil groups of $F^{(6)}$ obtained from some
of these singular Inose surfaces.

\subsection{Computer files}
Auxiliary files containing computer code to verify the calculations in
this paper, as well as some formulas omitted for lack of space, are
at \url{http://arxiv.org/e-print/1409.2931}. The file at
this URL is a \texttt{tar} archive, which can be extracted to produce
not only the {\LaTeX} file for this paper, but also the computer
code. The text file \texttt{README.txt} briefly describes the various
auxiliary files.

\subsection{Acknowledgements}
Kumar was supported in part by NSF CAREER grant DMS-0952486, and by a
grant from the Solomon Buchsbaum Research Fund. Kuwata was partially
supported by JSPS Grant-in-Aid for Scientific Research (C) Grant
Number 23540028, and by the Chuo University Grant for Special
Research. We thank Tetsuji Shioda for helpful discussions, and Noam
Elkies, Remke Kloosterman and Matthias Sch\"utt for useful comments on
an earlier draft of the paper. The computer algebra systems
\texttt{Magma}, \texttt{sage}, \texttt{gp/PARI}, \texttt{Maxima} and
\texttt{Maple} were used in the calculations for this paper.

\section{Elliptic surfaces associated with the product of elliptic
  curves}

Throughout this paper the base field $k$ is assumed to be a number
field.

\subsection{Kummer surfaces of product type, the Inose fibration and the Inose surface} \label{sec:defns}

Let $E_1$ and $E_2$ be two elliptic curves over $k$.  Later in this
paper, we will be concerned with fields of definition of the
Mordell-Weil groups of various elliptic fibrations. Here, we give a
summary of Kummer surfaces and related constructions, being careful
about the field of definition.

Let $\Km(E_{1}\times E_{2})$ be the Kummer surface associated with the
product abelian surface $E_1\times E_2$, namely the minimal
desingularization of the quotient surface $E_1\times E_2/\{\pm1\}$.  If
$E_1$ and $E_2$ are defined by the equations
\begin{equation}\label{ell}
\begin{aligned}
\null &E_1 : y^2=x^3+ax+b,\\
&E_2 : y^2=x^3+cx+d,
\end{aligned}
\end{equation}
an affine singular model of $\Km(E_{1}\times E_{2})$ may be given as
the hypersurface in $\A^3$ defined by the equation
\begin{equation}\label{Kummer}
x_{2}^3+cx_{2}+d=t_{2}^2(x_{1}^3+ax_{1}+b).
\end{equation}
Then the map $\Km(E_{1}\times E_{2})\to \P^1$ induced by
$(x_{1},x_{2},t_{2})\mapsto t_{2}$ is an elliptic fibration, which is
sometimes called the Kummer pencil.  This elliptic fibration has
obvious geometric sections (i.e., sections defined over $\bar k$), but
they are defined only over the extension $k(E_{1}[2],E_{2}[2])/k$
obtained by adjoining the coordinates of points of order $2$.

Take a parameter $t_{6}$ such that $t_{2}=t_{6}^3$, and consider
\eqref{Kummer} as a family of cubic curves in $\P^2$ over the field
$k(t_{6})$.  Then, this family has a rational point $(1:t_{6}^2:0)$
(cf. Mestre \cite{Mestre:given-j-invariant} and Kuwata-Wang
\cite{Kuwata-Wang}).  Using this point, we convert \eqref{Kummer} to
the Weierstrass form:
\begin{equation}\label{eq:F6}
Y^2 = 
X^3 - 3\,ac\,X  
+ \frac{1}{64}\Bigl(\Delta_{E_{1}}t_{6}^{6}+864\,bd+\frac{\Delta_{E_{2}}}{t_{6}^{6}}\Bigr),
\end{equation}
where $\Delta_{E_{1}}$ and $\Delta_{E_{2}}$ are the discriminants of
$E_{1}$ and $E_{2}$ respectively:
\[
\Delta_{E_{1}}=-16(4a^{3}+27b^{2}), \quad \Delta_{E_{2}} = -16(4c^{3}+27d^{2}).
\]
The change of coordinates between \eqref{Kummer} and \eqref{eq:F6} are
given by
\begin{equation}\label{change-of-var}
\left\{
\begin{aligned}
&X=\frac{-t_{6}^2(2at_{6}^4-c)x_{1} - 3(bt_{6}^6-d) - (at_{6}^4-2c)x_{2}}{t_{6}^{2}(t_{6}^2x_{1}-x_{2})},
\\
&Y=\frac{6(at_{6}^4-c)(bt_{6}^6-d)+6(at_{6}^4-c)(at_{6}^6x_{1}-cx_{2})
-9(bt_{6}^6-d)(t_{6}^4x_{1}^2-x_{2}^2)}
{2t_{6}^{3}(t_{6}^2x_{1}-x_{2})^2}.
\end{aligned}
\right.
\end{equation}
Note that if we choose other models of $E_1$ and $E_2$, we still
obtain an isomorphic equation. Indeed, if we replace the equations of
$E_1$ and $E_2$ by
\begin{align*}
&E_1 : y^2=x^3+(k^4a)x+(k^6b),\\
&E_2 : y^2=x^3+(l^4c)x+(l^6d),
\end{align*}
then replacing $(X,Y,t_{6})$ by $(l^4X,l^6Y,(l/k)t_{6})$, we recover
equation \eqref{eq:F6}.

It is easy to see that equation \eqref{eq:F6} is invariant under the two
automorphisms of $t_{6}$-line:
\begin{equation}\label{D6-action}
\begin{alignedat}{2}
&\sigma:t_{6} \mapsto \zeta_{6}t_{6}, \quad &&\text{where $\zeta_{6}$ is a primitive sixth root of unity,} \\
&\tau: t_{6} \mapsto \delta/t_{6},
&&\text{where $\delta$ is a chosen sixth root of  $\Delta_{2}/\Delta_{1}$.}
\end{alignedat}
\end{equation}
Taking the quotient by the action of $\sigma$, or equivalently,
setting $t_{1}=t_{6}^{6}$, we obtain an elliptic curve over the field
$k(t_{1})$, which we denote by $F^{(1)}_{E_{1},E_{2}}$:
\begin{equation}\label{eq:F1}
F^{(1)}_{E_{1},E_{2}}: Y^2 = 
X^3 - 3\,ac\,X  
+ \frac{1}{64}\Bigl(\Delta_{E_{1}}t_{1}+864\,bd+\frac{\Delta_{E_{2}}}{t_{1}}\Bigr).
\end{equation}

\begin{definition} 
  The Kodaira-N\'eron model of the elliptic curve
  $F^{(1)}_{E_{1},E_{2}}$ over $k(t_{1})$ defined by \eqref{eq:F1} is
  called the Inose surface associated with $E_{1}$ and $E_{2}$, and it
  is denoted by $\Ino(E_{1},E_{2})$.
\end{definition}

\begin{remark}
In \cite{Shioda-Inose}, what we call the Inose surface in this article
was originally constructed as a double cover of $\Km(E_{1}\times
E_{2})$.  Shioda and Inose then showed that the following diagram of
rational maps, called a Shioda-Inose structure, induces an isomorphism
of integral Hodge structures on the transcendental lattices of
$E_{1}\times E_{2}$ and $\Ino(E_{1},E_{2})$.
\[
\xymatrix@H=3.6ex@C=-2em{
E_{1}\times E_{2} \ar@{-->}[dr]_{\pi_{0}}
& & \ar@{-->}[dl]^{\pi_{1}} \Ino(E_{1},E_{2})\\
& \Km(E_{1}\times E_{2}) & }
\]
Since the Kodaira-N\'eron model of $F^{(2)}_{E_{1}\times E_{2}}$ is
isomorphic over~$\kbar$ to $\Km(E_{1}\times E_{2})$ (with $t_2$ being
the elliptic parameter of the Inose fibration
\cite{Inose:singular-K3}), we have another quotient map from
$\Km(E_{1}\times E_{2})$ to $\Ino(E_{1},E_{2})$.  Thus, we have a
``Kummer sandwich'' diagram:
\[
\xymatrix{
\Km(E_{1}\times E_{2}) \ar@{-->}[r]^{\pi_{2}} & \Ino(E_{1},E_{2})
 \ar@{-->}[r]^{\pi_{1}} & \Km(E_{1}\times E_{2})}
\]  
(cf. Shioda \cite{Shioda:Kummer-sandwich}).  However, with our
definition of $\Ino(E_{1},E_{2})$, the quotient map $\pi_{1}$ may not
be defined over the base field $k$ itself, but rather only over
$k(E_{1}[2],E_{2}[2])$ (or an extension of $k$ including some of the
$2$-torsion of $E_1$ and $E_2$).
\end{remark}

\begin{definition}
  For $n=1,\dots,6$, let $t_{n}$ be a parameter satisfying
  $t_{n}^{n}=t_{1}$.  Define the elliptic curve
  $F^{(n)}_{E_{1},E_{2}}$ over $k(t_{n})$ by
  \[
  F^{(n)}_{E_{1},E_{2}}: Y^2 = X^3 - 3\,ac\,X +
  \frac{1}{64}\Bigl(\Delta_{E_{1}}t_{n}^{n} +
  864\,bd+\frac{\Delta_{E_{2}}}{t_{n}^{n}}\Bigr).
  \]
\end{definition}

When $E_{1}$ and $E_{2}$ are understood, we write $F^{(n)}$ for short.
\begin{remark}
The Kodaira-N\'eron model of $F^{(n)}_{E_{1},E_{2}}$ is a K3 surface
for $n=1,\dots,6$, but not for $n\ge 7$.
\end{remark}

By Inose's theorem (\cite[Cor.~1.2]{Inose:quartic}), the Picard number
of the K3 surface $F^{(n)}$ does not depend on~$n$, and equals the
Picard number of $\Km(E_1\times E_2)$. It is therefore at least~$18$.
These surfaces are clearly of geometric and arithmetic interest, being
closely related to abelian surfaces which are the product of two
elliptic curves. We now summarize what is known about the geometric
Picard and \MoW\ groups of these elliptic K3 surfaces.

Define $R(t)$ and $S(t)$ by letting the Inose surface as in equation
\eqref{eq:F1} be $Y^2 = X^3 + R(t)X + S(t)$, and let $h = \rank
\Hom(E_1, E_2)$, so that $0 \leq h \leq 4$. The table below lists the
minimal Weierstrass equations, the configuration of singular fibers,
and the Mordell-Weil rank in the ``generic'' case $j(E_1) \neq j(E_2)$
and $j(E_i) \neq 0$. In the other cases, which will not be relevant to
this paper, we refer the reader to \cite[Th.~4.1]{Kuwata:MW-rank} for
the analogous data.

\begin{table}[h]
\caption*{The surfaces $F^{(n)}$ in the case $j(E_i)$ are nonzero and
  unequal.}
\begin{center}
\begin{tabular}{clcc}
\toprule
$n$ & \hfil Minimal equation & Singular fibers & Rank \\
\midrule 
1 & $Y^2= X^3 +t^4R(t) X + t^5 S(t)$  & $2 \II^*, 4 \I_1$ & $h$   \\
2 & $Y^2= X^3 +t^4R(t) X + t^4 S(t^2)$ & $2 \IV^*, 8 \I_1$ & $4+h$ \\
3 & $Y^2= X^3 +t^4R(t) X + t^3 S(t^3)$ & $2 \I_0^*, 12 \I_1$ & $8+h$ \\
4 & $Y^2= X^3 +t^4R(t) X + t^2 S(t^4)$ & $2 \IV, 16 \I_1$ & $12+h$ \\
5 & $Y^2= X^3 +t^4R(t) X + t S(t^5)$ & $2 \II, 20 \I_1$ & $16+h$ \\
6 & $Y^2= X^3 +t^4R(t) X + S(t^6)$ & $24 \I_1$ & $16+h$ \\
\bottomrule
\end{tabular}
\end{center}
 
\end{table}

The N\'eron-Severi and transcendental lattices were further analyzed
by Shioda \cite{Shioda:2000, Shioda:correspondence,
  Shioda:sphere-packing}, culminating in the following theorems, which
are stated in the geometric situation $k = \C$.  In this case we may
scale $x,y,t$ to work with a simpler equation of $F^{(n)}$, as in
\cite{Inose:quartic, Shioda:2000}:
\[
Y^2=X^3-3\root 3\of {J_{1}J_{2}}\,X
+t^{n}+\frac{1}{t^{n}}-2\sqrt{(1-J_{1})(1-J_{2})},
\]
where $J_{i}=j(E_{i})/1728$.  

\begin{theorem}[Shioda \cite{Shioda:correspondence}] \label{structure-theorem1-shioda}
There is a lattice isomorphism $T(F^{(n)}) \cong T(F^{(1)}) \langle n
\rangle$. In particular, $\det T(F^{(n)}) = \det T(F^{(1)}) \cdot
n^{\lambda}$, where $\lambda = 4-h$. The \MoW\ group $\MW(F^{(n)})$ is
torsion-free, except when $j(E_{1}) = j(E_{2}) = 0$ and $n = 2,4,6$,
or $j(E_{1}) =j(E_{2}) = 1728$ and $n = 3,6$.
\end{theorem}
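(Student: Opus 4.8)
The plan is to establish the lattice isomorphism $T(F^{(n)}) \cong T(F^{(1)})\langle n\rangle$ by analyzing the degree-$n$ base change $\phi_n\colon F^{(n)} \to F^{(1)}$ given by $t_n \mapsto t_n^n = t_1$, and its effect on the transcendental lattices. The map $\phi_n$ realizes $F^{(1)}$ as the quotient of $F^{(n)}$ by the cyclic group $\langle \sigma_n\rangle \cong \Z/n\Z$ of automorphisms induced by $t_n \mapsto \zeta_n t_n$ (a restriction of the $\sigma$ in \eqref{D6-action}), and the induced pullback $\phi_n^*$ on $H^2$ sends the transcendental lattice of $F^{(1)}$ into that of $F^{(n)}$, since algebraic cycles pull back to algebraic cycles and $\phi_n^*$ preserves the Hodge decomposition.

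First I would identify $T(F^{(n)})$ with the $\sigma_n$-invariant part of $H^2(F^{(n)})$ intersected with the transcendental lattice: because $F^{(1)} = F^{(n)}/\langle\sigma_n\rangle$, the pullback $\phi_n^*$ gives an isomorphism of $T(F^{(1)})\otimes\Q$ onto the $\sigma_n$-invariants $T(F^{(n)})^{\sigma_n}\otimes\Q$. The key point, which uses Inose's theorem (cited in the excerpt, \cite[Cor.~1.2]{Inose:quartic}) that the Picard number — and hence the rank of the transcendental lattice — is independent of $n$, is that $\sigma_n$ acts \emph{trivially} on $T(F^{(n)})$: the transcendental lattice has constant rank $22 - \rho = 4 - h + 2$, so the $\sigma_n$-invariant part already accounts for the whole transcendental lattice rationally, forcing $T(F^{(n)})^{\sigma_n} = T(F^{(n)})$. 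Thus $\phi_n^*\colon T(F^{(1)})\otimes\Q \xrightarrow{\sim} T(F^{(n)})\otimes\Q$ is an isomorphism of rational Hodge structures.

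Next I would compute the effect on the intersection pairing. For a degree-$n$ map of surfaces one has $\phi_n^* \alpha \cdot \phi_n^* \beta = n\,(\alpha\cdot\beta)$ by the projection formula, so $\phi_n^*$ scales the form by $n$; this is exactly the statement $T(F^{(1)})\langle n\rangle$. The remaining work is integral rather than rational: I must check that $\phi_n^*$ identifies $T(F^{(1)})$ with $T(F^{(n)})$ \emph{as lattices} (not merely up to finite index), which amounts to showing $\phi_n^*$ is surjective onto $T(F^{(n)})$, equivalently that every transcendental class on $F^{(n)}$ descends. This follows from the observation that $\phi_{n*}\phi_n^* = n\cdot\mathrm{id}$ together with the $\sigma_n$-invariance, identifying $T(F^{(n)})$ with the saturation of $\phi_n^* T(F^{(1)})$; one then verifies there is no extra index by a discriminant count, using that $\det T(F^{(n)}) = \det T(F^{(1)})\cdot n^{\lambda}$ with $\lambda = \rank T(F^{(1)}) = 4-h$, which is precisely what scaling by $\langle n\rangle$ predicts. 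The determinant formula thus serves both as a consequence and as the consistency check that pins down the integral structure.

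Finally, for the torsion statement on $\MW(F^{(n)})$, I would combine Shioda--Tate with an analysis of the torsion sections. By Shioda--Tate, $\MW(F^{(n)})_{\mathrm{tors}}$ injects into the discriminant group of the trivial lattice $T$, and is constrained by the configuration of singular fibers listed in the table. Torsion sections correspond to sections meeting the reducible fibers in prescribed non-identity components, so I would run through the fiber types ($2\,\II^*$, $2\,\IV^*$, $2\,\I_0^*$, $2\,\IV$, $2\,\II$, and $24\,\I_1$ for $n=1,\dots,6$) and show that a nontrivial torsion section can exist only when the height-pairing and component-group constraints are simultaneously satisfiable; this singles out exactly the cases $j(E_i)=0$ with $n=2,4,6$ (where extra automorphisms and $3$-torsion appear) and $j(E_i)=1728$ with $n=3,6$ (where $2$-torsion appears). \textbf{The hardest part} will be the integrality argument — ruling out a nontrivial finite index between $\phi_n^* T(F^{(1)})$ and $T(F^{(n)})$ — since the rational isomorphism and the projection-formula scaling are formal, whereas the precise integral identification requires the determinant bookkeeping to close up with no slack.
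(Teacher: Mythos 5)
The paper offers no proof of this statement: it is quoted verbatim from Shioda \cite{Shioda:correspondence} (see also \cite{Shioda:sphere-packing}), so there is no internal argument to compare against. Judged on its own terms, your skeleton --- pull back along the degree-$n$ base change, use the projection formula to get the scaling by $n$, and use Inose's theorem that $\rho$ is independent of $n$ to see that the rational transcendental Hodge structure does not grow --- is the right one and matches Shioda's actual strategy. (One small slip: $\rank T(F^{(n)}) = 22 - \rho = 4 - h = \lambda$, not $4-h+2$; this is also why the exponent in the determinant formula is $\lambda$.)

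The genuine gap is exactly where you placed it, and your proposed fix does not work: you cannot rule out a nontrivial finite index between $\phi_n^* T(F^{(1)})$ and $T(F^{(n)})$ by ``a discriminant count, using that $\det T(F^{(n)}) = \det T(F^{(1)})\cdot n^{\lambda}$,'' because that determinant identity is part of the statement being proved --- the argument is circular. An independent input is needed: either a direct computation of $\det \NS(F^{(n)})$ via Shioda--Tate from the explicit trivial lattice and Mordell--Weil lattice (which is how the discriminants are pinned down in \cite{Shioda:sphere-packing}, and is independent of the transcendental-lattice comparison since $\det T = \pm\det\NS$ for a K3), or a primitivity argument for the embedding $\phi_n^* T(F^{(1)}) \hookrightarrow H^2(F^{(n)},\Z)$. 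A second, smaller gap: your torsion analysis runs through the fiber configurations $2\,\II^*,\dots,24\,\I_1$, but those are the \emph{generic} configurations (the table explicitly assumes $j(E_i)$ nonzero and unequal), whereas the exceptional cases in the statement --- $j(E_1)=j(E_2)=0$ with $n=2,4,6$ and $j(E_1)=j(E_2)=1728$ with $n=3,6$ --- are precisely those where the fibers degenerate to a different configuration. As written, your Shioda--Tate/discriminant-group argument would prove torsion-freeness in the generic case but would never detect the exceptional cases; you would need the degenerate fiber data from \cite[Th.~4.1]{Kuwata:MW-rank} to exhibit the torsion sections that actually occur there.
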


\begin{remark}
The notation $\<n\>$ means that the pairing of the lattice is
multiplied by $n$.
\end{remark}

\begin{theorem}[Shioda \cite{Shioda:sphere-packing}] \label{structure-theorem2-shioda}
There is a natural isomorphism of lattices
\[
\Hom(E_1, E_2) \cong F^{(1)}(k(t)).
\]
\end{theorem}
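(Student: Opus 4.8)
The plan is to produce an explicit homomorphism from $\Hom(E_1,E_2)$ to the Mordell-Weil group $F^{(1)}(\kbar(t))$, show it is an isometry of lattices, and then verify it is surjective by a rank count. The starting point is the Shioda-Inose structure recalled in the excerpt: the transcendental lattices satisfy $T(\Ino(E_1,E_2)) \cong T(E_1\times E_2)$, so the N\'eron-Severi lattices have the same rank, and by the table the Inose fibration has Mordell-Weil rank $h = \rank\Hom(E_1,E_2)$. Thus both sides have the same rank, and the problem reduces to constructing a natural injective isometry (up to a normalizing scalar) and checking it lands in, and fills out, a finite-index subgroup of the correct index $1$.

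First I would construct the map geometrically. A homomorphism $\ph\colon E_1\to E_2$ has a graph $\Gamma_\ph \subset E_1\times E_2$, which is a curve, hence a divisor class in $\NS(E_1\times E_2)$; equivalently it is an element of $\Hom(E_1,E_2)$ sitting inside $\NS(E_1\times E_2)$ as a direct summand complementary to the classes of the two fiber rulings $E_1\times\{O\}$ and $\{O\}\times E_2$. The rational maps $\pi_0$ and $\pi_1$ of the Shioda-Inose diagram transport transcendental and (up to the trivial sublattices) algebraic cycles between $E_1\times E_2$ and $\Ino(E_1,E_2)$. Concretely, I would push the graph $\Gamma_\ph$ through the quotient $E_1\times E_2 \to \Km(E_1\times E_2)$ and then through the double cover $\Ino(E_1,E_2)\to\Km(E_1\times E_2)$, and intersect with the generic fiber of the Inose fibration \eqref{eq:F1} to obtain a section, i.e.\ a point in $F^{(1)}(\kbar(t))$. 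The assignment $\ph \mapsto (\text{this section})$ is additive because the graph construction $\ph\mapsto\Gamma_\ph$ is additive modulo the trivial classes, and the correspondences induced by $\pi_0,\pi_1$ act linearly on the relevant quotients of N\'eron-Severi lattices.

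Next I would check that this map is an isometry onto its image. The height pairing on $\MW(F^{(1)})$ is, by Shioda-Tate, the negative of the intersection pairing on $\NS(\Ino(E_1,E_2))$ modulo the trivial lattice $T$ (the two $\II^*$ fibers together with $O$ and $F$). On the source side, the pairing on $\Hom(E_1,E_2)$ is the one induced from the intersection form on $\NS(E_1\times E_2)$ restricted to the complement of the fiber classes, which corresponds to the form $\deg(\ph)$ up to sign conventions. Since the Shioda-Inose structure induces a Hodge-isometry on transcendental lattices and hence an isometry on the orthogonal complements (the transcendental lattices determine the N\'eron-Severi lattices up to the trivial parts), the two pairings match up to an overall scalar; I would pin down that this scalar is exactly $1$ using the normalized equation
\[
Y^2 = X^3 - 3\root 3\of{J_1 J_2}\,X + t + \frac1t - 2\sqrt{(1-J_1)(1-J_2)}
\]
and a direct computation of one section's height in a test case, e.g.\ taking $\ph$ to be an isogeny of small degree.

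Finally I would establish surjectivity. Having a rank-$h$ injective isometry $\Hom(E_1,E_2)\hookrightarrow F^{(1)}(\kbar(t))$ with matching ranks, the cokernel is finite; to rule out proper finite index I would compare discriminants. By Shioda-Tate the discriminant of $\MW(F^{(1)})$ is controlled by $\det\NS(\Ino)$, which equals $\det T(\Ino) = \det T(E_1\times E_2)$, and the latter is exactly the discriminant of $\Hom(E_1,E_2)$ as a lattice under $\deg$. Matching these discriminants forces index $1$, giving the isomorphism. The main obstacle I anticipate is the surjectivity/saturation step: pinning down the normalizing scalar and confirming that the image is saturated (not just finite index) requires carefully tracking how the trivial lattice $T$ of the Inose fibration interacts with the glue between $\NS$ and $T(\Ino)$ under the Shioda-Inose correspondence, rather than any difficulty in writing down the map itself.
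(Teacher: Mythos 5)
This theorem is not proved in the paper at all: it is quoted from Shioda \cite{Shioda:sphere-packing}, and the closest the paper comes to a proof is the refined statement $F^{(1)}(\kbar(t_1))\cong\Hom(E_1,E_2)\langle 2\rangle$ quoted at the start of Section \ref{sec:F1andF2} together with Proposition \ref{prop:sectionsfromisogeny}, which realizes the map $\ph\mapsto P_\ph^+-P_\ph^-$ completely explicitly via the divisor $\ph_y(x_1)=\pm t_6^3$ on the cubic model $C_{t_6}$ and computes the height of the resulting section to be $2\deg\ph$. Your overall skeleton --- inject $\Hom(E_1,E_2)$ into the Mordell--Weil group, match the pairings, and force surjectivity by a discriminant count --- is the standard one and your endgame is essentially right: since the trivial lattice $U\oplus E_8(-1)^2$ of the Inose fibration is unimodular and $\MW(F^{(1)})$ is torsion-free, $\det\MW(F^{(1)})=\pm\det T(\Ino)=\pm\det T(E_1\times E_2)$. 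But watch the bookkeeping here: $\det T(E_1\times E_2)=\pm 2^h\det(\Hom(E_1,E_2),\deg)$, not $\det(\Hom,\deg)$, which is consistent only with the section $P_\ph$ having height $2\deg\ph$ (i.e.\ with the target being $\Hom\langle 2\rangle$ in the normalization of Section \ref{sec:F1andF2}); as written, your discriminant count would force the wrong scalar.

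The genuine gap is in the middle step. You argue that the pairings must agree up to a scalar ``since the Shioda--Inose structure induces a Hodge-isometry on transcendental lattices and hence an isometry on the orthogonal complements.'' This is a non sequitur: the isometry $T(E_1\times E_2)\cong T(\Ino)$ tells you nothing about how the correspondence acts on algebraic classes (it only constrains discriminant forms, which determine a positive definite lattice of rank $\le 4$ only up to genus, and in any case a ``natural'' isomorphism requires the specific map). The maps $\pi_0,\pi_1$ are degree-$2$ rational maps, so $\pi_{1*}\pi_1^{*}$ and $\pi_{0*}$ rescale intersection numbers and, without controlling the exceptional curves of the resolutions, need not carry integral classes to integral classes; a priori the transported graph class could land in $\frac12\NS$ or have its norm multiplied by $2$ or $4$. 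Pinning down the scalar ``in one test case'' is only legitimate after you have shown the map is a fixed multiple of an isometry, which is precisely the point at issue. This is exactly the difficulty that the explicit construction of Proposition \ref{prop:sectionsfromisogeny} (and Shioda's argument in \cite{Shioda:correspondence}) is designed to resolve: one writes down the section attached to $\ph$ on the cubic model, checks directly that $\ph\mapsto P_\ph^+-P_\ph^-$ is additive and defined over the right field, and computes its height to be $2d$; only then does the discriminant comparison close the argument.
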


In particular, we can compute the \MoW\ rank as follows:

\begin{proposition}\label{prop:Fn-MWrank} \cite{Shioda:2000}
For elliptic curves $E_{1}$ and $E_{2}$, and for $1\le n\le 6$, we
have
\[
\rank F^{(n)}_{E_{1},E_{2}}(\bar k(t_{n}))
= h + \min\bigl(4(n-1),16\bigr) 
- \begin{cases}
0 & \text{if $j(E_{1})\neq j(E_{2})$,}\\
n & \text{if $j(E_{1}) = j(E_{2}) \neq 0, 1728$,}\\
2n & \text{if $j(E_{1}) = j(E_{2}) =0$ or $1728$,}
\end{cases}
\]
where $h=\rank \Hom(E_{1},E_{2})$.  
\end{proposition}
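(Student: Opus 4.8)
The plan is to combine the Shioda--Tate formula with Theorem~\ref{structure-theorem1-shioda}. Since $T(F^{(n)}) \cong T(F^{(1)})\<n\>$ is just a rescaling of the pairing, its rank is independent of $n$; comparing determinants, the exponent $\lambda = 4-h$ appearing in Theorem~\ref{structure-theorem1-shioda} must equal $\rank T(F^{(1)})$, so $\rank T(F^{(n)}) = 4-h$ for every $n$. As each $F^{(n)}$ is a K3 surface, its geometric Picard number is $\rho = 22 - (4-h) = 18 + h$, independent of $n$. Shioda--Tate then gives
\[
\rank F^{(n)}_{E_1,E_2}(\kbar(t_n)) = \rho - 2 - \sum_v (m_v - 1) = 16 + h - \sum_v (m_v - 1),
\]
where $m_v$ is the number of components of the fiber over $v$; torsion in the \MoW\ group does not affect the rank, so the torsion recorded in Theorem~\ref{structure-theorem1-shioda} is irrelevant here. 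It therefore suffices to compute $\sum_v(m_v-1)$ in each case; as a consistency check, Theorem~\ref{structure-theorem2-shioda} confirms the base value $\rank F^{(1)} = h$, matching the $2\II^*$ configuration of $F^{(1)}$ via $18+h-2-16 = h$.

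In the generic case $j(E_1)\neq j(E_2)$ the singular fibers are read off from the table. The reducible fibers at $t=0,\infty$ contribute $2\cdot 8,\,2\cdot 6,\,2\cdot 4,\,2\cdot 2,\,0,\,0$ for $n=1,\dots,6$, i.e. exactly $16 - \min(4(n-1),16)$, while the $4n$ fibers of type $\I_1$ are irreducible and contribute nothing. Substituting gives $\rank = 16 + h - \bigl(16 - \min(4(n-1),16)\bigr) = h + \min(4(n-1),16)$, as claimed.

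For the remaining cases I would analyze the discriminant directly. Writing $J_i = j(E_i)/1728$ and using the normalized model, $\Delta(t)$ is proportional to
\[
\bigl(t^n + t^{-n} - 2\beta_+\bigr)\bigl(t^n + t^{-n} - 2\beta_-\bigr), \qquad \beta_\pm = \sqrt{(1-J_1)(1-J_2)} \pm \sqrt{J_1 J_2},
\]
whose $4n$ simple zeros away from $t=0,\infty$ are the generic $\I_1$ fibers. When $j(E_1)=j(E_2)$, i.e. $J_1=J_2=J$, one finds $\beta_+ = 1$, so the first factor becomes $t^{-n}(t^n-1)^2$ and acquires a double zero at each of the $n$ roots of unity. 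If moreover $J\neq 0,1$ (that is $j\neq 0,1728$), the coefficient $-3\sqrt[3]{J_1J_2}$ of $X$ is nonzero, so these coalescing fibers are multiplicative of type $\I_2$, each contributing $1$; this adds $n$ and yields $\rank = h + \min(4(n-1),16) - n$. If $J=1$ ($j=1728$) the coefficient $-3\sqrt[3]{J_1J_2}=-3$ is still nonzero, but now also $\beta_- = -1$, so the second factor becomes $t^{-n}(t^n+1)^2$ and all $2n$ points with $t^{2n}=1$ carry $\I_2$ fibers, adding $2n$. If $J=0$ ($j=0$) the coefficient of $X$ vanishes identically, the model is $Y^2 = X^3 + B$ with $B$ vanishing to order $2$ at each $n$-th root of unity, so these fibers are additive of type $\IV$, each contributing $2$, again adding $2n$. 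In both cases $\rank = h + \min(4(n-1),16) - 2n$, completing the formula.

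The main obstacle is the Kodaira-type bookkeeping at the degenerate fibers: one must verify via Tate's algorithm that the coalescing fibers are exactly $\I_2$ in the multiplicative cases and exactly $\IV$ when $j=0$, rather than some more degenerate type, and --- crucially --- that the fibers at $t=0,\infty$ remain as in the generic table even when $J\in\{0,1\}$ forces a global change in the Weierstrass data. This last point I would settle by explicit minimalization, scaling $X,Y$ by suitable powers of $t$ near $t=0,\infty$, which shows the local type there depends only on $n$ (for example, for $n=1$, $j=0$ one recovers $\II^*$ from $Y^2 = X^3 + t^5(t-1)^2$); the constancy $\rho = 18+h$ then provides an independent check on the total $\sum_v(m_v-1)$.
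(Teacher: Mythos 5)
The paper offers no proof of this proposition at all --- it is quoted from Shioda's \emph{K3 surfaces and sphere packings} paper --- so there is nothing internal to compare against; your argument is the standard one and it is essentially sound. The chain $\lambda=\rank T(F^{(1)})=4-h$ (from the determinant of a rescaled lattice), hence $\rho(F^{(n)})=18+h$ for all $n$, followed by Shioda--Tate and a fiber-by-fiber count, is exactly how this formula is obtained in the literature, and your bookkeeping checks out: the reducible fibers at $t=0,\infty$ contribute $16-\min(4(n-1),16)$ in every case, the factorization $4A^3+27B^2=27\bigl(B-2\sqrt{J_1J_2}\bigr)\bigl(B+2\sqrt{J_1J_2}\bigr)$ gives your $\beta_\pm$, and the degenerations you identify ($n$ or $2n$ nodal fibers of type $\I_2$ when $A\neq0$, type $\IV$ fibers when $J=0$) have the right component counts to produce the corrections $n$ and $2n$. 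One point of incompleteness: the first case of the proposition, $j(E_1)\neq j(E_2)$, includes the sub-case where exactly one $j$-invariant is $0$, which the table you read the fibers from explicitly excludes (it assumes $j(E_i)\neq 0$ for both $i$). Your own discriminant analysis covers it --- with $J_1=0\neq J_2$ one gets $A=0$ and $\Delta\propto B^2$ with $2n$ double zeros carrying type $\II$ fibers, which contribute nothing to the trivial lattice, so the generic formula persists --- but you should say so explicitly rather than leaning on the table for all of case 1. The remaining caveats you flag yourself (confirming $\I_2$ versus $\III$ via the nodal-versus-cuspidal criterion, and checking that the fibers at $t=0,\infty$ are unchanged when $J\in\{0,1\}$) are genuine obligations, and your sketched minimalization $Y^2=X^3+t^{6k-n}(t^n-1)^2$ discharges the latter correctly.
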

In particular, the largest possible Mordell-Weil rank is $18$, and we
have the following.

\begin{proposition}
Let $E_{1}$ and $E_{2}$ be two elliptic curves over $k$ satisfying the
following two conditions.
\begin{enumerate}
\item $E_{1}$ and $E_{2}$ are isogenous but not isomorphic over $\kbar$.
\item $E_{1}$ and $E_{2}$ have complex multiplication.
\end{enumerate}
Then, the Mordell-Weil groups $F^{(5)}(\kbar(t_{5}))$ and
$F^{(6)}(\kbar(t_{6}))$ have rank~$18$.
\end{proposition}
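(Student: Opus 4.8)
The plan is to reduce everything to the rank formula of Proposition~\ref{prop:Fn-MWrank}, so that the proof amounts to computing the two quantities that enter that formula: the Hom-rank $h = \rank \Hom(E_1, E_2)$ and the branch of the case distinction, which is governed by the $j$-invariants.

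First I would pin down $h$. Since $E_1$ and $E_2$ have complex multiplication and $k$ is a number field (so we are in characteristic zero), each endomorphism algebra $\End_{\kbar}(E_i) \otimes \Q$ is an imaginary quadratic field, whence $\End_{\kbar}(E_i)$ is an order therein and has rank $2$ as a $\Z$-module. Fixing an isogeny $\ph \colon E_1 \to E_2$, which exists by condition (i), composition with $\ph$ identifies $\Hom(E_1, E_2) \otimes \Q$ with $\End_{\kbar}(E_1) \otimes \Q$ as a $\Q$-vector space (using that $\ph$ is invertible up to the scalar $\deg \ph$ after $\otimes \Q$), so it is two-dimensional; hence $h = 2$. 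Equivalently, isogenous CM curves share the same CM field $K$, and $\Hom(E_1, E_2) \otimes \Q$ is a free rank-one $K$-module. In particular, over a number field the general bound $h \le 4$ is never attained: the Hom-rank here is exactly $2$.

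Next I would determine which branch of the formula applies. Two elliptic curves are isomorphic over $\kbar$ precisely when they have the same $j$-invariant; since condition (i) forbids an isomorphism over $\kbar$, we must have $j(E_1) \neq j(E_2)$. We therefore land in the first (``generic'') case of Proposition~\ref{prop:Fn-MWrank}, in which the subtracted term is $0$, irrespective of whether the $j$-invariants happen to equal $0$ or $1728$.

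Finally I would substitute. With $h = 2$ and $j(E_1) \neq j(E_2)$, the formula gives $\rank F^{(5)}(\kbar(t_5)) = 2 + \min(4\cdot 4,\,16) - 0 = 2 + 16 = 18$ and $\rank F^{(6)}(\kbar(t_6)) = 2 + \min(4\cdot 5,\,16) - 0 = 2 + 16 = 18$, realizing the maximal possible value $18$. There is no serious obstacle in this argument; the only point requiring care is the identity $h = 2$, which rests on standard complex-multiplication theory (the endomorphism ring of a CM elliptic curve in characteristic zero is a rank-two order in an imaginary quadratic field) together with the elementary fact that non-isomorphic curves over $\kbar$ have distinct $j$-invariants.
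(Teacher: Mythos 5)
Your proposal is correct and follows the same route the paper intends: the proposition is stated as an immediate consequence of Proposition~\ref{prop:Fn-MWrank}, and you supply exactly the two inputs that make it go through, namely $h=\rank\Hom(E_1,E_2)=2$ for isogenous CM curves in characteristic zero and $j(E_1)\neq j(E_2)$ from the non-isomorphism hypothesis. Your added justification of $h=2$ via the identification $\Hom(E_1,E_2)\otimes\Q\cong\End(E_1)\otimes\Q$ is a correct and welcome elaboration of a step the paper leaves implicit.
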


Shioda further analyzed the surface $F^{(5)}$ for the CM elliptic
curves $y^2 = x^3 - 1$ and $y^2 = x^3 - 15x + 22$, which are
$2$-isogenous to each other, and determined its Mordell-Weil group
\cite{Shioda:2007}. For the same pair of elliptic curves, $F^{(6)}$
was studied in \cite{Chahal-Meijer-Top} and generators for its
Mordell-Weil group were computed.

In this article, we will generalize these results further, to obtain
explicit descriptions of the Mordell-Weil lattices of the surfaces
$F^{(n)}$. Our main results are the following.

\begin{theorem}
 Suppose the two elliptic curves $E_1$ and $E_2$ are not isogenous
 (over $\kbar$). 
\begin{enumerate} 
\item The field of definition of the Mordell-Weil group of $F^{(n)}$
  (i.e. the smallest field over which all the sections are defined) is
  contained in $k(E_1[n], E_2[n])$, the compositum of the $n$-torsion
  fields of $E_1$ and $E_2$.
\item An explicit basis for $\MW(F^{n})$ is described by the
  corresponding results: Proposition \ref{prop:F^2}, Theorem
  \ref{th:F^(3)}, Theorem \ref{th:F4-RES} and Theorem
  \ref{th:F4-quartic}, Theorem \ref{th:F^5}, Theorem \ref{th:F^6-RES}
  and Theorem \ref{th:F^6-cubic}.
\end{enumerate}
\end{theorem}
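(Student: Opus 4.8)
The statement has two parts of quite different character: part (ii) is a compilation, collecting the explicit bases produced one surface at a time in Sections~\ref{sec:F1andF2}--\ref{sec:F6}, so that ``proving'' it amounts to assembling those results. Part (i) is the uniform structural claim, and my plan is to deduce it from part (ii) after first recording the relevant numerics and the group-action framework that governs the whole construction.

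First I would record the numerics. Since $E_1$ and $E_2$ are non-isogenous over $\kbar$, we have $h=\rank\Hom(E_1,E_2)=0$, so Theorem~\ref{structure-theorem2-shioda} gives $F^{(1)}(\kbar(t))=0$; and because being non-isogenous forces $j(E_1)\neq j(E_2)$, Proposition~\ref{prop:Fn-MWrank} yields geometric Mordell-Weil rank $\min(4(n-1),16)$. By Theorem~\ref{structure-theorem1-shioda} this group is torsion-free in the generic range $j(E_i)\neq 0,1728$, so $\MW(F^{(n)})$ is free of the stated rank, which fixes the number of independent sections we must produce. The organizing principle is the group action emphasized in the introduction: the substitution $t_n^n=t_1$ realizes $F^{(n)}$ as the base change of the Inose surface along a degree-$n$ cover of the $t_1$-line, whose deck transformation $t_n\mapsto\zeta_n t_n$, combined with the $\SL_2(\Z/n\Z)$-action on each factor $X(n)$, produces an action of $\SL_2(\Z/n\Z)^2$ on the universal family and hence on the relative Mordell-Weil group.

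To bound the field of definition I set $L=k(E_1[n],E_2[n])$; this is the fixed field of the kernel of the mod-$n$ representation $\Gal(\kbar/k)\to\GL_2(\Z/n\Z)^2$ attached to the pair, and by the Weil pairing it contains $k(\mu_n)$, so that the roots of unity $\zeta_n$ and the entire $\SL_2(\Z/n\Z)^2$-action are already defined over $L$. The plan is then to exhibit a starting section $P$ defined over $L$, propagate it by the group action, and verify that the resulting sections, together with the contributions already visible over small fields, span $\MW(F^{(n)})$ of the correct rank. Granting part (ii), which supplies precisely such a generating set with $(X,Y)$-coordinates lying in $L(t_n)$, part (i) follows immediately: a finitely generated group each of whose generators is fixed by $\Gal(\kbar/L)$ is itself defined over $L$, so the field of definition of $\MW(F^{(n)})$ is contained in $L$.

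I expect the genuine difficulty to lie entirely in part (ii), and specifically in verifying that the $\SL_2(\Z/n\Z)^2$-orbit of a single well-chosen section spans a sublattice of the full rank $\min(4(n-1),16)$, and then in saturating that sublattice to the full Mordell-Weil lattice. A finite-index argument is not by itself sufficient for part (i), since passing to the saturation can a priori enlarge the field of definition; this is exactly why one needs the full explicit bases rather than merely a finite-index subgroup. Controlling the height pairing among the propagated sections and carrying out the saturation is where the auxiliary tools---the associated rational elliptic surfaces and the geometry of low-degree curves on the cubic and quartic models---must be brought to bear, and this has to be done separately for each value of $n$.
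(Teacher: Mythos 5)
Your proposal is correct and follows essentially the same route as the paper: part (ii) is indeed just the compilation of the section-by-section basis theorems, and part (i) is deduced exactly as in the corollaries following each of those theorems --- over $L=k(E_1[n],E_2[n])$ the curves $E_1,E_2$ become specializations $E_u,E_v$ of the universal family with level $n$ structure, the explicit generators have coordinates in $L(t_n)$ (with $\zeta_n\in L$ by the Weil pairing), and a group generated by $\Gal(\kbar/L)$-fixed sections is defined over $L$. Your remark that a finite-index sublattice would not suffice for the field-of-definition claim is also consistent with why the paper insists on full bases rather than finite-index subgroups in the non-isogenous case.
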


\begin{theorem}
 In the general case when $E_1$ and $E_2$ are allowed to be isogenous,
 there is a finite index sublattice $\MW(F^{n})$ for which all the
 sections can be defined over the compositum of $k(E_1[n], E_2[n])$
 and the field of definition of $\Hom(E_1, E_2)$.
\end{theorem}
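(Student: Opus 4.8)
The plan is to exhibit the required finite-index sublattice as a sum $A+B$, where $A$ is a ``generic part'' obtained by specializing the sections constructed in the non-isogenous case and $B$ is an ``isogeny part'' coming from $\Hom(E_1,E_2)$ through Shioda's correspondence. For $A$, the key observation is that each basis element produced in the generic case (Proposition~\ref{prop:F^2}, Theorem~\ref{th:F^(3)}, Theorems~\ref{th:F4-RES} and~\ref{th:F4-quartic}, Theorem~\ref{th:F^5}, and Theorems~\ref{th:F^6-RES} and~\ref{th:F^6-cubic}) is given by an explicit expression in the coefficients $a,b,c,d$ of $E_1,E_2$ together with level-$n$ data, and these expressions remain valid when $E_1,E_2$ are specialized to an isogenous pair. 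Since the sections are assembled from the level-$n$ structures, they are defined over $k(E_1[n],E_2[n])$; let $A$ denote the sublattice they generate. As long as $j(E_i)\neq 0,1728$ and $j(E_1)\neq j(E_2)$, the singular-fibre configuration of $F^{(n)}$ depends only on $n$ (see the table), so the trivial lattice and the incidences of the specialized sections with fibre components are unchanged. Consequently the height Gram matrix of $A$ agrees with that of the generic Mordell--Weil lattice, which is positive definite, and $A$ has the full generic rank $\min(4(n-1),16)$.

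For $B$, I would use Theorem~\ref{structure-theorem2-shioda}: the Galois-equivariant isomorphism $\Hom(E_1,E_2)\cong F^{(1)}(\kbar(t_1))$ shows that every homomorphism $E_1\to E_2$ defined over a field $L\supseteq k$ gives a section of $F^{(1)}$ over $L(t_1)$. Taking $L$ to be the field of definition $L_{\Hom}$ of $\Hom(E_1,E_2)$ realizes all of $F^{(1)}(\kbar(t_1))$, of rank $h$, over $L_{\Hom}$. Because $F^{(n)}$ is the base change of $F^{(1)}$ along $t_1=t_n^{\,n}$, pulling back gives an injection up to torsion $F^{(1)}(\kbar(t_1))\hookrightarrow F^{(n)}(\kbar(t_n))$, and its image $B$ is defined over $L_{\Hom}$ (adjoining $t_n$ does not enlarge the constant field). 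To see that $A$ and $B$ are independent, consider the trace map $\pi_*\colon F^{(n)}(\kbar(t_n))\to F^{(1)}(\kbar(t_1))$ along the same base change; the composite of pull-back followed by $\pi_*$ is multiplication by $n$, so $\pi_*$ is injective up to torsion on $B$ and $\pi_*(B)$ has rank $h$. On the other hand, in the truly generic case $F^{(1)}(\kbar(t_1))=0$, so the trace vanishes identically on the generic sections as functions of the parameters; this identity persists after specialization, giving $A\subseteq\Ker\pi_*$ up to torsion. Hence $A\cap B$ is torsion, $\rank(A+B)=\min(4(n-1),16)+h$ equals the full Mordell--Weil rank by Proposition~\ref{prop:Fn-MWrank}, and $A+B$ is therefore a finite-index sublattice defined over the compositum of $k(E_1[n],E_2[n])$ and $L_{\Hom}$.

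The main obstacle is controlling the rank of the specialized part $A$, since specialization can in principle make independent sections dependent. I expect to handle this via the equisingularity argument above---constancy of the fibre configuration forces constancy of the height pairing---supplemented, where needed, by Shioda's specialization theorem, which gives injectivity of the specialization homomorphism outside a thin set; for the explicit arithmetic examples one can simply check that the height matrix of $A$ is nonsingular. Additional care is required precisely in the degenerate cases $j(E_i)\in\{0,1728\}$ or $j(E_1)=j(E_2)$, where the fibre types and the torsion in $\MW(F^{(n)})$ change (cf. Theorem~\ref{structure-theorem1-shioda} and Proposition~\ref{prop:Fn-MWrank}) and the generic basis and rank count must be adjusted; the two-part structure $A+B$ and its field of definition nevertheless persist.
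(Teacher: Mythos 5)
Your proposal is correct and follows essentially the same route as the paper, which never isolates a formal proof of this statement but justifies it exactly as you do: specialize the explicit generic bases (defined over $k(E_1[n],E_2[n])$ by the cited results) and adjoin the rank-$h$ sublattice coming from $\Hom(E_1,E_2)$ via Shioda's correspondence and Proposition~\ref{prop:sectionsfromisogeny}, then count ranks against Proposition~\ref{prop:Fn-MWrank} (this is precisely the strategy announced at the start of Section~\ref{sec:examples}). Your trace-map argument for the independence of the two pieces is a detail the paper leaves implicit, and is a sound way to close that gap.
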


\begin{remark}
It is possible that the field of definition of $\MW(F^{n})$ in the
general case coincides with the above compositum. However, we have not
generated sufficient numerical evidence to formally state this as a
conjecture.
\end{remark}

\subsection{Galois correspondence of sublattices}
The Mordell-Weil lattice of the surface $F^{(6)}$ has a particularly
rich structure, with sublattices induced from the Mordell-Weil
lattices of several quotients which are elliptic rational or K3
surfaces. As we saw in \eqref{D6-action}, a dihedral group $D_{6}$
generated by $\sigma$ and $\tau$ acts on $F^{(6)}$.  We define
\begin{equation}\label{sni}
s_{n,i}=t_{n}+\frac{(\zeta_{6}^{-i}\delta)^{6/n}}{t_{n}}
\quad  n=1,2,3,6, \ i=0,1,\dots,n-1.
\end{equation}
Recall that $t_n^{n}=t_{1} = t_6^{6}$. Then $s_{n,i}$ is invariant
under $\sigma^{n}$ and $\tau\sigma^{i}$.  Write
$S=s_{1,0}=t_{1}+(\Delta_{E_{2}}/\Delta_{E_{1}})t_{1}^{-1}$ for
simplicity.  Then, the extension $k(t_{6})/k(S)$ is a Galois
extension, and its Galois group is $D_{6}=\<\sigma,\tau\>$.  Our basic
idea is to consider the elliptic surface
\[
F_{S}:Y^{2} = X^3 - \frac{1}{3} A\,X  + \frac{1}{64} (\Delta_{E_{1}} S + C),
\]
where $A$ and $C$ are as in \eqref{ABCD}, and view $F^{(6)}(\bar
k(t_{6}))$ as the Mordell-Weil group of $F_{S}$ over the extension
$\bar k(t_{6})/\bar k(S)$.  In other words, we regard $F^{(6)}(\bar
k(t_{6}))=F_{S}(\bar k(t_{6}))$.

Write $M(t_{n})=F_{S}(\bar k(t_{n}))$ and $M(s_{n,i})=F_{S}(\bar
k(s_{n,i}))$.  Between $k(t_{6})$ and $k(S)$ there are fourteen
intermediate fields. Corresponding to these we have a relation among
the Mordell-Weil groups $M(t_{n})$ and $M(s_{n,i})$.
\[
\includegraphics[scale=1.05]{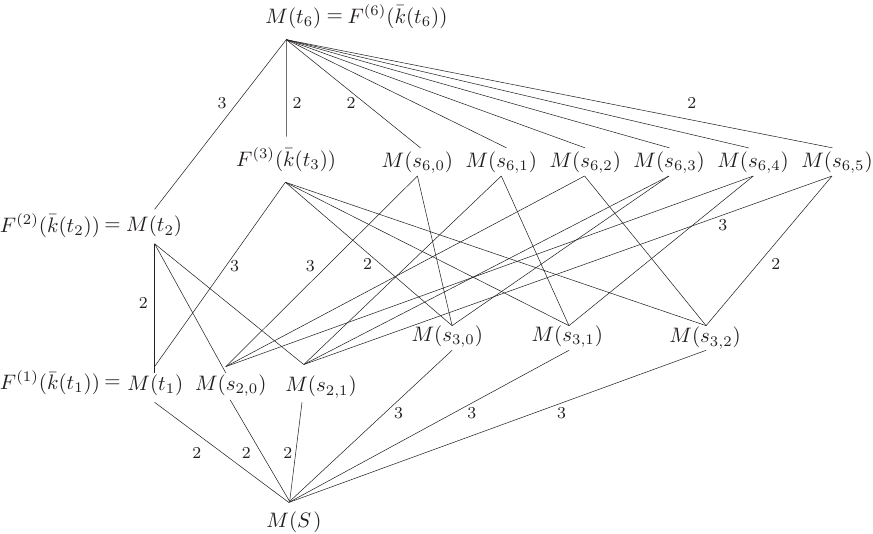}
\]

For later use, let us write a more general formula for $F^{(n)}$.  If
$E_{1}$ and $E_{2}$ are given by
\begin{align*}
&E_1 : y^2=x^3+a_{2}x^{2}+a_{4}x+a_{6},\\
&E_2 : y^2=x^3+a'_{2}x^{2}+a'_{4}x+a'_{6},
\end{align*}
then the equation of $F^{(n)}_{E_{1},E_{2}}$ is given by
\[
F^{(n)}_{E_{1},E_{2}} : Y^2 = X^3 - \frac{1}{3} A\,X 
+\frac{1}{64} \left( B\,t_{n}^{n} + C + \frac{D}{t_{n}^{n}} \right),
\]
where
\begin{equation}\label{ABCD}
\left\{
\begin{aligned}
A &= (a_{2}^2-3a_{4})({a'_{2}}^2-3a'_{4}), \\
B &= 16
(a_{2}^2a_{4}^2-4a_{2}^3a_{6}+18a_{2}a_{4}a_{6}-4a_{4}^3-27a_{6}^2)
=\Delta_{E_{1}}, \\
C &= \frac{32}{27}(2a_{2}^3-9a_{2}a_{4}+27a_{6})(2{a'_{2}}^3-9a'_{2}a'_{4}+27a'_{6}), \\
D &=16({a'_{2}}^2{a'_{4}}^2-4{a'_{2}}^3a'_{6}+18a'_{2}a'_{4}a'_{6}
-4{a'_{4}}^3-27{a'_{6}}^2)
=\Delta_{E_{2}}.
\end{aligned}
\right.
\end{equation}

\section{The Mordell-Weil groups of $F^{(1)}$ and $F^{(2)}$} 
\label{sec:F1andF2}

In this section we summarize the description of the Mordell-Weil
lattices $F^{(1)}(\kbar(t_{1}))$ and $F^{(2)}(\kbar(t_{2}))$ for
completeness.

\subsection{$F^{(1)}$ for isogenous case}
The Mordell-Weil lattice of $F^{(1)}(\kbar (t_{1}))$ for generic
$E_{1}$ and $E_{2}$ is trivial, and we have nothing to do.  If $E_{1}$
and $E_{2}$ are isogenous but not isomorphic over $\kbar$, we have the
following interpretation of the Mordell-Weil lattice by
$\Hom(E_{1},E_{2})$.

\begin{proposition}[Shioda~\cite{Shioda:correspondence}]
Let $E_{1}$ and $E_{2}$ be two elliptic curves not isomorphic to each
other over~$\kbar$. Then, the Mordell-Weil lattice of
$F^{(1)}(\kbar (t_{1}))$ is isomorphic to the lattice
$\Hom(E_{1},E_{2})\<2\>$, where the pairing of $\Hom(E_{1},E_{2})$ is
given by
\[
(\ph,\psi)=\frac{1}{2}\bigl(\deg(\ph+\psi)-\deg\ph-\deg\psi\bigr)
\quad \ph, \psi\in\Hom(E_{1},E_{2}).
\]
\end{proposition}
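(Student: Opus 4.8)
The plan is to combine Shioda's Theorem~\ref{structure-theorem2-shioda}, which gives a natural lattice isomorphism $\Hom(E_1,E_2)\cong F^{(1)}(k(t_1))$, with the relation between the canonical height pairing on the Mordell-Weil lattice and the degree pairing on homomorphisms. The key point is to identify the precise scaling factor between the two pairings, which is what produces the $\langle 2\rangle$.

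First I would recall the height pairing on the Mordell-Weil lattice. For an elliptic K3 surface with $\chi(\sO)=2$, the narrow Mordell-Weil lattice pairing $\langle P,Q\rangle$ is computed by the Shioda height formula
\[
\langle P,Q\rangle = 2\chi + (P\cdot O) + (Q\cdot O) - (P\cdot Q) - \sum_v \mathrm{contr}_v(P,Q),
\]
with the correction terms coming from the reducible fibers. From the table, $F^{(1)}$ has singular fibers $2\,\II^*,4\,\I_1$; the two fibers of type $\II^*$ are irreducible (type $E_8$ contributes nothing to the correction terms since $\II^*$ has no nontrivial component group), so the local contributions vanish and the height pairing simplifies considerably. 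I would pin down the exact normalization so that the Gram matrix of $F^{(1)}(k(t_1))$ is computed purely from intersection numbers.

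Next I would make Shioda's correspondence explicit on the level of pairings. The natural map sends $\ph\in\Hom(E_1,E_2)$ to a section $P_\ph$, and one needs to verify that the height pairing $\langle P_\ph,P_\psi\rangle$ equals $\deg(\ph+\psi)-\deg\ph-\deg\psi$, i.e. twice the symmetrized degree form $(\ph,\psi)$ defined in the statement. The degree pairing on $\Hom(E_1,E_2)$ arises because $\deg$ is a positive-definite quadratic form, and its associated bilinear form is exactly the $(\ph,\psi)$ written above. I would track the factor of $2$ through the height computation: the correspondence naturally matches $\langle P_\ph, P_\psi\rangle$ with $2(\ph,\psi)$, which is precisely the assertion that the Mordell-Weil lattice is $\Hom(E_1,E_2)\langle 2\rangle$.

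The main obstacle will be verifying that no fibral correction terms or extra factors appear, i.e. that the geometric comparison of the two pairings yields the clean factor $2$ rather than some other constant. This amounts to a careful bookkeeping of the Shioda-Inose structure: the double cover relating $\Km(E_1\times E_2)$ and $\Ino(E_1,E_2)$ multiplies the transcendental pairing, and by the orthogonality of $T$ and $\MW$ in $\NS$, one must confirm that the induced scaling on the Mordell-Weil side is exactly $2$. Since $E_1$ and $E_2$ are assumed not isomorphic over $\kbar$, the fibration genuinely has the stated configuration and $\Hom(E_1,E_2)$ is torsion-free, so the isomorphism is an isomorphism of lattices (not merely of groups modulo torsion), completing the argument.
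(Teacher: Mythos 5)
The paper does not actually prove this proposition: it is quoted verbatim from Shioda \cite{Shioda:correspondence}, and the only related argument the paper supplies is Proposition \ref{prop:sectionsfromisogeny}, which constructs the section attached to an isogeny explicitly (as the divisor class of $D^{+}_{\ph}-D^{-}_{\ph}$ on the cubic model $C_{t_6}$, descended from $F^{(6)}$ to $F^{(1)}$) and then defers the height computation to Shioda. Your overall strategy --- combine the isogeny--section correspondence with the explicit height formula, using that the two $\II^*$ fibers contribute no correction terms --- is indeed how this is proved in the literature, so the plan is pointed in the right direction.

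However, as written there is a genuine gap: the one piece of content in the statement beyond Theorem \ref{structure-theorem2-shioda} is the normalization, i.e.\ that the section $P_\ph$ attached to a degree-$d$ isogeny has height exactly $2d$, and your proposal never establishes this. You say you ``would track the factor of $2$'' and that ``one must confirm that the induced scaling \ldots is exactly $2$,'' but without an explicit description of $P_\ph$ (its intersection numbers $(P_\ph\cdot O)$ and $(P_\ph\cdot P_\psi)$, or equivalently the degrees of the divisors $D^{\pm}_\ph$ as in Proposition \ref{prop:sectionsfromisogeny}) there is nothing to compute the height of, and the factor of $2$ cannot be extracted from the Shioda--Inose transcendental-lattice comparison alone, since that only controls $\NS$ up to finite index. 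Two further issues: your height formula is misquoted --- the constant in the bilinear version is $\chi$, not $2\chi$ (they agree only on the diagonal, where $(P\cdot P)=-\chi$) --- and invoking Theorem \ref{structure-theorem2-shioda} as the starting point is circular in normalization, since that theorem already asserts a lattice isomorphism $\Hom(E_1,E_2)\cong F^{(1)}(k(t))$ under a different (unspecified) scaling of the degree form; you would need to fix one convention and prove the Gram-matrix identity $\langle P_\ph,P_\psi\rangle=\deg(\ph+\psi)-\deg\ph-\deg\psi$ directly.
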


For a given $\ph\in \Hom(E_{1},E_{2})$, we would like to compute the
section corresponding to $\ph$ explicitly.  To do so, we consider the
inclusion
\[
\Hom(E_{1},E_{2})\<2\>
\isom F^{(1)}_{E_{1},E_{2}}(\bar k(t_{1}))
\hookrightarrow 
\Hom(E_{1},E_{2})\<12\> \subset F^{(6)}_{E_{1},E_{2}}(\bar k(t_{6}))
\]
induced by $t_{1}\mapsto t_{6}^{6}$, and we look for a section in
$F^{(6)}_{E_{1},E_{2}}(\bar k(t_{6}))$. 

Suppose $E_{1}$ and $E_{2}$ are given in the form of \eqref{ell}.  By
replacing $t_{2}$ by $t_{6}^{3}$ in the equation of Kummer surface
\eqref{Kummer}, we regard it as a cubic curve over $k(t_{6})$.  More
precisely, we consider the cubic curve in the projective plane over
$k(t_{6})$ with coordinates $(x_{1}:x_{2}:z)$ defined by
\begin{equation}\label{eq:C_t6}
C_{t_{6}}:x_{2}^{3}+cx_{2}z^{2}+dz^{3}
=t_{6}^{6}\bigl(x_{1}^{3}+ax_{1}z^2+bz^{3}\bigr).
\end{equation}
Suppose $\ph$ is an isogeny of degree~$d$.  Then, $\ph$ can be written
in the form
\[
\ph:(x_{1},y_{1})\mapsto (x_{2},y_{2}) =\bigl(\ph_{x}(x_{1}), \ph_{y}(x_{1})y_{1}\bigr).
\]
Consider the curve of degree $d$ given by $x_{2}=\ph_{x}(x_{1})$.  The
intersection of these two curves
\begin{equation}\label{intersection}
\left\{
\begin{array}{l}
x_{2}^{3} + c x_{2} + d = t_{6}^{6}(x_{1}^{3} + a x_{1} + b), \\
x_{2} = \ph_{x}(x_{1}),
\end{array}
\right.
\end{equation}
gives a divisor of degree $3d$ in $C_{t_{6}}$. 
Since we have 
\(
\ph_{x}(x_{1})^{3} + c \ph_{x}(x_{1}) + d = \ph_{y}(x_{1})^{2}y_{1}^{2}= \ph_{y}(x_{1})^{2}(x_{1}^{3} + a x_{1} + b),
\)
the first equation reduces to
\begin{equation}\label{eq:deg=3d}
\bigl(\ph_{y}(x_{1})-t_{6}^{3}\bigr)\bigl(\ph_{y}(x_{1})+t_{6}^{3}\bigr)(x_{1}^{3} + a x_{1} + b)=0.
\end{equation}

\begin{proposition} \label{prop:sectionsfromisogeny}
Let $\ph:E_{1}\to E_{2}$ be an isogeny of degree~$d$ defined over~$k$.
Let $D_{\ph}^{+}$ \textup{(}resp. $D_{\ph}^{-}$\textup{)} be the
divisor on the cubic curve \eqref{eq:C_t6} defined by the equation
$\ph_{y}(x_{1})=t_{6}^{3}$ \textup{(}resp. $\ph_{y}(x_{1})=
-t_{6}^{3}$\textup{)}.
\begin{enumerate}
\item The divisor $D_{\ph}^{+}$
  \textup{(}resp. $D_{\ph}^{-}$\textup{)} determines a
  $k(t_{6})$-rational point $P^{+}_{\ph}$
  \textup{(}resp. $P^{-}_{\ph}$\textup{)} in $F^{(6)}(k(t_{6}))$.
\item $P^{+}_{\ph}-P^{-}_{\ph}$ is in the image of
  $F^{(1)}(k(t_{1}))\to F^{(6)}(k(t_{6}))$.  The height of its
  pre-image in $F^{(1)}(k(t_{1}))$ is $2d$.
\end{enumerate}
\end{proposition}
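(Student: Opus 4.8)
The plan is to realize each $P_\varphi^\pm$ as a divisor class on the cubic model $C_{t_6}$ of \eqref{eq:C_t6} and then transport it to the Weierstrass model $F^{(6)}$ via the change of variables \eqref{change-of-var}. First I would record that $D_\varphi^+$, $D_\varphi^-$ and the $2$-torsion divisor $D_0=\{x_1^3+ax_1+b=0\}$ are precisely the three pieces cut out on $C_{t_6}$ by the degree-$d$ plane curve $x_2=\varphi_x(x_1)$, as exhibited by the factorization \eqref{eq:deg=3d}. Since $\varphi$, and hence $\varphi_x,\varphi_y$, are defined over $k$ and $t_6^3\in k(t_6)$, the divisors $D_\varphi^\pm$ are stable under $\Gal(\overline{k(t_6)}/k(t_6))$. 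Moreover the involution $\sigma^3\colon t_6\mapsto -t_6$ fixes the equation of $C_{t_6}$ and sends $t_6^3\mapsto -t_6^3$, hence carries $D_\varphi^+$ isomorphically onto $D_\varphi^-$; in particular $\deg D_\varphi^+=\deg D_\varphi^-$. Writing $O=(1:t_6^2:0)$ for the origin and $\iota\colon C_{t_6}\xrightarrow{\sim}F^{(6)}$ for the isomorphism of \eqref{change-of-var}, I would set $P_\varphi^\pm:=\iota_*[D_\varphi^\pm-(\deg D_\varphi^\pm)\,O]\in F^{(6)}(\overline{k(t_6)})$, equivalently the group-law sum of the points of $D_\varphi^\pm$. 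As $\iota$, $O$ and $D_\varphi^\pm$ are all defined over $k(t_6)$, these classes are fixed by $\Gal(\overline{k(t_6)}/k(t_6))$ and so lie in $F^{(6)}(k(t_6))$, giving part (1).

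For part (2) I would show that $\Delta:=P_\varphi^+-P_\varphi^-$ is invariant under $\Gal(\overline{k(t_6)}/\overline{k(t_1)})=\langle\sigma\rangle$, with $\sigma\colon t_6\mapsto\zeta_6 t_6$; since $F^{(6)}$ is the base change of $F^{(1)}$ along $t_1=t_6^6$, the image of $F^{(1)}(\overline{k(t_1)})$ is exactly the $\langle\sigma\rangle$-fixed part, and Galois descent then produces a preimage $Q\in F^{(1)}(k(t_1))$. The delicate point, and the main obstacle, is that $\iota$ is \emph{not} defined over $\overline{k(t_1)}$: the cubic $C_{t_6}$ and the Weierstrass model are twists of one another over $\overline{k(t_1)}$, so the $\sigma$-action transported through $\iota$ differs from the naive Galois action on $\Pic^0(C_{t_6})$ by the automorphism $\iota^\sigma\circ\iota^{-1}$. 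Indeed, on $\Pic^0(C_{t_6})$ one gets $\sigma[D_\varphi^+-D_\varphi^-]=-[D_\varphi^+-D_\varphi^-]$ because $\sigma$ swaps $D_\varphi^+\leftrightarrow D_\varphi^-$, which is the \emph{wrong} sign; the twist corrects it. Concretely, I would read off from \eqref{change-of-var} that under $t_6\mapsto -t_6$ the $X$-coordinate is unchanged while the $Y$-coordinate changes sign, i.e.\ $\iota^{\sigma^3}=[-1]\circ\iota$. Since the ``automorphism part'' $\epsilon\colon\langle\sigma\rangle\to\{\pm1\}$ of the twisting cocycle is a homomorphism with $\epsilon(\sigma^3)=-1$, it follows that $\epsilon(\sigma)=-1$. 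Combining $\epsilon(\sigma)=-1$ with the swap $\sigma(D_\varphi^+)=D_\varphi^-$ — and noting that the translation ambiguity arising from $\sigma(O)\neq O$ enters $P_\varphi^+$ and $P_\varphi^-$ with the same coefficient $\deg D_\varphi^\pm$, hence cancels in the difference — yields $\sigma(\Delta)=\Delta$, as required; the same bookkeeping gives the clean relation $\sigma^3(P_\varphi^+)=-P_\varphi^-$.

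Finally, for the height I would compute $\hat{h}_{F^{(6)}}(\Delta)$ and divide by the degree of the base change. Because $T(F^{(6)})\cong T(F^{(1)})\langle 6\rangle$ (Theorem~\ref{structure-theorem1-shioda}), the inclusion $F^{(1)}(\overline{k(t_1)})\hookrightarrow F^{(6)}(\overline{k(t_6)})$ multiplies the height pairing by $6$, so the assertion $\hat{h}_{F^{(1)}}(Q)=2d$ is equivalent to $\hat{h}_{F^{(6)}}(\Delta)=12d$. I would obtain this either by a direct computation with Shioda's height formula — using $\sigma^3(P_\varphi^+)=-P_\varphi^-$ to reduce to $\hat{h}(\Delta)=2\bigl(\hat{h}(P_\varphi^+)-\langle P_\varphi^+,P_\varphi^-\rangle\bigr)$ and evaluating the intersections with the zero section and the bad fibers in terms of $d$ — or, more cleanly, by identifying $Q$ with $\varphi$ under Shioda's isomorphism $F^{(1)}(\overline{k(t_1)})\cong\Hom(E_1,E_2)\langle 2\rangle$ and reading the norm $\langle\varphi,\varphi\rangle=2\deg\varphi=2d$ directly off the given pairing. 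The one remaining thing to verify carefully is the normalization: that the construction returns $\varphi$ itself rather than a proper multiple, so that the height is exactly $2d$.
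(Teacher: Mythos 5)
Your argument is correct and follows essentially the same route as the paper: part (i) via the identification of $F^{(6)}(k(t_6))$ with $\Pic^0$ of the cubic $C_{t_6}$, and part (ii) by observing that the Galois action of $\sigma$ transported through the change of variables \eqref{change-of-var} acquires an extra $Y\mapsto -Y$ (you derive $\epsilon(\sigma)=-1$ from $\epsilon(\sigma^3)=-1$ via the cocycle-homomorphism property, where the paper verifies the same sign by direct calculation), which exactly compensates the swap $D_\ph^+\leftrightarrow D_\ph^-$. For the height, the paper likewise does not compute it from scratch but defers to Shioda's calculation (\cite[Proposition~3.1]{Shioda:correspondence}), so your flagged normalization issue is handled there rather than in this proof.
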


\begin{proof}
(i) If $d$ is odd, the denominator of $\ph_{y}(x_{1})$ and $x_{1}^{3}
  + a x_{1} + b$ are relatively prime.  So, the degree of
  $\ph_{y}(x_{1})=\pm t_{6}^{3}$ in $x_{1}$ equals $(3d-3)/2$.  If $d$
  is even, a cancellation occurs between the denominator of
  $\ph_{y}(x_{1})$ and $x_{1}^{3} + a x_{1} + b$ at the $x_{1}$
  coordinate of one of the $2$-torsion points of $E_{1}$.  So, the
  degree of $\ph_{y}(x_{1})=\pm t_{6}^{3}$ equals $(3d-2)/2$.  In any
  case, let $r$ be the degree of $\ph_{y}(x_{1})=\pm t_{6}^{3}$.

Write $D^{+}_{\ph}=Q^{+}_{1}+\cdots+Q^{+}_{r}$.  (Note that the
$Q_i^+$ are defined over an algebraic closure of $\bar k(t_6)$.)
Recall that we chose $(x_{1}:x_{2}:z)=(1:t_{6}^{2}:0)$ as the origin
$O$ of the group law on $C_{t_{6}}$.  We identify
$F_{E_{1},E_{2}}^{(6)}(\overline{k(t_{6})})$ with the divisor class
group $\Pic^{0}_{\overline{k(t_{6})}}(C_{t_{6}})$ by the usual map
which associates a section with its generic fiber minus $O$.  Let
$Q_{\ph}^{+}$ be the point in $C_{t_{6}}$ such that
$D^{+}_{\ph}-rO\sim Q_{\ph}^{+}-O$.  Since $D_{\ph}^{+}$ and $O$ are
defined over $k(t_{6})$, so is $Q_{\ph}^{+}$.  Thus, we have a point
$P_{\ph}^{+}=[Q_{\ph}^{+}-O]\in
\Pic^{0}_{k(t_{6})}(C_{t_{6}})=F_{E_{1},E_{2}}^{(6)}(k(t_{6}))$.
Similarly, we obtain $P_{\ph}^{-}\in F_{E_{1},E_{2}}^{(6)}(k(t_{6}))$
from $D^{-}_{\ph}$.

(ii) By definition, the surface $F_{E_{1},E_{2}}^{(1)}$ is obtained as
the quotient of $F_{E_{1},E_{2}}^{(6)}$ by the action
$(X,Y,t_{6})\mapsto (X,Y,\zeta_{6} t_{6})$ on \eqref{eq:F6}, where
$\zeta_{6}$ is a sixth root of unity.  However, the action
$\sigma:\bigl((x_{1}:x_{2}:z),t_{6}\bigr)\mapsto
\bigl((x_{1}:x_{2}:z),\zeta_{6} t_{6}\bigr)$ on $C_{t_{6}}$ does not
correspond to $(X,Y,t_{6})\mapsto (X,Y,\zeta_{6} t_{6})$ since the
quotient of the former gives a rational surface.  As a matter of fact,
calculations show that the action $\sigma$ corresponds to the action
$(X,Y,t_{6})\mapsto (X,-Y,\zeta_{6} t_{6})$.

By construction, the involution $\sigma^{3}$ interchanges between the
points $Q_{\ph}^{+}$ and $Q_{\ph}^{-}$ in $C_{t_{6}}$. Thus, the
corresponding involution $(X,Y,t_{6})\mapsto (X,-Y,-t_{6})$ on
$F_{E_{1},E_{2}}^{(6)}$ sends $P_{\varphi}^{+}$ to $P_{\varphi}^{-}$.
This implies that $P_{\varphi}^{+}-P_{\varphi}^{-}$ is invariant under
the involution $(X,Y,t_{6})\mapsto (X,Y,-t_{6})$.  Moreover, since
$Q_{\ph}^{\pm}$ are both invariant under the automorphism $\sigma^{2}$
by construction, $P_{\varphi}^{\pm} $ are also invariant under the
corresponding action. We thus conclude that
$P_{\varphi}^{+}-P_{\varphi}^{-}$ is invariant under
$(X,Y,t_{6})\mapsto (X,Y,\zeta_{6} t_{6})$, and it belongs to the
image of $F^{(1)}_{E_{1}, E_{2}}(k(t_{6}))$ under the map
$t_{1}\mapsto t_{6}^{6}$.

It remains to calculate the height of this point, but the calculation
is essentially the same as in
\cite[Proposition~3.1]{Shioda:correspondence}.
\end{proof}

To compute $P_{\ph}^{\pm}$ explicitly, we need to find a curve in the
plane passing through the points in the divisor $D^{\pm}_{\ph}$, and
this is in principle just an exercise in linear algebra.  We shall
illustrate it using a concrete example (see Example~\ref{eg:disc-28}).

\subsection{$F^{(2)}$ for generic case}
Suppose two elliptic cuves are given by 
\begin{equation}\label{legendre}
\begin{aligned}
E_{\lambda}: y^{2} = x(x - 1)(x - \lambda), \\
E_{\mu}: y^{2} = x(x - 1)(x - \mu)
\end{aligned}
\end{equation}
for $\lambda, \mu \in \kbar$. Then, $F^{(2)}_{E_{\lambda},E_{\mu}}$ is
given by the Weierstrass equation
\begin{equation}\label{eq:F^2}
\begin{split}
F^{(2)}_{E_{\lambda},E_{\mu}} : Y^{2} & = X^{3} 
-\frac{1}{3} (\mu^2-\mu+1)(\lambda^2-\lambda+1)X \\
& \quad +\frac{1}{4}\lambda^2(\lambda-1)^2t^2  +\frac{\mu^2(\mu-1)^2}{4t^{2}} \\
& \quad +\frac{1}{54}(2\mu-1)(\mu-2)(\mu+1)(2\lambda-1)(\lambda-2)(\lambda+1).
\end{split}
\end{equation}
Here, we wrote $t_{2}=t$ for simplicity.  Note that in this case the
equation of Kummer surface $x_{2}(x_{2} - 1)(x_{2} - \mu)=t_{2}^2
x_{1}(x_{1} - 1)(x_{1} - \lambda)$ can be converted over $k(\lambda,
\mu)$ to the Weierstrass form $F^{(2)}$ using the point
$(x_{1},x_{2})=(0,0)$.  Then we can obtain sections from $2$-torsion
points of $E_{\lambda}$ and $E_{\mu}$.  In the following proposition,
$P_{1},\dots,P_{4}$ are obtained from
$(x_{1},x_{2})=(1,1),(\lambda,\mu),(1,\mu),(\lambda,1)$, respectively.

\begin{proposition}\label{prop:F^2}
Suppose $E_{\lambda}$ and $E_{\mu}$ are not isogenous over $\bar k$.
Then the following sections form a basis of the Mordell-Weil group
$F^{(2)}_{E_{\lambda},E_{\mu}}(\bar k(t))$
\begin{alignat*}{3}
P_{1} &= \bigg( \frac{1}{3}(-2\lambda\mu+\lambda+\mu+1), &\phantom{=} \frac{1}{2}\lambda(\lambda-1)t+\frac{\mu(\mu-1)}{2t} \bigg)  \\
P_{2} &= \bigg( \frac{1}{3}(\lambda\mu+\lambda+\mu-2), &\phantom{=} \frac{1}{2}\lambda(\lambda-1)t+\frac{\mu(\mu-1)}{2t} \bigg) \\
P_{3} &= \bigg( \frac{1}{3}(\lambda\mu-2\lambda+\mu+1), &\phantom{=} \frac{1}{2}\lambda(\lambda-1)t-\frac{\mu(\mu-1)}{2t} \bigg) \\
P_{4} &= \bigg( \frac{1}{3}(\lambda\mu+\lambda-2\mu+1), &\phantom{=} \frac{1}{2}\lambda(\lambda-1)t-\frac{\mu(\mu-1)}{2t} \bigg)
\end{alignat*}

Moreover, the height pairing of these sections is given by
\[
\frac{2}{3}\left(\begin{array}{*4r}
2 & -1 & 0 & 0 \\
-1 & 2 & 0 & 0 \\
0 & 0 & 2 & -1 \\
0 & 0 & -1 & 2
\end{array}\right).
\]
As a lattice $F^{(2)}_{E_{\lambda},E_{\mu}}(\bar k(t))\isom
A_{2}^{*}\<2\>\oplus A_{2}^{*}\<2\>$.
\end{proposition}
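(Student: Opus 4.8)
The plan is to prove the four assertions packaged in the statement in turn: that $P_1,\dots,P_4$ lie on $F^{(2)}_{E_\lambda,E_\mu}$, that their height pairing is the displayed matrix, that they generate the full Mordell--Weil group, and that this identifies the abstract lattice. By Proposition~\ref{prop:Fn-MWrank} with $h=0$ the rank is $4$, and since non-isogenous curves satisfy $j(E_\lambda)\neq j(E_\mu)$, Theorem~\ref{structure-theorem1-shioda} shows the group is torsion-free; hence $F^{(2)}(\bar k(t))\cong\Z^4$, and it suffices to exhibit four sections whose height matrix is nondegenerate and which saturate $\Z^4$. That $P_1,\dots,P_4$ are sections I would verify by direct substitution into \eqref{eq:F^2}: since each $X$-coordinate is a constant in $\lambda,\mu$, only a cubic polynomial identity needs to be checked, which I would relegate to computation.

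The substance is the height matrix, which I would compute from Shioda's formula
\[
\langle P,Q\rangle = 2 + (P\cdot O)+(Q\cdot O)-(P\cdot Q)-\sum_{v}\mathrm{contr}_v(P,Q),
\]
where $\chi(\sO)=2$ for the K3 surface and the sum runs over the two reducible fibers, both of type $\IV^*$, sitting at $t=0$ and $t=\infty$. First I would determine $(P_i\cdot O)$: each $X$-coordinate is a finite constant and each $Y_i$ has simple poles only at $t=0,\infty$, so away from those fibers $P_i$ is a finite point distinct from $O$, and at the two bad fibers it meets a non-identity component; hence $(P_i\cdot O)=0$. The crux is to decide which of the two non-identity components of each $\IV^*$ fiber the reduction of $P_i$ hits. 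I would do this by passing to the minimal Weierstrass model of $F^{(2)}$ listed in the table (with fibers $2\IV^*+8\I_1$) near $t=0$ and, via $t\mapsto 1/t$, near $t=\infty$, and reading off the resulting element of the component group $\Z/3\Z$. Feeding these into the standard local-contribution table for $\IV^*$ (self-contribution $4/3$ at a non-identity component; mutual contribution $4/3$ for the same and $2/3$ for distinct non-identity components) together with the intersection numbers $(P_i\cdot P_j)$ should reproduce the displayed Gram matrix. The block structure reflects that $P_1,P_2$ share the expansion $\tfrac12\lambda(\lambda-1)t+\tfrac{\mu(\mu-1)}{2t}$, while $P_3,P_4$ share its $t=0$ sign-change, so that a pair within a block meets matching components at both fibers while a cross pair agrees at one fiber and differs at the other.

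With the Gram matrix in hand, its determinant is $\bigl(\tfrac23\bigr)^4\cdot 9=\tfrac{16}{9}$, so $P_1,\dots,P_4$ are independent and span a finite-index sublattice. To show the index is $1$ I would compute the discriminant of the full lattice independently. By the Shioda--Tate discriminant relation, $|\det F^{(2)}(\bar k(t))| = |\det \mathrm{NS}(F^{(2)})|/|\det T_{\mathrm{triv}}|$, with trivial lattice $U\oplus E_6\oplus E_6$ of $|\det|=9$. Since $F^{(2)}$ is K3, $|\det \mathrm{NS}|=|\det T(F^{(2)})|$, and Theorem~\ref{structure-theorem1-shioda} gives $|\det T(F^{(2)})| = |\det T(F^{(1)})|\cdot 2^{4}$. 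Finally $|\det T(F^{(1)})|=1$: applying the same relation to the Inose surface, whose trivial lattice $U\oplus E_8\oplus E_8$ is unimodular and whose Mordell--Weil group is trivial in the non-isogenous case, yields $|\det \mathrm{NS}(F^{(1)})|=1$. Hence $|\det F^{(2)}(\bar k(t))|=16/9$, matching the sublattice, so $P_1,\dots,P_4$ generate.

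It then remains to name the lattice. The Gram matrix is block-diagonal with each block $\tfrac23\left(\begin{smallmatrix}2&-1\\-1&2\end{smallmatrix}\right)$; flipping one generator turns this into $\tfrac23\left(\begin{smallmatrix}2&1\\1&2\end{smallmatrix}\right)$, exactly the Gram matrix of $A_2^*$ scaled by $2$, so $F^{(2)}_{E_\lambda,E_\mu}(\bar k(t))\cong A_2^*\langle 2\rangle\oplus A_2^*\langle 2\rangle$. The main obstacle throughout is the second step: correctly pinning down the $\IV^*$ components met by each section and the numbers $(P_i\cdot P_j)$, since this forces one to resolve the Weierstrass model at $t=0,\infty$ and track the sections through the resolution.
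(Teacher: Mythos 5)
Your proposal is correct and is essentially a fully worked-out version of the paper's own proof, which consists of the single sentence ``This follows from standard and straightforward calculations.'' The ingredients you assemble --- direct substitution to check the sections, Shioda's height formula with the local contributions at the two $\IV^*$ fibers, and the discriminant comparison $|\det \MW(F^{(2)})| = |\det T(F^{(1)})\langle 2\rangle|/|\det(U\oplus E_6\oplus E_6)| = 16/9$ to rule out a proper finite-index sublattice --- are exactly the standard calculation the authors have in mind, and your numbers all check out.
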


\begin{proof}
This follows from standard and straightforward calculations.
\end{proof}

\begin{corollary}
Let $E_{1}$ and $E_{2}$ be elliptic curves over $k$.  Suppose $E_{1}$
and $E_{2}$ are not isogenous.  Then, the Mordell-Weil lattice
$F^{(2)}_{E_{1},E_{2}}(\bar k(t_{2}))$ is defined over
$k(E_{1}[2],E_{2}[2])$, the field over which all the $2$-torsion
points of $E_{1}$ and $E_{2}$ are defined.
\end{corollary}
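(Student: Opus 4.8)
The plan is to exhibit a set of sections that generate $F^{(2)}_{E_1,E_2}(\kbar(t_2))$ and are all defined over $K:=k(E_1[2],E_2[2])$; since $E_1,E_2$ are given over $k$, the surface $F^{(2)}_{E_1,E_2}$ together with its fibration is defined over $k$, so producing such a generating set forces $\Gal(\kbar/K)$ to act trivially on the whole Mordell-Weil lattice, which is exactly the assertion. The hypothesis that $E_1$ and $E_2$ are non-isogenous is what makes this feasible: by the rank formula and Proposition \ref{prop:F^2} it guarantees that $\MW(F^{(2)})$ has rank $4$ with the four explicit sections $P_1,\dots,P_4$ as a basis, so there are no further sections coming from $\Hom(E_1,E_2)$ to account for.

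First I would reduce both curves to Legendre form over $K$. Adjoining the full $2$-torsion splits the defining cubic of each $E_i$ completely, so over $K$ each $E_i$ becomes isomorphic to a Legendre curve $E_\lambda$ (resp. $E_\mu$) as in \eqref{legendre}, with $\lambda,\mu\in K$, via an admissible change of Weierstrass coordinates $x=u_i^2x'+r_i$, $y=u_i^3y'$ in which $r_i\in K$ and $u_i^2\in K$ (the translation being a $2$-torsion $x$-coordinate and the scaling being $u_i^2=e_2-e_1$, a difference of such coordinates). Next I would invoke the covariance of the construction \eqref{ABCD}: the quantities $A,B,C,D$ are built from the standard invariants $c_4,c_6,\Delta$ of the two curves, which transform by fixed powers of $u_i$ under such a change of variables. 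Matching coefficients identifies $F^{(2)}_{E_1,E_2}$ with $F^{(2)}_{E_\lambda,E_\mu}$ through a map of the form $(X,Y,t)=(\alpha X',\beta Y',\gamma t')$, and for $n=2$ one finds $\alpha=u_1^2u_2^2$, $\beta^2=\alpha^3$, and $\gamma^2=(u_2^2/u_1^2)^3$; in particular $\alpha\in K$ and $\gamma^2\in K$, while $\beta$ and $\gamma$ individually may only lie in a quadratic extension of $K$.

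Finally I would transport the basis of Proposition \ref{prop:F^2} through this identification and check that the resulting sections of $F^{(2)}_{E_1,E_2}$ lie in $K(t)$. The $X$-coordinate of each $P_i$ is a polynomial in $\lambda,\mu$, so after multiplication by $\alpha\in K$ it stays in $K$. The $Y$-coordinate has the shape $\tfrac12\lambda(\lambda-1)t'\pm\tfrac{\mu(\mu-1)}{2t'}$ (compare \eqref{eq:F^2}), so under $t'=t/\gamma$ and $Y=\beta Y'$ its two terms are scaled by $\beta/\gamma$ and $\beta\gamma$ respectively. The main obstacle, and the crux of the argument, is precisely this bookkeeping: although $\beta$ and $\gamma$ each carry a square-root irrationality, the only combinations that actually appear in the section coordinates are $\alpha=u_1^2u_2^2$, $\beta/\gamma=u_1^6$ and $\beta\gamma=u_2^6$, each of which is a cube of an element of $K$ and hence lies in $K$. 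Thus every transported generator lies in $F^{(2)}_{E_1,E_2}(K(t))$, and since these four sections form a basis, the entire Mordell-Weil lattice is defined over $K=k(E_1[2],E_2[2])$, as claimed.
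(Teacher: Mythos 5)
Your proposal is correct and follows the same route the paper takes implicitly: the corollary is an immediate consequence of Proposition \ref{prop:F^2}, since over $k(E_1[2],E_2[2])$ both curves admit Legendre models and the explicit basis $P_1,\dots,P_4$ is visibly rational in $\lambda,\mu$. Your careful tracking of the scaling factors $\alpha=u_1^2u_2^2$, $\beta/\gamma=u_1^6$, $\beta\gamma=u_2^6$ under the change of Weierstrass model is exactly the bookkeeping the paper suppresses (it records only the covariance remark after \eqref{change-of-var}), and it checks out.
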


If $E_{1}$ and $E_{2}$ are isogenous but not isomorphic, then
$\Hom(E_{1},E_{2})\<4\>\oplus A_{2}^{*}\<2\>\oplus A_{2}^{*}\<2\>$ is
a sublattice of the Mordell-Weil lattice of index $2^h$, where $h=\rank \Hom(E_{1},E_{2})$ (see
\cite[Theorem~1.2]{Shioda:correspondence}).

\section{The Mordell-Weil group of $F^{(3)}$} \label{sec:F3}

In the case of $F^{(2)}$, it is evident that if $E_{1}$ and $E_{2}$
are given in the Legendre normal form, the sections are defined over
the base field.  This is because all the $2$-torsion points are
defined over the base field.  In the case of $F^{(3)}$, it is not so
evident that $3$-torsion points of $E_{1}$ and $E_{2}$ have something
to do with the field of definition of the sections.  However, it turns
out that this is the case, as we will show in this section.

\subsection{Elliptic modular surface associated with $\Gamma(3)$}
Let us begin with the Hesse cubic $x^{3}+y^{3}+z^{3}=3\mu xyz$.  Using
the point $(-1:1:0)$, we convert it to the Weierstrass form
(see \cite{Rubin-Silverberg}):
\begin{equation}\label{X(3)}
y^{2} = x^3-27 \mu(\mu^{3}+8) x+ 54(\mu^6-20\mu^3-8).
\end{equation} 
This elliptic curve has nine $3$-torsion points that are defined over
$k(\omega)$, where $\omega$ is a primitive cube root of unity.

The group of $3$-torsion points is generated by
\[
T_{1}=\bigl(3(\mu+2)^2, 36(\mu^2+\mu+1)\bigr) \quad \textrm{ and } \quad
T_{2}=\bigl(-9\mu^2, 12(2\omega+1)(\mu^2+\mu+1)\bigr).
\]
Let 
\[
\theta = \begin{pmatrix} 1 & 1 \\ 0 & 1 \end{pmatrix} \textrm{ and } \rho = \begin{pmatrix} 0 & 1 \\ -1 & 0 \end{pmatrix}.
\]
be generators for $G = \SL_2(\F_3)$. Consider the following
representation $\pi$ of $G$ on the projective line $\Proj^1_\mu$ by
fractional linear transformations, which factors through
$\PSL_2(\F_3) \cong A_4$.  
\[
\pi(\theta):\mu \mapsto \omega \mu ,\quad \pi(\rho):\mu \mapsto \frac{\mu+2}{\mu-1}.
\]
The $j$-invariant of \eqref{X(3)} is given by
\[
j=\frac{27\mu^3(\mu+8)^3}{(\mu^{3}-1)^3},
\]
and is invariant under the action of $\pi(\SL_2(\F_{3}))$.

Now we regard \eqref{X(3)} as an elliptic surface.  In other words,
consider the elliptic modular surface $\mathcal{E}_{\Gamma(3)}\to
X(3)\isom\P^{1}_{\mu}$ whose generic fiber at $\mu$ is given by
\eqref{X(3)}.

\begin{lemma}\label{lemma:G-action-on-E(3)}
The action of $G=\SL_2(\F_3)$ on the base $\Proj^1_\mu$ extends to a
compatible faithful action $\widetilde{\pi}$ on the surface
$\mathcal{E}_{\Gamma(3)}$.  It is given by the formulas
\begin{equation}\label{sl(2,3)-action}
\begin{gathered}
\widetilde{\pi}(\theta):(x,y,\mu) 
\mapsto (\omega^{2} x, y, \omega \mu), \qquad 
\widetilde{\pi}(\rho):(x,y,\mu) 
\mapsto \left(\frac{(2\omega+1)^{2}}{(\mu-1)^2}x,
\frac{(2\omega+1)^{3}}{(\mu-1)^3}y ,\frac{\mu+2}{\mu-1} \right).
\end{gathered} 
\end{equation}
The action $\widetilde{\pi}$ in turn induces an action $\Pi$ of $G$ on
the group of sections $\mathcal{E}_{\Gamma(3)}(k(\mu))$ of the
elliptic surface as follows.  For a section $s:\P^{1}_{\mu}\to
\mathcal{E}_{\Gamma(3)}$ given by rational functions $\mu\mapsto
P(\mu) = (x(\mu),y(\mu))$, $\Pi(\gamma)(s)$ for $\gamma \in G$ is
defined to be $\mu \mapsto P'(\mu)$ where
\[
P'(\mu) = \widetilde{\pi}(\gamma) \bigl(  P( \pi(\gamma^{-1})(\mu) \bigr).
\]
The action of $G$ by $\Pi$ on the subgroup of $3$-torsion sections
$\mathcal{E}_{\Gamma(3)}(k(\mu))[3]$ is equivalent to the usual linear
action of $\SL_2(\F_3)$ on $\F_3^2$ (and identifies $G$ as the
automorphism group of the $3$-torsion subgroup equipped with the Weil
pairing).  More precisely, we have
\[
\Pi(\theta):\left\{\begin{array}{ccl}
T_{1} & \mapsto & T_{1} - T_{2}\\
T_{2} & \mapsto & T_{2}
\end{array}\right.,
\qquad
\Pi(\rho):\left\{\begin{array}{ccr}
T_{1} & \mapsto & T_{2}\\
T_{2} & \mapsto & -T_{1}
\end{array}\right..
\]
\end{lemma}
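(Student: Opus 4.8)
The plan is to prove the lemma by explicit computation with the Weierstrass model \eqref{X(3)}, in three stages: that \eqref{sl(2,3)-action} defines automorphisms covering $\pi(\theta),\pi(\rho)$; that these assemble into a faithful action of all of $G$; and that the induced action on the $3$-torsion is the standard one.

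First I would check that $\widetilde\pi(\theta)$ and $\widetilde\pi(\rho)$ are automorphisms of $\mathcal{E}_{\Gamma(3)}$ lying over $\pi(\theta)$ and $\pi(\rho)$, by substitution into \eqref{X(3)}. For $\theta$ this is immediate from $\omega^{3}=1$: passing from $\mu$ to $\omega\mu$ leaves the constant term unchanged and scales the coefficient of $x$ by $\omega$, while $x\mapsto\omega^{2}x$, $y\mapsto y$ scales $x^{3}$ by $\omega^{6}=1$ and the linear term by $\omega\cdot\omega^{2}=1$, restoring \eqref{X(3)}. For $\rho$ the verification is the same in spirit but longer: writing $\mu'=(\mu+2)/(\mu-1)$ and using the identity $(2\omega+1)^{2}=-3$, one clears the denominators $(\mu-1)^{2}$ and $(\mu-1)^{3}$ and checks that the scaled coordinates satisfy \eqref{X(3)} with $\mu$ replaced by $\mu'$. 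Along the way I would record the by-product $\widetilde\pi(\rho)^{2}\colon(x,y,\mu)\mapsto(x,-y,\mu)$, the elliptic involution, which drops out of $(2\omega+1)^{4}=9$, $(2\omega+1)^{6}=-27$ and $\mu'-1=3/(\mu-1)$.

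Next I would upgrade the two automorphisms to an action of $G=\SL_2(\F_3)$. Because $\pi$ and $\widetilde\pi$ are compatible over the base, the rule $\Pi(\gamma)(s)=\widetilde\pi(\gamma)\circ s\circ\pi(\gamma^{-1})$ is automatically a left action on sections once $\gamma\mapsto\widetilde\pi(\gamma)$ is a homomorphism, so it suffices to check that $\widetilde\pi(\theta),\widetilde\pi(\rho)$ obey the relations of $\SL_2(\F_3)$ in the generators $\theta,\rho$; the key ones, $\widetilde\pi(\theta)^{3}=\mathrm{id}$ and $\widetilde\pi(\rho)^{2}=$ elliptic involution (central of order two, so equal to $\widetilde\pi(-I)$), are read off from the formulas, and the rest follow by the same direct substitution. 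Faithfulness is then clear: $\pi$ is already faithful on $\PSL_2(\F_3)\cong A_4$ (the two M\"obius maps generate a copy of $A_4$), so $\ker\widetilde\pi\subseteq\{\pm I\}$, while $\widetilde\pi(-I)$ is the nontrivial elliptic involution; hence $\ker\widetilde\pi$ is trivial.

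The substance of the proof is the final stage: computing $\Pi(\theta)$ and $\Pi(\rho)$ on the generators $T_{1},T_{2}$. By the definition of $\Pi$, for $\gamma=\theta$ one substitutes $\mu\mapsto\omega^{2}\mu$ (that is, $\pi(\theta^{-1})$) into the coordinates of $T_{1}(\mu)$ and $T_{2}(\mu)$ and then applies the scaling $x\mapsto\omega^{2}x$; for $\gamma=\rho$ one substitutes $\mu\mapsto(\mu+2)/(\mu-1)$ and applies the corresponding rational scaling. Each output is a $3$-torsion point of \eqref{X(3)} that must be identified with the claimed value ($T_{1}-T_{2}$ and $T_{2}$ for $\Pi(\theta)$; $T_{2}$ and $-T_{1}$ for $\Pi(\rho)$). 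I expect this identification to be the main obstacle: it requires running the explicit torsion coordinates through the addition law of \eqref{X(3)} and simplifying, a lengthy but purely mechanical calculation (and the natural one to confirm with the auxiliary computer files). Once the four images are matched, $\Pi$ acts on the basis $T_{1},T_{2}$ of $\mathcal{E}_{\Gamma(3)}(k(\mu))[3]\cong\F_3^{2}$ through the two maps displayed in the statement; the corresponding matrices have determinant $1$ and generate $\SL_2(\F_3)$, so the representation is the natural (Weil-pairing-preserving) action of $\SL_2(\F_3)$ on $\F_3^{2}$, completing the proof.
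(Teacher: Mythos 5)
Your proposal is correct and coincides with the direct verification the paper leaves implicit: the lemma is stated without proof (the computations being deferred to the auxiliary files), and your three stages --- checking that the displayed formulas preserve \eqref{X(3)}, checking the $\SL_2(\F_3)$ relations together with $\widetilde{\pi}(\rho)^2=[-1]$, and matching the images of $T_1,T_2$ via the addition law --- are exactly what is required. One caveat for carrying it out literally: the printed $y$-coordinate of $T_2$ should read $12(2\omega+1)(\mu^3-1)$ rather than $12(2\omega+1)(\mu^2+\mu+1)$, since $x=-9\mu^2$ forces $y^2=-432(\mu^3-1)^2$ on \eqref{X(3)}; with that correction the addition-law computation does close up, e.g.\ both $\Pi(\theta)(T_1)$ and $T_1-T_2$ have $x$-coordinate $3(\mu+2\omega)^2$, and the other three images come out as claimed.
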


\begin{remark}
We have $\tilde{\pi}(\rho)^{2}=[-1]$, the multiplication by $(-1)$
map, and thus the action of $G=\SL_2(\F_3)$ does not factor through
the quotient $\PSL_2(\F_3)$.
\end{remark}

\subsection{$F^{(3)}$ for universal families}
Now, take two copies of \eqref{X(3)},
\begin{align*}
E_{u} &: y_{1}^{2} = x_{1}^3-27 u(u^{3}+8) x_{1}+ 54(u^6-20u^3-8), \\
E_{v} &: y_{2}^{2} = x_{2}^3-27 v(v^{3}+8) x_{2}+ 54(v^6-20v^3-8),
\end{align*}
and construct $\Ino(E_{u},E_{v})$, and in turn, $F^{(3)}_{E_{u},E_{v}}$:
\begin{multline}\label{F^(3)}
F^{(3)}_{E_{u},E_{v}}: Y^{2} = X^{3} -27uv(u^3+8)(v^3+8)X  
\\
+ 1728 (u^3-1)^3 t_{3}^{3}
+ 54(u^6-20u^3-8)(v^6-20v^3-8) 
+ \frac{1728(v^3-1)^3}{t_{3}^{3}}.
\end{multline}
Here, since $\Delta_{E_{1}}=2^{12}\cdot 3^{9}(\mu^{3}-1)^3$, and so
forth, we scaled $X$ and $Y$ differently from \eqref{ABCD}; the
difference is a Weierstrass transformation over $\Q$.

Let $\mathcal{E}_{u}\to \P^{1}_{u}$ and $\mathcal{E}_{v}\to
\P^{1}_{v}$ be the elliptic modular surfaces associated with $E_{u}$
and $E_{v}$ respectively.  Also let $G_u = \< \theta_u, \rho_u \>$ and
$G_v = \< \theta_v, \rho_v \>$ be groups of automorphisms of
$\mathcal{E}_{u}$ and $\mathcal{E}_{v}$ described above, respectively.
We consider \eqref{F^(3)} as the family of elliptic surfaces
$\mathcal{F}^{(3)}_{E_{u},E_{v}}\to \P^{1}_{u}\times \P^{1}_{v}$
parametrized by $u$ and $v$. The total space is a fourfold.

\begin{proposition} \label{action:fourfold}
  The actions of $G_{u}$ and $G_{v}$ induce the action
  $\widetilde{\Pi}$ on the fourfold $\mathcal{F}^{(3)}_{E_{u},E_{v}}$
  given by the following formulas.
\begin{gather*}
\widetilde{\Pi}(\theta_{u}):(X,Y,t_{3},u,v)
\mapsto (\omega^{2}X,Y,\omega t_{3},\omega u,v)
\\
\widetilde{\Pi}(\theta_{v}):(X,Y,t_{3},u,v) 
\mapsto (\omega^{2}X,Y,\omega^2 t_{3},u,\omega v),
\\
\widetilde{\Pi}(\rho_{u}):(X,Y,t_{3},u,v)
\mapsto \left(\frac{(2\omega+1)^2}{(u-1)^2}X,
\frac{(2\omega+1)^3}{(u-1)^3}Y,\frac{(u-1)^2}{(2\omega+1)^2}t_{3},
\frac{u+2}{u-1},v\right),
\\
\widetilde{\Pi}(\rho_{v}):(X,Y,t_{3},u,v)
\mapsto \left(\frac{(2\omega+1)^2}{(v-1)^2}X,
\frac{(2\omega+1)^3}{(v-1)^3}Y,
\frac{(2\omega+1)^2}{(v-1)^2}t_{3},u,\frac{v+2}{v-1}\right).
\end{gather*}
\end{proposition}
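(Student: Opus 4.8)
The plan is to realize $\widetilde{\Pi}$ as the transport of the two modular-surface actions of Lemma~\ref{lemma:G-action-on-E(3)} through the functoriality of the assignment $(E_1,E_2)\mapsto F^{(3)}_{E_1,E_2}$ under changes of Weierstrass model. First I would record the relevant \emph{scaling law}. Replacing $E_1$ by the isomorphic curve whose coefficients are scaled by $k$ (the isomorphism being $(x,y)\mapsto(k^2x,k^3y)$) and $E_2$ by the curve scaled by $l$ multiplies the invariants $A,\,B=\Delta_{E_1},\,C,\,D=\Delta_{E_2}$ of \eqref{ABCD} by $k^4l^4,\,k^{12},\,k^6l^6,\,l^{12}$ respectively. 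Matching all four of the $X$, $t_3^{3}$, constant, and $t_3^{-3}$ terms then shows that the equation of $F^{(3)}$ is recovered after the substitution
\[
(X,Y,t_3)\longmapsto\bigl((kl)^2X,\ (kl)^3Y,\ (l/k)^{2}t_3\bigr).
\]
The last entry is the $n=3$ instance of the $t$-rescaling $t_6\mapsto(l/k)t_6$ recorded after \eqref{eq:F6}, the exponent on $l/k$ being $6/n$. Since the constant rescaling that passes from \eqref{eq:F6}/\eqref{ABCD} to the minimal model \eqref{F^(3)} scales only $X,Y$ (by $3^2,3^3$) and leaves $t_3$ fixed, it does not affect the shape of this substitution.

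Next I would feed in the explicit isomorphisms of Lemma~\ref{lemma:G-action-on-E(3)}. On $E_u$ the generator $\theta$ acts by $(x,y)\mapsto(\omega^2x,y)$ covering $u\mapsto\omega u$, which is exactly the model change with $k=\omega$ on the first factor and $l=1$; the scaling law then gives $(X,Y,t_3)\mapsto(\omega^2X,Y,\omega t_3)$ (using $\omega^{-2}=\omega$), i.e. $\widetilde{\Pi}(\theta_u)$. The symmetric choice $k=1,\,l=\omega$ yields $\widetilde{\Pi}(\theta_v)$, now with $t_3\mapsto\omega^2 t_3$. Similarly $\rho$ acts on $E_u$ as the scaling by $k=(2\omega+1)/(u-1)$ covering $u\mapsto(u+2)/(u-1)$, so taking $k=(2\omega+1)/(u-1),\,l=1$ produces $\widetilde{\Pi}(\rho_u)$, and taking $l=(2\omega+1)/(v-1),\,k=1$ produces $\widetilde{\Pi}(\rho_v)$. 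In every case the stated formula is simply read off the substitution, the $t_3$-component $(l/k)^2t_3$ accounting for the powers of $(u-1)$, $(v-1)$ and $(2\omega+1)$. An equally valid but less illuminating route, which I would use as a cross-check, is to substitute each claimed formula directly into \eqref{F^(3)} and verify that the equation is preserved under the corresponding reparametrization of $(u,v)$.

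Finally I would confirm that the four maps assemble into an action of $G_u\times G_v$ on the fourfold. This is essentially automatic: $\gamma\mapsto\widetilde{\Pi}(\gamma)$ is the functorial image of the honest automorphisms $\widetilde{\pi}(\gamma)$ of $\mathcal{E}_{\Gamma(3)}$, hence respects composition (for instance $\widetilde{\Pi}(\rho_u)^2$ is induced by $[-1]$ on $E_u$, i.e. $(X,Y,t_3)\mapsto(X,-Y,t_3)$, matching the Remark after the lemma), and the $u$- and $v$-scalings commute because their effects on $(X,Y,t_3)$ multiply independently. The step requiring the most care is the bookkeeping of the $t_3$-exponent $(l/k)^{6/n}$: this is the only place where $E_1$ and $E_2$ enter asymmetrically, and it is responsible for the nontrivial $t_3$-entries in all four formulas as well as for the correct direction of the induced base maps $u\mapsto\omega u$ and $u\mapsto(u+2)/(u-1)$. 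One must also fix $k$ consistently with the chosen isomorphism in Lemma~\ref{lemma:G-action-on-E(3)} (for example $k=\omega$ rather than $-\omega$, forced by $k^3=1$), so as to avoid a spurious sign in $Y$.
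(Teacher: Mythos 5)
Your argument is correct and arrives at all four formulas, but it routes the computation differently from the paper. The paper's proof stays on $\mathcal{F}^{(6)}$ in the cubic model $x_{2}^{3}+cx_{2}+d=t_{6}^{6}(x_{1}^{3}+ax_{1}+b)$: each generator of $G_u\times G_v$ visibly stabilizes that equation but moves the chosen origin $(1:t_{6}^{2}:0)$, the motion of the origin dictates the twist of $t_6$ (e.g.\ $t_6\mapsto\omega^2t_6$ for $\theta_u$), and the effect on $(X,Y)$ is then read off from the change of variables \eqref{change-of-var} before descending to $\mathcal{F}^{(3)}$. You instead stay entirely in Weierstrass coordinates and use the rescaling covariance recorded after \eqref{eq:F6}; your normalization $(X,Y,t_6)\mapsto\bigl((kl)^2X,(kl)^3Y,(l/k)t_6\bigr)$ is in fact the correct general form (the paper's printed $(l^4X,l^6Y,\dots)$ only balances the $X$-coefficient when $k^4=l^4$), and identifying each generator of Lemma~\ref{lemma:G-action-on-E(3)} with a model change by a specific $k$ or $l$ then yields the four formulas uniformly. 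Your route is shorter, avoids \eqref{change-of-var} entirely, and makes the group-theoretic checks ($\rho_u^2=[-1]$, commutation of the $u$- and $v$-factors) transparent. The one thing it does not deliver for free is the exact $t_3$-component: matching the $t_3^{\pm3}$ terms of \eqref{F^(3)} pins down the twist of $t_3$ only up to a cube root of unity, i.e.\ up to a deck transformation of $\mathcal{F}^{(3)}\to\mathcal{F}^{(1)}$, whereas the paper's tracking of the origin fixes $t_6\mapsto\omega^2t_6$ and hence $t_3=t_6^2\mapsto\omega t_3$ on the nose. You should close this by fixing the root consistently, for instance by noting that the Kummer parameter $t_2=y_2/y_1$ transforms as $t_2\mapsto(l/k)^3t_2$ under the model change, so the compatible choice is $t_6\mapsto(l/k)t_6$ and $t_3\mapsto(l/k)^2t_3$; with that normalization your formulas agree with the proposition exactly.
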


\begin{proof}
  Recall that $F^{(6)}$ is, by definition, birationally equivalent to
  $x_{2}^{3}+cx_{2}+d=t_{6}^{6}(x_{1}^{3}+ax_{1}+b)$, where $a,b,c,d$
  are appropriate functions of $u,v$.  (Note also that $F^{(3)}$ is
  \emph{not} birationally equivalent to
  $x_{2}^3+cx_{2}+d=t_{3}^{3}(x_{1}^{3}+ax_{1}+b)$, the latter being a
  rational surface.)  The action on $\mathcal{E}_{u}$ or
  $\mathcal{E}_{v}$ induces the action on $(X,Y,t_{6})$ through the
  change of variables \eqref{change-of-var}. The change of variables
  also depends on the choice of the rational point $(1:t_{6}^{2}:0)$.

  Since $\theta_{u}$ acts as $(x_{1},y_{1},u)\mapsto
  (\omega^{2}x_{1},y_{1},\omega u)$, it stabilizes the equation
  $x_{2}^3+cx_{2}+d=t_{6}^{6}(x_{1}^{3}+ax_{1}+b)$, but it moves the
  rational point $(1:t_{6}^{2}:0)$ to $(\omega^{2}:t_{6}^{2}:0) =
  \big(1: (\omega^2 t_6)^2 : 0\big)$. Therefore, $t_6 \mapsto \omega^2
  t_6$ under this transformation.  Using \eqref{change-of-var}, we see
  that $X$ is mapped to $\omega^{2}X$ and $Y$ remains invariant. Thus,
  $\theta_u$ acts as $(X,Y,t_{6},u,v)\mapsto (\omega^{2}X,Y,\omega^{2}
  t_{6},\omega u,v)$ on $\mathcal{F}^{(6)}$.  This action descends to
  the above expression for $\widetilde{\Pi}(\theta_{u})$ on
  $\mathcal{F}^{(3)}_{E_{u},E_{v}}$.

  As for $\rho_{u}$, the action 
\[(x_{1},x_{2},t_{6},u,v)\mapsto
  \left(\frac{(2\omega+1)^{2}}{(u-1)^2} x_{1},x_{2},\frac{(u-1)}{(2\omega+1)}
  t_{6},\frac{(u+2)}{(u-1)}v\right)
\]
 stabilizes the equation, and by following
  the change of coordinates \eqref{change-of-var}, we obtain the
  desired formula. Similarly, we obtain the expressions for
  $\theta_{v}$ and $\rho_{v}$.
\end{proof}
For clarity of notation, we will henceforth use just $\gamma$ instead
of $\pi(\gamma), \widetilde{\pi}(\gamma), \Pi(\gamma),
\widetilde{\Pi}(\gamma)$, with the action being understood.

\begin{corollary}
  The group $G_{u}\times G_{v}$ acts on the group of sections
  $F^{(3)}_{E_{u},E_{v}}\bigl(\bar k(u,v)(t_{3})\bigr)$.  More
  precisely, if $\gamma\in G_{u}\times G_{v}$ and $s\in
  F^{(3)}_{E_{u},E_{v}}\big(\bar k(u,v)(t_{3})\big)$ is given by
  $X(t_{3},u,v)$ and $Y(t_{3},u,v)$, then $\gamma \cdot s$ is
  given by
\[
\gamma X\big(\gamma^{-1}t_{3},\gamma^{-1}u,\gamma^{-1}v \big) \textrm{ and }
\gamma Y\big(\gamma^{-1}t_{3},\gamma^{-1}u,\gamma^{-1}v \big).
\]
\end{corollary}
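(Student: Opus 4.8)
The plan is to obtain this as the transport of structure induced by the fourfold automorphisms $\widetilde{\Pi}(\gamma)$ of Proposition \ref{action:fourfold}, exactly paralleling the passage from $\widetilde{\pi}$ to $\Pi$ in Lemma \ref{lemma:G-action-on-E(3)}. First I would record three features of each formula in Proposition \ref{action:fourfold}. The transformation is a scaling $(X,Y)\mapsto(\lambda^2 X,\lambda^3 Y)$ in the fiber coordinates (with $\lambda=\omega$ for $\theta_u$, $\lambda=(2\omega+1)/(u-1)$ for $\rho_u$, and so on), hence a Weierstrass isomorphism fixing the point at infinity, i.e. the zero section. The induced substitution on the base, $(t_3,u,v)\mapsto(\gamma t_3,\gamma u,\gamma v)$, is a field automorphism $\sigma_\gamma$ of $K:=\bar k(u,v)(t_3)$. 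And, by the content of Proposition \ref{action:fourfold}, $\widetilde{\Pi}(\gamma)$ stabilizes the Weierstrass equation \eqref{F^(3)}. Together these say that on the generic fiber $\widetilde{\Pi}(\gamma)$ is a $\sigma_\gamma$-semilinear group isomorphism of $F^{(3)}_{E_u,E_v}/K$ onto itself preserving the origin.

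Next I would verify that $\widetilde{\Pi}$ is genuinely a homomorphism $G_u\times G_v\to\Aut\bigl(\mathcal F^{(3)}_{E_u,E_v}\bigr)$. The $G_u$- and $G_v$-actions commute, since they involve the disjoint M\"obius substitutions in $u$ and in $v$ together with multiplicative scalings of $X$, $Y$ and $t_3$, all of which commute; and each factor is an action because $\widetilde{\Pi}(\theta_u),\widetilde{\Pi}(\rho_u)$ arise from $\widetilde{\pi}(\theta),\widetilde{\pi}(\rho)$ of Lemma \ref{lemma:G-action-on-E(3)} through the single change of variables \eqref{change-of-var}, and therefore satisfy the same defining relations of $\SL_2(\F_3)$ (and likewise for the $v$-factor).

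I would then set, for a section with graph $\Gamma_s=\{(X(t_3,u,v),Y(t_3,u,v),t_3,u,v)\}$, the section $\gamma\cdot s$ to be the one with graph $\widetilde{\Pi}(\gamma)(\Gamma_s)$. Re-expressing this image as a graph over the base coordinates, via $(t_3,u,v)=(\gamma^{-1}t_3',\gamma^{-1}u',\gamma^{-1}v')$, produces exactly the displayed formula: the outer $\gamma$ is the fiber scaling of $\widetilde{\Pi}(\gamma)$ and the arguments are hit by $\gamma^{-1}=\sigma_{\gamma^{-1}}$, so that $\gamma\mapsto(\gamma\cdot-)$ is a left action and $(\gamma_1\gamma_2)\cdot s=\gamma_1\cdot(\gamma_2\cdot s)$ is formal from the homomorphism property above. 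Since $\widetilde{\Pi}(\gamma)$ stabilizes \eqref{F^(3)}, the transformed pair again solves the Weierstrass equation and so $\gamma\cdot s$ is again a $K$-point; and since $\widetilde{\Pi}(\gamma)$ fixes the zero section and acts fiberwise as a group isomorphism, $s\mapsto\gamma\cdot s$ is an automorphism of the Mordell--Weil group rather than merely a bijection of the underlying set.

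The substantive work was already discharged in Proposition \ref{action:fourfold}, so what remains is essentially bookkeeping, and the one place I expect to have to be careful is the direction and evaluation of the substitution. One must use $\sigma_{\gamma^{-1}}$ on the arguments (the $\gamma^{-1}$ in the displayed formula) to get a left rather than a right action; and because the fiber scaling in, e.g., $\widetilde{\Pi}(\rho_u)$ depends on $u$, the symbol ``$\gamma X$'' must be understood as evaluated at the source point, i.e. the $u$ inside its scaling factor is also replaced by $\gamma^{-1}u$. Checking that the combined substitution $(t_3,u,v)\mapsto(\gamma t_3,\gamma u,\gamma v)$ --- in which $\gamma t_3$ may itself depend on $u$ --- is an automorphism of $K$ makes this consistency precise and confirms that composition behaves as claimed.
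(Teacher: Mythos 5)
Your proposal is correct and is exactly the argument the paper intends: the corollary is stated as an immediate consequence of Proposition \ref{action:fourfold}, with the action on sections obtained by transport of structure through the fourfold automorphisms (which preserve the Weierstrass equation, fix the zero section, and scale the fiber coordinates as a Weierstrass isomorphism), just as in the passage from $\widetilde{\pi}$ to $\Pi$ in Lemma \ref{lemma:G-action-on-E(3)}. The care you take with the $\gamma^{-1}$ in the arguments to get a left action, and with evaluating the $u$-dependent scaling factors at the source point, matches the conventions the paper uses in its subsequent explicit formulas.
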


From the action of $D_{6}$ on $F^{(6)}$ (see \eqref{D6-action}), we
see that $\<\sigma^{2},\tau\>\simeq D_{3}\simeq S_{3}$ acts on
$F^{(3)}$.  We summarize the action on a section
$(X(t_{3},u,v),Y(t_{3},u,v))$ as follows.
\begin{align*}
& \sigma^{2}\big(X(t_{3},u,v),Y(t_{3},u,v)\big)=\big(X(\omega^2 t_{3},u,v),Y(\omega^2 t_{3},u,v)\big)\\
& \tau\big(X(t_{3},u,v),Y(t_{3},u,v)\big)=\big(X(\delta^2/t_{3},u,v),Y(\delta^2/t_{3},u,v)\big)
\\
& \theta_{u}\big(X(t_{3},u,v),Y(t_{3},u,v)\big)
=\big(\omega^{2}X(\omega^2 t_{3},\omega^{2} u,v),Y(\omega^2 t_{3},\omega^{2} u,v)\big)
\\
& \theta_{v}\big(X(t_{3},u,v),Y(t_{3},u,v)\big)
=\big(\omega^{2}X(\omega t_{3},u,\omega^2 v),Y(\omega t_{3},u,\omega^2 v)\big)
\\
& \rho_{u}\big(X(t_{3},u,v),Y(t_{3},u,v)\big)
\\
&\qquad =\left(
\frac{(u-1)^{2}}{(2\omega+1)^{2}}
X\Bigl(\frac{(u-1)^{2}}{(2\omega+1)^{2}}t_{3},\frac{u+2}{u-1},v\Bigr),
-\frac{(u-1)^{3}}{(2\omega+1)^{3}}
Y\Bigl(\frac{(u-1)^{2}}{(2\omega+1)^{2}}t_{3},\frac{u+2}{u-1},v\Bigr)
\right)
\\
& \rho_{v}\big(X(t_{3},u,v),Y(t_{3},u,v)\big)
\\
&\qquad =\left(
\frac{(v-1)^{2}}{(2\omega+1)^{2}}
X\Bigl(\frac{(2\omega+1)^{2}}{(v-1)^{2}}t_{3},u,\frac{v+2}{v-1}\Bigr),
-\frac{(v-1)^{3}}{(2\omega+1)^{3}}
Y\Bigl(\frac{(2\omega+1)^{2}}{(v-1)^{2}}t_{3},u,\frac{v+2}{v-1}\Bigr)
\right) 
\end{align*}
We may expect that the orbit of a section by these automorphisms
generates the whole Mordell-Weil group, and it turns out that it is
the case.  To show this we must find one section, and we will do this
in the following subsections.

\subsection{Computation of $M(s_{3,0})$}
As we modified the equation of $F^{(3)}$ from the one in \S2, we also
change the definition of $s_{3,i}$ to
\[
s_{3,i}=4\Bigl((u^3-1)t_{3}+\frac{v^3-1}{\omega^{i}t_{3}}\Bigr).
\]
In what follows we denote $s_{3,0}$ simply by $s$.  Then,
\eqref{F^(3)} becomes
\begin{multline}\label{E_s}
R_s^{(3)}:Y^{2} = X^3 - 27 u v (u^3+8) (v^3+8) X \\
+ 27 \bigl(s^3 -48 (u^3-1) (v^3-1) s + 2 (u^6-20 u^3-8) (v^6-20 v^3-8)\bigr).
\end{multline}
$R_s^{(3)}$ is a rational elliptic surface over $k(u,v)$ that has a
singular fiber of type I${}^{*}_{0}$ at $s=\infty$.  As a consequence,
its Mordell-Weil lattice is of type $D_{4}^{*} $ (type
n${}^{\textup{o}}$ 9 in the Oguiso-Shioda \cite{Oguiso-Shioda}
classification).  The Mordell-Weil lattice of a rational elliptic
surfrace is generated by the section of the form
$(c_{2}s^{2}+c_{1}s+c_{0},d_{3}s^{3}+d^{2}s^{2}+d_{1}s+d_{0})$.  An
easy calculation shows that the fact that our elliptic surface has a
singular fiber of type I${}^{*}_{0}$ at $s=\infty$ implies
$c_{2}=d_{3}=d_{2}=0$.  Some further calculations show that we have a
point $P_{0}$ given by
\[
P_{0}=\Bigl(-3s-9(uv+2)^{2},
(2\omega+1)^{3}\bigl(3(uv+2)s+4(u^3-1)(v^3-1)+36(u^2v^2+uv+1)\bigr)\Bigr).
\]
The actions of $\theta_{u}$ and $\theta_{v}$ do not leave
$M(s_{3,0})=R_s^{(3)}(\kbar(s))$ invariant, but
$\bar\theta_{u}:=\sigma^{4}\circ\theta_{u}$ and
$\bar\theta_{v}:=\sigma^{2}\circ\theta_{v}$ do. In fact, it is easily
verified that the action of $\bar\theta_u$ and $\bar\theta_v$ on
$F^{(3)}$ fix $s$. On the other hand, $\rho_{u}$ and $\rho_{v}$ leave
$M(s_{3,0})$ invariant; they take $s$ to $(2\omega+1)^2/(u-1)^2 s$ and
$(2\omega+1)^2/(v-1)^2$ respectively.  The actions of
$\bar\theta_{u}$, $\bar\theta_{v}$, $\rho_{u}$ and $\rho_{v}$ on
$M(s_{3,0})$ are given as follows.
\begin{align*}
\bar\theta_{u}\big(X(s,u,v),Y(s,u,v)\big)
&=\big(\omega^{2}X(s,\omega^{2} u,v),
Y(s,\omega^{2} u,v)\big)
\\
\bar\theta_{v}\big(X(s,u,v),Y(s,u,v)\big)
&=\big(\omega^{2}X(s,u,\omega^2 v),Y(s,u,\omega^2 v)\big)
\\
\rho_{u}\big(X(s,u,v),Y(s,u,v)\big)
&=\left(
\frac{(u-1)^{2}}{(2\omega+1)^{2}}
X\Bigl(\frac{(2\omega+1)^{2}}{(u-1)^{2}}s,\frac{u+2}{u-1},v\Bigr), \right. \\
& \qquad \left. -\frac{(u-1)^{3}}{(2\omega+1)^{3}}
Y\Bigl(\frac{(2\omega+1)^{2}}{(u-1)^{2}}s,\frac{u+2}{u-1},v\Bigr)
\right)
\\
\rho_{v}\big(X(s,u,v),Y(s,u,v)\big)
&=\left(
\frac{(v-1)^{2}}{(2\omega+1)^{2}}
X\Bigl(\frac{(2\omega+1)^{2}}{(v-1)^{2}}s,u,\frac{v+2}{v-1}\Bigr), \right. \\
& \qquad \left. -\frac{(v-1)^{3}}{(2\omega+1)^{3}} 
Y\Bigl(\frac{(2\omega+1)^{2}}{(v-1)^{2}}s,u,\frac{v+2}{v-1}\Bigr)
\right) 
\end{align*}

\begin{proposition}\label{prop:M(s30)}
The group of sections $M(s_{3,0})=R_s^{(3)}(\bar k(s))\subset
F^{(3)}_{E_{u},E_{v}}(\bar k(t_{3}))$ is invariant under the
automorphisms $\bar\theta_{u}$, $\bar\theta_v$, $\rho_{u}$, $\rho_v$.
The group $\<\bar\theta_{u},\bar\theta_v,\rho_{u},\rho_v\>$ acts on
the lattice $M(s_{3,0})$, and $M(s_{3,0})$ is generated by
\[
P_{0}, \quad \bar\theta_{u}(P_{0}), \quad  (\bar\theta_{u} \rho_{u} )(P_{0}), \text{ and }\
(\rho_{u}^{-1}\bar\theta_{u}\rho_{u})(P_{0}).
\]
Furthermore, the height pairing matrix with respect to these sections
is given by
\[
\frac{1}{2}\left(\begin{array}{*4r}
2 & -1 & -1 & -1 \\
-1 & 2 & 0 & 0 \\
-1 & 0 & 2 & 0 \\
-1 & 0 & 0 & 2
\end{array}\right).
\]
All the twenty-four sections of minimal height $1$ are obtained from
$P_{0}$ by the automorphism group $\<\bar\theta_{u},\rho_{u}\>$.
\end{proposition}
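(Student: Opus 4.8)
The plan is to leverage the structural fact recorded just above the statement: $E_s$ is a rational elliptic surface whose only reducible fiber is the $\I_0^*$ at $s=\infty$, so it is surface number 9 in the Oguiso--Shioda classification and its Mordell--Weil lattice is $D_4^*$, of rank $4$, minimal norm $1$, determinant $1/4$, and containing exactly $24$ vectors of norm $1$. Granting this, the proposition breaks into three tasks: show the four automorphisms preserve $M(s_{3,0})$ and act on it by isometries; verify that the four listed sections have the stated Gram matrix and hence form a basis; and identify the full set of minimal vectors. Throughout I would first check by direct substitution that $P_0$ genuinely satisfies the Weierstrass equation of $E_s$, which is a routine calculation.

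For the invariance, I would use that $\bar\theta_u,\bar\theta_v$ fix $s$ while $\rho_u,\rho_v$ send $s$ to the scalar multiples $(2\omega+1)^2/(u-1)^2\,s$ and $(2\omega+1)^2/(v-1)^2\,s$ recorded above. Thus each of these automorphisms carries a section whose coordinates are rational in $s$ (over $\bar k(u,v)$) to another such section, so $M(s_{3,0})=E_s(\bar k(s))$ is stable. Moreover the displayed formulas show that each automorphism acts $\bar k$-linearly on $(X,Y)$, hence fixes the zero section $O$ while preserving the fibration; therefore each induces an isometry of $\MW(E_s)$ for the height pairing. Consequently $\<\bar\theta_u,\bar\theta_v,\rho_u,\rho_v\>$ acts on the lattice $M(s_{3,0})$ by isometries, and applying the explicit formulas to $P_0$ produces the three remaining generators.

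The core of the second task is a height computation via Shioda's formula
\[
\<P,Q\> = \chi + (P\cdot O) + (Q\cdot O) - (P\cdot Q) - \sum_w \mathrm{contr}_w(P,Q),
\]
with $\chi=1$, where the only local term comes from the $\I_0^*$ fiber at $s=\infty$ (whose self-contribution lies in $\{0,1\}$). Since $P_0=\bigl(-3s-9(uv+2)^2,\dots\bigr)$ has $X$-coordinate linear in $s$, it does not meet $O$ over finite $s$, so computing its component at $s=\infty$ shows $(P_0\cdot O)=0$ and $\mathrm{contr}_\infty(P_0)=1$, giving height $1$. Because the group acts by isometries, the remaining diagonal entries are automatically $1$ and only a few off-diagonal intersection numbers need to be evaluated; these yield the stated matrix, whose determinant is $1/4$. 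As this equals $\det D_4^*$, the sublattice generated by the four sections has index $1$ in $\MW(E_s)$, so they are a basis.

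Finally, for the last assertion I would show that the orbit of $P_0$ under the order-$24$ group $\<\bar\theta_u,\rho_u\>\cong\SL_2(\F_3)$ consists of $24$ distinct height-$1$ sections; since $D_4^*$ has exactly $24$ vectors of norm $1$, this orbit must be the complete set of minimal vectors. I expect this last step to be the main obstacle: one has to rule out coincidences in the orbit, equivalently show that the stabilizer of $P_0$ in $\<\bar\theta_u,\rho_u\>$ is trivial. I would establish this by tracking the explicit action on the coordinates of $P_0$ given above, using that $\rho_u^2$ acts as $[-1]$ (so $P_0$ and $-P_0$ already lie in the orbit) and that the distinct powers of $\bar\theta_u$ together with $\rho_u$-conjugation separate the remaining images; a clean alternative is to map everything to the reduced $3$-torsion action of $\SL_2(\F_3)$ on $\F_3^2$ and verify freeness there.
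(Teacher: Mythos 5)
Your proposal is correct and follows essentially the same route as the paper: both rely on the Oguiso--Shioda identification of $\MW(E_s)$ with $D_4^*$ (rank $4$, determinant $1/4$, exactly $24$ minimal vectors of norm $1$), verify invariance of $s_{3,0}$ under the order-$24$ group $\langle\bar\theta_u,\rho_u\rangle$, compute that the orbit of $P_0$ consists of $24$ distinct height-$1$ sections, and select four of them whose Gram matrix has determinant $1/4$ so that they generate the full lattice. The paper's proof is just a terser statement of the same verification-by-calculation plan, so your additional detail on the height formula and the isometry argument is a faithful expansion rather than a different method.
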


\begin{proof}
First of all, we verify that $s_{3,0}$ is invariant under the action
of the group $\<\bar\theta_{u},\rho_{u}\>$.  We then check that the
order of the group $\<\bar\theta_{u},\rho_{u}\>$ is~$24$.  Then, by
calculation, we verify that the orbit of $P_{0}$ contains $24$
different sections.  We then choose four sections whose height matrix
coincides with the desired form.
\end{proof}

\begin{remark}
The $X$-coordinates of four sections above are as follows.
\begin{align*}
X(P_{0}) &= -3s-9(uv+2)^2, \\
X(\bar\theta_{u}(P_{0})) &= -3\omega^2 s-9(uv+2\omega)^2 , \\
X\bigl((\bar\theta_{u} \rho_{u} )(P_{0})\bigr)
&=-3\omega^2 s +3(uv + 2u + 2\omega v - 2\omega)^{2} ,\\
X\bigl((\rho_{u}^{-1}\bar\theta_{u}\rho_{u})(P_{0})\bigr)
& =-3\omega^2 s +3(uv+ 2\omega^2 u + 2\omega^2 v -2\omega)^{2} .
\end{align*}
The $Y$-coordinates are a little more complicated and we omit them
here\footnote{Any explicitly omitted expressions may be found in the
  auxiliary computer files.}.
\end{remark}

\subsection{$F^{(3)}(\kbar(t_{3}))$ in the generic case}
If we consider $E_{u}$ and $E_{v}$ as universal curves with
independent variables $u$ and $v$, they are not isogenous curves.  Looking
at the diagram at the end of \S2, $F^{(3)}(\bar k(t_{3}))$ contains
four sublattices, $F^{(1)}(\bar k(t_{1}))$, $M(s_{3,0})$, $M(s_{3,1})$
and $M(s_{3,2})$.  In our case $F^{(1)}(\bar k(t_{1}))$ is trivial.

\begin{lemma}
The lattice $M(s_{3,1})$ is the image of $M(s_{3,0})$ by the
automorphism $\sigma^{2}$ of $F^{(3)}(\bar k(t_{3}))$, and
$M(s_{3,2})$ is the image of $M(s_{3,0})$ by $\sigma^{4}$.  In
particular $M(s_{3,0})$, $M(s_{3,1})$ and $M(s_{3,2})$ are all
isomorphic.
\end{lemma}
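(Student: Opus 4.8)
The plan is to exploit the fact that $\sigma^{2}$ and $\sigma^{4}=(\sigma^{2})^{2}$ are automorphisms of the elliptic surface $F^{(3)}$ — coming from the $D_{6}$-action of \eqref{D6-action} restricted to $F^{(3)}$ — which cyclically permute the three parameters $s_{3,0}$, $s_{3,1}$, $s_{3,2}$, and hence the corresponding subgroups of sections.

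First I would record how $\sigma^{2}$ acts. By \eqref{D6-action} and the explicit action of $\sigma^{2}$ on sections recorded above (just before Proposition~\ref{prop:M(s30)}), $\sigma^{2}$ fixes the coordinates $X,Y$ and acts on the base by $t_{3}\mapsto\omega^{2}t_{3}$; correspondingly it acts on a section by the substitution $t_{3}\mapsto\omega^{2}t_{3}$ in its coordinate functions. Since it preserves the zero section and permutes the fibers, $\sigma^{2}$ induces an automorphism of $F^{(3)}(\bar k(t_{3}))$ that respects the height pairing, i.e.\ an isometry of the Mordell-Weil lattice; the same holds for $\sigma^{4}$.

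Next I would compute the effect of the substitution $t_{3}\mapsto\omega^{2}t_{3}$ on $s_{3,i}=4\bigl((u^{3}-1)t_{3}+(v^{3}-1)/(\omega^{i}t_{3})\bigr)$. A direct calculation gives $s_{3,i}\mapsto \omega^{2}s_{3,i+1}$ with indices taken modulo~$3$; in particular $\sigma^{2}$ sends $s_{3,0}$ to $\omega^{2}s_{3,1}$, and $\sigma^{4}$ sends $s_{3,0}$ to $\omega\,s_{3,2}$. Because $\omega\in\bar k$, the subfield $\bar k(s_{3,0})\subset \bar k(t_{3})$ is carried by $\sigma^{2}$ isomorphically onto $\bar k(s_{3,1})$, and by $\sigma^{4}$ isomorphically onto $\bar k(s_{3,2})$.

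To conclude, recall that $M(s_{3,i})$ consists of the sections of $F^{(3)}$ whose coordinates lie in $\bar k(s_{3,i})$. Since $\sigma^{2}$ acts on coordinate functions through the field isomorphism $\bar k(s_{3,0})\to\bar k(s_{3,1})$, a section $P$ lies in $M(s_{3,0})$ if and only if $\sigma^{2}\!\cdot P$ lies in $M(s_{3,1})$; hence $\sigma^{2}\bigl(M(s_{3,0})\bigr)=M(s_{3,1})$, and likewise $\sigma^{4}\bigl(M(s_{3,0})\bigr)=M(s_{3,2})$. As $\sigma^{2}$ and $\sigma^{4}$ are lattice isometries of $F^{(3)}(\bar k(t_{3}))$, they restrict to isometries between these sublattices, so $M(s_{3,0})$, $M(s_{3,1})$ and $M(s_{3,2})$ are isomorphic. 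The only point needing care is the bookkeeping of the third roots of unity and the convention for the action on sections, so as to land on $s_{3,i+1}$ with the correct scalar; this does not affect the conclusion, since $\bar k(\lambda s_{3,j})=\bar k(s_{3,j})$ for every nonzero $\lambda\in\bar k$.
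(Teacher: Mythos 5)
Your proposal is correct and follows essentially the same route as the paper: the key step in both is the computation that the substitution $t_{3}\mapsto\omega^{2}t_{3}$ carries $s_{3,0}$ to the unit multiple $\omega^{2}s_{3,1}$ (the paper records this as $s_{3,1}=\omega\,\sigma^{2}(s_{3,0})$), so that $\sigma^{2}$ identifies $\bar k(s_{3,0})$ with $\bar k(s_{3,1})$ and hence $M(s_{3,0})$ with $M(s_{3,1})$, and similarly for $\sigma^{4}$. The paper additionally spells out the identity $s^{3}-3s=t^{3}+1/t^{3}=s_{1}^{3}-3\omega^{2}s_{1}$ to exhibit $M(s_{3,1})$ explicitly as the Mordell--Weil lattice of the rational elliptic surface obtained from \eqref{E_s} by the substitution $s\mapsto\omega^{2}s_{3,1}$, but this is the same observation packaged at the level of Weierstrass equations.
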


\begin{proof}
From the identity
\begin{align*}
s^3 - 3s &= \Bigl(t + \frac{1}{t} \Bigr)^3 - 3\Bigl(t + \frac{1}{t} \Bigr) = t^3 + \frac{1}{t^3} \\
 &= t^3 + \frac{1}{(\omega t)^3} = \Bigl(t + \frac{1}{\omega t} \Bigr)^3 - 3\omega^2\Bigl(t + \frac{1}{\omega t} \Bigr) = s_1^3 - 3\omega^2 s_1,
\end{align*}
we see that the lattice $M(s_{3,1})$ is the Mordell-Weil lattice of
the elliptic curve given by the equation obtained by replacing $s$ by
$\omega^{2}s_{3,1}$ in \eqref{E_s}.  The assertion is now clear from
\[
s_{3,1} = 4\omega\left( (u^3-1)\omega^2 t_{3}+ \frac{(v^3-1)}{\omega^2
  t_{3}} \right) = \omega\sigma^{2}(s_{3,0}).
\] 
Similarly for $M(s_{3,2})$. 
\end{proof}

\begin{theorem}\label{th:F^(3)}
Let $u,v \in \kbar$ be such that $E_u$ and $E_v$ are not
isogenous. The Mordell-Weil group $F^{(3)}_{E_{u},E_{v}}(\bar
k(t_{3}))$ for the elliptic curve $F^{(3)}_{E_{u},E_{v}}$ over
$k(u,v)$ defined by \eqref{F^(3)} is generated by $M(s_{3,0})$ and
$M(s_{3,1})$.  As a lattice, $F^{(3)}_{E_{u},E_{v}}(\bar k(t_{3}))$ is
generated by
\begin{gather*}
(\rho_{u}^{-1}\theta_{u}\rho_{u})(P_{0})
\quad  (\theta_{u}\rho_{u})(P_{0}), \quad \theta_{u}(P_{0}), 
\quad P_{0}, \\
\sigma^{2}(P_{0}), \quad (\sigma^{2}\theta_{u})(P_{0}), 
\quad (\sigma^{2}\theta_{u}\rho_{u})(P_{0}), \text{ and }\
(\sigma^{2}\rho_{u}^{-1}\theta_{u}\rho_{u})(P_{0})
\end{gather*}
with the height pairing matrix
\[
\frac{1}{2}
\left( 
\begin {array}{*8r} 
4&0&0&1&-2&0&0&-2\\ 
0&4&0&1&-2&0&-2&0\\
0&0&4&1&-2&-2&0&0\\
1&1&1&4&-2&1&1&1\\
-2&-2&-2&-2&4&1&1&1\\ 
0&0&-2&1&1&4&0&0\\
0&-2&0&1&1&0&4&0\\
-2&0&0&1&1&0&0&4
\end {array} \right).
\]
\end{theorem}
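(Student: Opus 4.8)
The plan is to show that the eight listed sections are linearly independent of the correct total height, and that they generate the full Mordell-Weil group. By Proposition~\ref{prop:Fn-MWrank}, in the generic (non-isogenous) case $\rank F^{(3)}_{E_u,E_v}(\bar k(t_3)) = 8$, so it suffices to exhibit eight independent sections whose span is saturated. First I would establish that $F^{(3)}(\bar k(t_3))$ is generated by $M(s_{3,0})$ and $M(s_{3,1})$. Looking at the sublattice diagram at the end of \S2, $F^{(3)}(\bar k(t_3))$ contains the four sublattices $F^{(1)}(\bar k(t_1))$, $M(s_{3,0})$, $M(s_{3,1})$, $M(s_{3,2})$; since $E_u$ and $E_v$ are non-isogenous, $F^{(1)}(\bar k(t_1))$ is trivial, and by the preceding Lemma the three $M(s_{3,i})$ are all isomorphic images of one another under powers of $\sigma^2$. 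Because $M(s_{3,2}) = \sigma^4(M(s_{3,0})) = (\sigma^2)^2(M(s_{3,0}))$ is generated by images of $M(s_{3,0})$ and $M(s_{3,1})$ under the group action, it contributes nothing new to the span, so $M(s_{3,0})$ and $M(s_{3,1})$ together generate the whole group.

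Next I would identify the explicit generators. By Proposition~\ref{prop:M(s30)}, the rank-$4$ lattice $M(s_{3,0}) \cong D_4^*$ is generated by $P_0$, $\bar\theta_u(P_0)$, $(\bar\theta_u\rho_u)(P_0)$, and $(\rho_u^{-1}\bar\theta_u\rho_u)(P_0)$. Since $\bar\theta_u = \sigma^4 \circ \theta_u$ and $\sigma$ fixes $M(s_{3,0})$ up to the reindexing worked out in the preceding Lemma, each $\bar\theta_u$ may be replaced by $\theta_u$ without changing the lattice generated; this is how the first four listed generators $(\rho_u^{-1}\theta_u\rho_u)(P_0)$, $(\theta_u\rho_u)(P_0)$, $\theta_u(P_0)$, $P_0$ arise. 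Applying $\sigma^2$ to these four yields the corresponding generators of $M(s_{3,1}) = \sigma^2(M(s_{3,0}))$, namely the last four listed sections.

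It then remains to compute the $8\times 8$ height pairing matrix and verify it matches the stated matrix. The diagonal entries follow from the height formula applied to each section, using that the singular fibers are $2\,\I_0^*$ and $12\,\I_1$ (from the table) so that the only local correction terms come from the two $\I_0^*$ fibers at $t_3 = 0, \infty$. The within-block $4\times 4$ sub-blocks are determined by the $D_4^*$ pairing on $M(s_{3,0})$ from Proposition~\ref{prop:M(s30)}; the cross-block entries between $M(s_{3,0})$ and $M(s_{3,1})$ are the new computations, obtained by evaluating the height pairing $\langle P, \sigma^2 Q\rangle$ for the generators $P, Q$ of $M(s_{3,0})$. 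I expect the main obstacle to be exactly these cross-block height computations: one must compute $\langle P_i, \sigma^2 P_j\rangle$ for all pairs, which requires either explicit intersection numbers of the sections with one another (tracking contributions at both $\I_0^*$ fibers and at the points where the sections meet) or a direct evaluation using the explicit $X,Y$-coordinates together with the height pairing formula. Finally, to conclude that these eight sections generate the full group rather than a finite-index sublattice, I would check that the determinant of the computed $8\times 8$ height matrix equals $\det T(F^{(3)})/\bigl(\det T\cdot |\MW_{\mathrm{tor}}|^2\bigr)$ as predicted by Theorem~\ref{structure-theorem1-shioda} (with $\MW(F^{(3)})$ torsion-free in the generic case), so that no further saturation is possible.
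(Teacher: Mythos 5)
Your proposal ultimately rests on the same step as the paper's proof: compute the $8\times 8$ height pairing matrix of the four generators of $M(s_{3,0})$ from Proposition~\ref{prop:M(s30)} together with their images under $\sigma^{2}$, and check that its determinant equals $3^{4}/2^{4}$, the discriminant of the full Mordell--Weil lattice as given by Shioda; since the lattice is torsion-free in the generic case, a full-rank sublattice with the correct discriminant must be everything. That final discriminant comparison is the entire content of the paper's proof, and your last step reproduces it, so the argument goes through.

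Two cautions. First, your opening paragraph does not actually prove that $M(s_{3,0})$ and $M(s_{3,1})$ generate $F^{(3)}(\bar k(t_{3}))$. Knowing that the group \emph{contains} the sublattices $M(s_{3,0})$, $M(s_{3,1})$, $M(s_{3,2})$, and that these are permuted by $\sigma^{2}$, says nothing about whether their sum has finite index $>1$: compare $F^{(6)}$, where the sum of the sublattices pulled back from $F^{(3)}$ and its twist has index $16$ and the half-sections $S_{1},\dots,S_{4}$ must be adjoined. Likewise, ``$M(s_{3,2})$ is an image of $M(s_{3,0})$ under the group action'' does not place it inside the span of $M(s_{3,0})$ and $M(s_{3,1})$. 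The only valid justification for the first assertion of the theorem is the determinant check you perform at the end, so that paragraph should be presented as a consequence, not as a preliminary reduction. Second, the substitution of $\theta_{u}$ for $\bar\theta_{u}$ is not because ``$\sigma$ fixes $M(s_{3,0})$ up to reindexing'': since $\theta_{u}=\sigma^{2}\circ\bar\theta_{u}$, the section $\theta_{u}(P_{0})$ lies in $\sigma^{2}\bigl(M(s_{3,0})\bigr)=M(s_{3,1})$, not in $M(s_{3,0})$. This is harmless for the proof --- one simply computes the height matrix of the eight listed sections directly --- but the justification as written is incorrect.
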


\begin{proof}
We calculate the height pairing matrix with respect to the basis of
$M(s_{3,0})$ in Proposition~\ref{prop:M(s30)} together with it image
under $\sigma^{2}$, and verify that its determinant equals
$3^{4}/2^{4}$, which coincides with the value given in
\cite{Shioda:sphere-packing}. 
\end{proof}
\begin{remark}
The $x$- and $y$-coordinates for this basis of sections may be easily
computed from the group action, starting with $P_0$. We do not list
them here for brevity, but they may be found in the auxiliary
files. The same holds for any complicated or lengthy expressions
suppressed in the body of the text.
\end{remark}

\begin{corollary}
Let $E_{1}$ and $E_{2}$ be elliptic curves over $k$.  Suppose $E_{1}$
and $E_{2}$ are not isogenous.  Then, the Mordell-Weil lattice
$F^{(3)}_{E_{1},E_{2}}(\bar k(t_{3}))$ is defined over
$k(E_{1}[3],E_{2}[3])$, the field over which all the $3$-torsion
points of $E_{1}$ and $E_{2}$ are defined.
\end{corollary}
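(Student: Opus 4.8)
The plan is to deduce the corollary from the explicit universal description in Theorem~\ref{th:F^(3)} by a specialization argument, whose crux is that the curves $E_u$ constitute a model of the universal elliptic curve with full level-$3$ structure, so that each $E_i$ can be realized as a fiber $E_{u_0}$ over a point $u_0$ already lying in its $3$-torsion field. As a first step I would record that every generator produced in Theorem~\ref{th:F^(3)} is defined over $k(\omega)(u,v)$: the base point $P_0$ has coordinates that are rational functions of $t_3$, $u$, $v$ with coefficients in $k(\omega)$, and each operator $\sigma^2$, $\theta_u$, $\rho_u$ used to generate the basis acts by substitutions and rescalings whose coefficients again lie in $k(\omega)$ (they involve only $\omega$, $2\omega+1$, and rational functions of $u,v$). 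Hence the whole lattice $F^{(3)}_{E_u,E_v}(\kbar(t_3))$ is spanned by sections with coordinates in $k(\omega)(u,v)(t_3)$.

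Next I would invoke the modular interpretation from Lemma~\ref{lemma:G-action-on-E(3)}: the family $E_u$ together with the tautological basis $(T_1,T_2)$ of its $3$-torsion is a model over $k(\omega)$ of the universal elliptic curve with full level-$3$ structure, $u$ being a Hauptmodul for the modular curve $Y(3)$. Since the level-$3$ moduli problem is rigid (an elliptic curve has no nontrivial automorphism fixing its full $3$-torsion), $Y(3)$ is a fine moduli space. Now fix $E_1,E_2$ over $k$ and set $K=k(E_1[3],E_2[3])$, noting $\omega\in K$ by the Weil pairing. Over $K$ the curve $E_1$ has all of its $3$-torsion rational, so a choice of symplectic basis defines a $K$-point of $Y(3)$, that is, a value $u_0\in K$, and by rigidity an isomorphism $E_1\isom E_{u_0}$ defined over $K$ carrying the chosen basis to $(T_1,T_2)|_{u=u_0}$; since $u_0$ depends only on $E_1$ with its level structure, in fact $u_0\in k(E_1[3])$. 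Likewise we obtain $v_0\in k(E_2[3])$ with $E_2\isom E_{v_0}$ over $K$. These isomorphisms induce an isomorphism $F^{(3)}_{E_1,E_2}\isom F^{(3)}_{E_{u_0},E_{v_0}}$ over $K$, compatibly with the rescaling invariance of the $F^{(n)}$ recorded after \eqref{change-of-var}.

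Finally I would specialize. Substituting $u=u_0$, $v=v_0$ into the universal generators from the first paragraph yields sections of $F^{(3)}_{E_{u_0},E_{v_0}}$ with coordinates in $k(\omega)(u_0,v_0)(t_3)\subseteq K(t_3)$, hence defined over $K$. Because $E_1$ and $E_2$ are non-isogenous we have $h=0$ and $j(E_1)\neq j(E_2)$, so Proposition~\ref{prop:Fn-MWrank} gives Mordell--Weil rank $8$, matching the universal generic rank, and the group is torsion-free by Theorem~\ref{structure-theorem1-shioda}. Thus the specialization homomorphism is injective and height-preserving, and since source and target share the discriminant computed in Theorem~\ref{th:F^(3)} it is a lattice isomorphism. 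Transporting back along $F^{(3)}_{E_1,E_2}\isom F^{(3)}_{E_{u_0},E_{v_0}}$, we conclude that $F^{(3)}_{E_1,E_2}(\kbar(t_3))$ is generated by sections defined over $K=k(E_1[3],E_2[3])$.

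The main obstacle is the middle step: justifying that each $E_i$ can be put in the form $E_{u_0}$ with both $u_0$ and the isomorphism defined already over the $3$-torsion field $k(E_i[3])$, rather than over a larger field forced by a twist. This is exactly where one must use the rigidity/fine-moduli property of level-$3$ structures (equivalently, the content of Lemma~\ref{lemma:G-action-on-E(3)}, that the $\SL_2(\F_3)$-action on the $3$-torsion is the full standard one), together with the fact that $\omega$, and hence the field of definition of the Hauptmodul $u$, is already contained in $k(E_i[3])$.
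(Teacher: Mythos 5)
Your proposal is correct and follows essentially the same route as the paper's own proof, which likewise realizes $E_1$ and $E_2$ over $K=k(E_1[3],E_2[3])$ as members $E_{u_0}$, $E_{v_0}$ of the universal level-$3$ family and then invokes Theorem~\ref{th:F^(3)} together with the fact that $\omega\in K$ by the Weil pairing. The paper states this in three lines, while you additionally spell out the fine-moduli/rigidity justification and the injectivity of the specialization map, details the authors leave implicit.
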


\begin{proof}
Over the field $K = k(E_{1}[3],E_{2}[3])$, $E_{1}$ and $E_{2}$ are
isomorphic to $E_{u}$ and $E_{v}$ for suitable choices of $u$ and $v$,
respectively.  By Theorem~\ref{th:F^(3)}, the Mordell-Weil lattice
$F^{(3)}_{E_{u},E_{v}}(\bar k(t_{3}))$ is defined over $K(\omega)$,
but $\omega$ is contained in $K$ by the Weil pairing.  Thus,
$F^{(3)}_{E_{1},E_{2}}(\bar k(t_{3}))$ is defined over $K$.
\end{proof}

\section{Mordell-Weil group of $F^{(4)}$} \label{sec:F4}

\subsection{Elliptic modular surface associated with $\Gamma(4)$}
The elliptic modular surface over the modular curve $X(4)$ is given by
\begin{equation}\label{eq:X(4)}
E_{\sigma}:y^{2}=x\bigl(x+(\sigma^{2}+1)^{2}\bigr)\bigl(x+(\sigma^{2}-1)^{2}\bigr).
\end{equation}
(cf. \cite{Shioda:1973}).  The subgroup of $4$-torsion points are
generated by
\[
(-\sigma^4+1, 2\sigma^4-2) \text{ and }
(-(\sigma^{2}-1)(\sigma+i)^2, -2\sigma(\sigma^{2}-1)(\sigma+i)^2),
\]
where $i=\sqrt{-1}$.
The $j$-invariant of \eqref{eq:X(4)} is given by
\[
j=\frac{16(\sigma^8+14\sigma^4+1)^3}
{\sigma^{4}(\sigma^4-1)^{4}}.
\]
Let 
\[
\theta = \begin{pmatrix} 1 & 1 \\ 0 & 1 \end{pmatrix} \textrm{ and } \rho = \begin{pmatrix} 0 & 1 \\ -1 & 0 \end{pmatrix}.
\]
be generators for $G = \SL_2(\Z/4\Z)$. Consider the following
representation $\pi$ of $G$ on the projective line $\Proj^1_\sigma$ by
fractional linear transformations, which factors through
$\PSL_2(\Z/4\Z)$.
\[
\pi(\theta):\sigma \mapsto i \sigma ,\quad \pi(\rho):\sigma \mapsto \frac{\sigma+1}{\sigma-1}.
\]
The $j$-invariant is invariant under the action of $\pi(\SL_2(\Z/4\Z))$.

We take two copies of the elliptic modular surface \eqref{eq:X(4)}:
\begin{align*}
E_{u}:y_{1}^2 = x_{1}\bigl(x_{1}+(u^{2}+1)^{2}\bigr)
\bigl(x_{1}+(u^{2}-1)^{2}\bigr),
\\
E_{v}:y_{2}^2 = x_{2}\bigl(x_{2}+(v^{2}+1)^{2}\bigr)
\bigl(x_{2}+(v^{2}-1)^{2}\bigr).
\end{align*}
We then obtain $F^{(4)}_{E_{u},E_{v}}$:
\begin{multline*}
F^{(4)}_{E_{u},E_{v}}:Y^2 = X^3 - 27(u^{8}+14u^{4}+1)(v^{8}+14v^{4}+1)X
\\
+54\Bigl(54u^{4}(u^{4}-1)^{4}t^{4} 
+ (u^{12}-33u^8-33u^4+1)(v^{12}-33v^8-33v^4+1)
+\frac{54v^{4}(v^{4}-1)^{4}}{t^{4}}\Bigr)
\end{multline*}

\subsection{Rational elliptic surfaces}

As before we set 
\[
s_j = i^j u (u^4 - 1) t + \frac{ v (v^4 - 1)}{t}.
\]
where $i = \sqrt{-1}$. Then we may write the equation of $F^{(4)}$ as
\[
R_{s_j}^{(4)}: Y^2 = X^3 - AX + 4(s_j^4 - 4 i^j B s_j^2 + 2 i^{2j} B^2) + C,
\]
where 
\[
\left\{
\begin{aligned}
A &= 27(u^8+14u^4+1)(v^8+14v^4+1) \\
B &= uv(u^4-1)(v^4-1) \\
C &= 54(u^{12}-33u^8-33u^4+1)(v^{12}-33v^8-33v^4+1).
\end{aligned}
\right.
\]
This is a rational elliptic surface over $\Proj^1_{s_j}$, with a $\IV$
fiber over $s_j = \infty$. Generically, this is the only reducible
fiber, and the Mordell-Weil lattice is therefore $E_6^*$. To describe
the sections, note that it suffices to do so for $R_{s_0}^{(4)}$, since
$\big(X(u,v,s_0), Y(u,v,s_0)\big)$ is a section of $R_{s_0}^{(4)}$ if and only
if $\big(X(i^j u, v, s_i), Y(i^j u, v, s_i) \big)$ is a section of
$R_{s_j}^{(4)}$.

There are fifty-four sections of $R_{s_0}^{(4)}$ of minimal height, with
twenty-seven sections intersecting each non-identity component of the
$\IV$ fiber.

To solve for the sections intersecting one of these, we set
\[
X = x_0 + x_1 s, \qquad Y = y_0 + y_1 s + 54 s^2,
\]
(where we have written $s = s_0$ for ease of notation) and substitute
into the Weierstrass equation. It is then easy to solve for the
remaining coefficients. We obtain a basis of sections; we list the
$x$-coordinates here for brevity. (The full sections may be found in
the auxiliary files).
\begin{align*}
x(P_1) &= -18(1+i)(uv + i)s -3(5u^4v^4 - u^4 - v^4 + 24iu^3v^3 - 24u^2v^2 - 24iuv + 5), \\
x(P_2) &= 18(1+i)(uv + i)s -3(5u^4v^4 - u^4 - v^4 + 24iu^3v^3 - 24u^2v^2 - 24iuv + 5), \\
x(P_3) &= 18(1-i)(uv-i)s -3(5u^4v^4 - u^4 - v^4 - 24iu^3v^3 - 24u^2v^2 + 24iuv + 5), \\
x(P_4) &= 18(1-i)(u + iv)s + 3(u^4v^4 - 5u^4 - 5v^4 - 24iu^3v + 24u^2v^2 + 24iuv^3 + 1), \\
x(P_5) &= -18(1+i)(u - iv)s + 3(u^4v^4 - 5u^4 - 5v^4 + 24iu^3v + 24u^2v^2 - 24iuv^3 + 1), \\
x(P_6) &= 3(u^4v^4 + u^4 + v^4 + 6u^4v^2 + 6u^2v^4 - 12u^2v^2 + 6u^2 + 6v^2 + 1).
\end{align*}

\subsection{The Mordell-Weil group in the generic case}

Let $P_1, \dots, P_6$ be the sections obtained from $R_{s_0}^{(4)}$,
and $P'_1, \dots, P'_6$ the corresponding sections from
$R_{s_1}^{(4)}$ (obtained by substituting $i u$ for $u$ in
$P_i$). Together, they do not quite span the whole Mordell-Weil
lattice of $F^{(4)}$. We define
\begin{align*}
Q_1 &= -(P_1 - P_3 + P_4 - P_5 + P_6)/2, \\
Q'_1 &= -(P'_1 - P'_3 + P'_4 - P'_5 + P'_6)/2;
\end{align*}
expressions for these sections may be obtained from the computer
files. Note that they are well-defined by the equation above, since
$\MW(F^{(4)})$ is torsion-free by Theorem
\ref{structure-theorem1-shioda}.

\begin{theorem}\label{th:F4-RES}
Let $u, v \in \kbar$ be such that $E_u$ and $E_v$ are not
isogenous. The sections $P_1, \dots, P_5, Q_1, P'_1, \dots, P'_5,
Q'_1$ form a basis of the Mordell-Weil lattice
$F^{(4)}_{E_{u},E_{v}}(\kbar(t_{4}))$.
\end{theorem}

\begin{proof}
The height pairing matrix of these sections has discriminant
$4^4/3^2$, which is the discriminant of the Mordell-Weil lattice in
the generic case.
\end{proof}

\begin{corollary}
Let $E_{1}$ and $E_{2}$ be elliptic curves over $k$.  Suppose $E_{1}$
and $E_{2}$ are not isogenous.  Then, the Mordell-Weil lattice
$F^{(4)}_{E_{1},E_{2}}(\bar k(t_{4}))$ is defined over
$k(E_{1}[4],E_{2}[4])$, the field over which all the $4$-torsion
points of $E_{1}$ and $E_{2}$ are defined.
\end{corollary}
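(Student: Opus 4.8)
The plan is to mirror exactly the argument used for the corollary after Theorem~\ref{th:F^(3)}. The key observation is that the universal construction $F^{(4)}_{E_u,E_v}$ was built from the elliptic modular surface over $X(4)$, so the parameters $u$ and $v$ are precisely level-$4$ structure parameters: over the field $K = k(E_1[4], E_2[4])$, the curves $E_1$ and $E_2$ acquire full level-$4$ structure, and hence are isomorphic to $E_u$ and $E_v$ for suitable values $u, v \in K$.

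The argument proceeds in a few short steps. First I would note that, by Theorem~\ref{th:F4-RES}, the Mordell-Weil lattice $F^{(4)}_{E_u,E_v}(\bar k(t_4))$ is spanned by the explicit sections $P_1, \dots, P_5, Q_1, P'_1, \dots, P'_5, Q'_1$. Inspecting the displayed $x$-coordinates of the $P_i$ (and recalling that the $Y$-coordinates and the $Q_1$ are rational combinations of these), one sees that every coefficient lies in $k(u, v, i)$, where $i = \sqrt{-1}$. Thus the entire basis, and hence the whole lattice, is defined over $K(i)$. Finally I would invoke the Weil pairing: the $4$th root of unity $i$ is automatically contained in $k(E_1[4])$ (equivalently $k(E_2[4])$), since the Weil pairing $e_4$ on the $4$-torsion takes values in $\mu_4 = \langle i \rangle$ and is surjective onto $\mu_4$ on a basis of $E_1[4]$. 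Therefore $K(i) = K$, and the Mordell-Weil lattice is defined over $K = k(E_1[4], E_2[4])$ as claimed.

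The only mild subtlety, and the step I would be most careful about, is the claim that specializing the universal parameters $u, v$ to their values in $K$ does not enlarge the field of definition beyond what the generic formulas predict. This requires that for non-isogenous $E_1, E_2$ the specialization of the Mordell-Weil lattice remains injective with the generic rank (so that the specialized sections really do form a basis and are genuinely defined over $K(i)$); this is exactly the content of the genericity hypothesis, under which the rank is locally constant and the sections specialize without degeneration. Given this, the passage from the universal family to the specific pair $(E_1, E_2)$ is routine, exactly as in the $F^{(3)}$ case. The remainder is purely bookkeeping on fields of definition of the explicit rational functions, with no essential new obstacle.
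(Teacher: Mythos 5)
Your proposal is correct and is essentially the argument the paper intends: the paper states this corollary without proof precisely because it is verbatim the argument given for the $F^{(3)}$ case (pass to $K=k(E_1[4],E_2[4])$, identify $E_1,E_2$ with $E_u,E_v$ for $u,v\in K$ via the level-$4$ moduli interpretation, observe the explicit basis of Theorem~\ref{th:F4-RES} is defined over $K(i)$, and note $i\in K$ by the Weil pairing). Your added remark about the specialization from the universal family not degenerating in the non-isogenous case is a reasonable point of care but introduces nothing beyond what the paper implicitly uses.
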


\subsection{$F^{(4)}$ as a quartic surface}

The minimal nonsingular model of $F^{(4)}_{E_{u},E_{v}}$ is
isomorphic to the quartic surface defined by
\begin{equation}\label{eq:quartic}
S_{4}:
ZW\bigl(Z+(v^{2}+1)^{2}W\bigr)\bigl(Z+(v^{2}-1)^{2}W\bigr)
=XY\bigl(X+(u^{2}+1)^{2}Y\bigr)\bigl(X+(u^{2}-1)^{2}Y\bigr).
\end{equation}
$F^{(4)}_{E_{u},E_{v}}$ corresponds to the elliptic fibration on
$S_{4}$ defined by the elliptic parameter $t_{4}=Y/W$.  Generically,
this quartic surface contains sixteen lines
(cf.~\cite{Segre,Inose:quartic,Kuwata:quartic}). They are obtained as
the intersection of one of the four planes
\[
X=0, \quad Y=0, \quad X+(u^{2}+1)^{2}Y=0, \quad X+(u^{2}-1)^{2}Y=0
\]
and one of the four planes
\[
Z=0, \quad W=0, \quad Z+(v^{2}+1)^{2}W=0, \quad Z+(v^{2}-1)^{2}W=0.
\]
We identify $S_{4}$ and $F^{(4)}$ by choosing $X=Z=0$ as the
$0$-section. Let $L_{1},\dots,L_{4}$ be four lines defined by
\begin{gather*}
L_{1}:X+(u^{2}+1)^{2}Y=Z+(v^{2}+1)^{2}W=0,\\
L_{2}:X+(u^{2}-1)^{2}Y=Z+(v^{2}-1)^{2}W=0,\\
L_{3}:X+(u^{2}+1)^{2}Y=Z+(v^{2}-1)^{2}W=0,\\
L_{4}:X+(u^{2}-1)^{2}Y=Z+(v^{2}+1)^{2}W=0.
\end{gather*}
By an abuse of notation, we also denote by $L_{i}$ the corresponding
section of $F^{(4)}_{E_{u},E_{v}}$.

$S_{4}$ may be considered as a family of the intersection of two
quadrics. Namely, consider the map $S_4 \to \Proj^1$ given by 
\[
\nu: (X:Y:Z:W) \mapsto \big(XY + (u^2+1)Y^2 : ZW \big)
\]
Over the point $(p:q) \in \Proj^1$, the fiber of $\nu$ is the
intersection of two quadrics
\[
\left\{
\begin{aligned}
&rZW =Y(X+(u^2+1)^2Y),\\
&(Z+(v^{2}+1)^{2}W)(Z+(v^2-1)^2W)=rX(X+(u^2-1)^2Y)\\
\end{aligned}
\right.
\]
where $r = p/q$. The intersection is a curve of genus~$1$ for each
$r$, except the following eight values:
\[
r=
\frac{\pm2i}{(u+1)^{2}}, \quad
\frac{\pm2i}{(u-1)^{2}}, \quad
\frac{\pm2iv^{2}}{(u+1)^{2}}, \quad
\frac{\pm2iv^{2}}{(u-1)^{2}}.
\]
At each of these values of $r$, the intersection degenerates and
becomes a union of two plane conics.

Let $R_{1},\dots,R_{4}$ be one of the plane conics at each of the
values 
\[r=2i/(u+1)^{2}, \quad -2i/(u+1)^{2}, \quad 2i/(u-1)^{2},\quad 2iv^{2}/(u-1)^{2},
\]
respectively.  Similarly, in the family
\[
\left\{
\begin{aligned}
&rZ(Z+(v^2-1)^2W) =(X+(u^2+1)^2Y)(X+(u^2-1)^2Y),\\
&W(Z+(v^{2}+1)^{2}W)=rXY,\\
\end{aligned}
\right.
\]
let $R_{5},R_{6}$ be one of the conics at the value
$r=\pm2i/(v+1)^{2}$, and let $R_{7}$ be one of the conics at
$r=(u+1)^2/(v+1)^2$ in the family
\[
\left\{
\begin{aligned}
&rZ(Z+(v^2-1)^2W) =X(X+(u^2-1)^2Y),\\
&W(Z+(v^{2}+1)^{2}W)=rY(X+(u^2+1)^2Y).\\
\end{aligned}
\right.
\]
Explicit choices for $R_1, \dots, R_7$ are made in the computer
files. Finally, let $R_{8}$ be the section obtained by letting
$u\mapsto iu$ in $R_{1}$.

\begin{theorem}\label{th:F4-quartic}
The sections $L_{1},\dots,L_{4},R_{1},\dots,R_{8}$ form a basis of the
Mordell-Weil lattice $F^{(4)}_{E_{u},E_{v}}(\kbar(t_{4}))$.
\end{theorem}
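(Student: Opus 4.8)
The plan is to follow exactly the strategy used for Theorem~\ref{th:F4-RES}: compute the Gram matrix of the height pairing on the twelve sections $L_1,\dots,L_4,R_1,\dots,R_8$ and verify that its determinant equals $4^4/3^2$. In the generic (non-isogenous) case we have $h=0$, so by Proposition~\ref{prop:Fn-MWrank} the rank of $F^{(4)}_{E_u,E_v}(\kbar(t_4))$ is $12$, matching the number of sections. Since $\MW(F^{(4)})$ is torsion-free by Theorem~\ref{structure-theorem1-shioda}, the twelve sections span a finite-index sublattice $N$ of the full lattice $M$, with $\det N = [M:N]^2 \det M$. As Theorem~\ref{th:F4-RES} already establishes $\det M = 4^4/3^2$, confirming $\det N = 4^4/3^2$ forces $[M:N]=1$, so the sections form a basis. (Nonvanishing of the determinant simultaneously guarantees their linear independence.)

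To compute the height pairing I would apply Shioda's formula on the elliptic K3 surface $F^{(4)}$, for which $\chi(\sO)=2$ and the only reducible fibers are the two $\IV$ fibers over $t_4=0$ and $t_4=\infty$ in the fibration with parameter $t_4=Y/W$. For a $\IV$ fiber the local contribution to $\langle P,P\rangle$ is $2/3$ when $P$ meets a non-identity component and $0$ otherwise, with the analogous off-diagonal contributions for $\langle P,Q\rangle$. The global data entering the formula—the intersection numbers $(L_i\cdot O)$ and $(R_j\cdot O)$ with the zero section $O:X=Z=0$, together with all mutual intersection numbers—can be read off directly from the quartic model \eqref{eq:quartic}. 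For the four lines $L_1,\dots,L_4$ this is immediate, since each $L_i$ is cut out by a pair of explicit planes and its incidence with $O$ and with the fibers $Y=0$ and $W=0$ is transparent.

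The genuinely geometric input, and the step I expect to be the main obstacle, is the intersection behavior of the eight conics $R_1,\dots,R_8$, both among themselves and with the four lines. Each $R_j$ is one of the two plane conics into which the generic genus-one fiber of a quadric-pencil projection $\nu$ degenerates at one of the eight special values of $r$; pinning down which component of each $\IV$ fiber a given conic meets, and computing $(R_j\cdot R_k)$ and $(R_j\cdot L_i)$, requires carefully resolving the incidence of these conics on $S_4$ rather than a routine substitution. The choice of which of the two conics to take at each degenerate value—made in the auxiliary files—must be consistent with producing the target discriminant. Once these intersection data and local fiber contributions are assembled, evaluating the determinant of the resulting $12\times12$ Gram matrix is a mechanical if lengthy calculation, best delegated to the computer files, and it should return the value $4^4/3^2$ demanded above.
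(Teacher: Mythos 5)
Your proposal is correct and follows exactly the paper's argument: the paper's proof of Theorem~\ref{th:F4-quartic} is the one-line observation that, as in Theorem~\ref{th:F4-RES}, it suffices to check that the height pairing matrix of the twelve sections has discriminant $4^4/3^2$ (the known discriminant of the generic Mordell-Weil lattice), with the actual Gram-matrix computation delegated to the auxiliary files. Your elaboration of how the height pairings are obtained from the quartic model and the two $\IV$ fibers is a faithful unpacking of that same verification, not a different route.
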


\begin{proof}
As in Theorem~\ref{th:F4-RES}, it suffices to verify that the height
pairing matrix of these sections has discriminant $4^4/3^2$.
\end{proof}

\begin{remark}
In \cite{Kuwata:quartic}, it is shown that the lines and conics
contained in $S_{4}$ generate a subgroup of finite index in
$\NS(S_{4})$.
\end{remark}

\begin{proposition}
The basis in Theorem~\ref{th:F4-RES} and that of
Theorem~\ref{th:F4-quartic} are related as follows.
\begin{align*}
L_{1} &= - P_{1} + P_{3} - P_{4} + P_{5} - 2 Q_{1},  \\
L_{2} &= - 2 P_{1} - P_{2} + P_{3} - P_{4} + P_{5} - 2 Q_{1}, \\
L_{3} &= - 2 P'_{1} - P'_{2} + P'_{3} - P'_{4} + P'_{5} - 2 Q'_{1}, \\
L_{4} &= - P'_{1} + P'_{3} - P'_{4} + P'_{5} - 2 Q'_{1},\\
R_{1} & = - P_{1} + P_{3} - Q_{1} - P'_{4} - Q'_{1}, \\
R_{2} & = P_{3} - P_{4} - Q_{1} - P'_{1} - P'_{4} + P'_{5} - Q'_{1}, \\
R_{3} & = - P_{1} + P_{3} - P_{4} + P_{5} - Q_{1} - P'_{1} - P'_{2} + P'_{5} - Q'_{1}, \\
R_{4} & = - P_{4} + P_{5} - Q_{1} - P'_{1} - P'_{2} + P'_{3} - Q'_{1}, \\
R_{5} & = P_{2} - P_{4} - Q_{1} - P'_{4} + P'_{5} - Q'_{1}, \\
R_{6} & = - P_{1} + P_{3} - Q_{1} + P'_{2} - P'_{4} - Q'_{1}, \\
R_{7} & = - P_{4} - Q_{1} - 2 P'_{1} - P'_{2} + P'_{3} - P'_{4} + 2P'_{5} - 2Q'_{1}, \\
R_{8} &= - P_{1} - Q_{1} + P'_{3} - P'_{4} - Q'_{1}.
\end{align*}
\end{proposition}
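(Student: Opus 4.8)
The plan is to exploit the fact, recorded in Theorem \ref{structure-theorem1-shioda}, that $\MW(F^{(4)}) = F^{(4)}_{E_u,E_v}(\kbar(t_4))$ is torsion-free; consequently the height pairing is a positive-definite (in particular nondegenerate) quadratic form on the whole group, so an element is determined by its pairings against any basis, and an identity $x = y$ holds if and only if $\langle x-y,\,x-y\rangle = 0$. Write $B = (P_1,\dots,P_5,Q_1,P'_1,\dots,P'_5,Q'_1)$ for the basis of Theorem \ref{th:F4-RES}, with Gram matrix $G$ (already computed there, of determinant $4^4/3^2$). Since the twelve sections $L_1,\dots,L_4,R_1,\dots,R_8$ also form a basis of the same lattice by Theorem \ref{th:F4-quartic}, each of them is a uniquely determined integral combination of the $B_a$; to find it I would compute, for each $L_i$ (resp. $R_j$), the vector $\mathbf{p}$ of pairings $\langle L_i, B_a\rangle$ against the basis and recover the coordinate vector as $G^{-1}\mathbf{p}$, then check that this integer vector is exactly the one asserted in the statement.

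Everything therefore reduces to computing height pairings, for which I would use Shioda's formula: since $F^{(4)}$ is a K3 surface, $\chi(\sO_{F^{(4)}}) = 2$ and
\[
\langle P, Q\rangle = 2 + (P\cdot O) + (Q\cdot O) - (P\cdot Q) - \sum_{v}\mathrm{contr}_v(P,Q),
\]
where $O$ is the zero section and the correction runs only over the two reducible fibers, which are of type $\IV$ at $t_4 = 0$ and $t_4 = \infty$ (see the table in Section \ref{sec:defns}). The intersection numbers $(P\cdot O)$ and $(P\cdot Q)$ are read directly off the explicit Weierstrass coordinates of the sections, while the local terms $\mathrm{contr}_v$ come from the standard $\IV$-fiber table once one knows which component of each $\IV$ fiber a given section meets; for $P_i, Q_1, P'_i, Q'_1$ this is immediate from their formulas, and for the $L_i$ and $R_j$ it is determined by their incidence with the lines and conics in the quartic model $S_4$. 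As an independent consistency check I would verify that the resulting change-of-basis matrix $M$ (whose rows are the recovered coordinate vectors) satisfies $M G M^{\mathsf T} = G'$, where $G'$ is the Gram matrix of $(L_1,\dots,R_8)$, and that $\det M = \pm 1$, as must hold since both sets span the same lattice.

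The main obstacle is pinning down the $R_j$ precisely enough to compute their fiber incidences and, with them, the correct signs in the relations. Each $R_j$ was defined only as \emph{one} of the two plane conics into which an intersection of quadrics degenerates, so before any pairing can be evaluated one must fix the specific conic (as is done in the auxiliary files), express it as a section relative to the chosen zero section $X = Z = 0$, and determine which non-identity component of each $\IV$ fiber it meets; a different choice of conic, or of orientation, changes the section by a sign and by an element of the fiber component group, and this is exactly what distinguishes, for instance, $L_1$ from $L_2$ or $R_1$ from $R_6$ in the list. Once these incidences are fixed, the remaining work---assembling the pairing vectors, inverting $G$, and matching the integer coordinates---is a finite and routine linear-algebra verification carried out in the computer files; the half-integral sections $Q_1, Q'_1$ require no special treatment beyond confirming, via the torsion-freeness already invoked, that the combinations defining them land in the lattice.
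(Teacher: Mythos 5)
Your proposal is correct and amounts to the same thing the paper does: the paper offers no written argument for this proposition and simply defers the verification of these identities among explicitly known sections to the auxiliary computer files, which is exactly the finite height-pairing/change-of-basis computation you describe (recovering each coordinate vector as $G^{-1}\mathbf{p}$ and checking $MGM^{\mathsf T}=G'$ with $\det M=\pm1$ is equivalent to directly computing the combinations via the group law and comparing coordinates). Your observation that one must first fix the specific conic representing each $R_j$ (as is done in the auxiliary files) before any pairing or sign can be pinned down is the right caveat and is precisely where the only non-routine care is needed.
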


\section{Mordell-Weil group of $F^{(5)}$} \label{sec:F5}

\subsection{Elliptic modular surface associated with $\Gamma(5)$}
We begin with the elliptic modular surface over $\Gamma(5)$, which is
described in \cite{Rubin-Silverberg} for instance.

\begin{equation}\label{X(5)}
\begin{aligned}
y^2 &= x^3 - 27 (\mu^{20}-228 \mu^{15}+494 \mu^{10}+228 \mu^5+1) x \\
    & \quad   +54 (\mu^{30}+522 \mu^{25}-10005 \mu^{20}-10005\mu^{10}-522 \mu^5+1).
\end{aligned}
\end{equation} 

This elliptic curve has full $5$-torsion defined over
$\Q(\zeta)(\mu)$, where $\zeta$ is a fifth root of unity. Let $\eta =
\zeta + \zeta^{-1}$, which can be taken to be the golden ratio $(1 +
\sqrt{5})/2$. A basis for the $5$-torsion is given by
\begin{align*}
T_1 &= \big( 3(\mu^{10}+12\mu^8-12\mu^7+24\mu^6+30\mu^5+60\mu^4+36\mu^3+24\mu^2+12\mu+1), \\
& \qquad 108\mu(\mu^4-3\mu^3+4\mu^2-2\mu+1)(\mu^4+2\mu^3+4\mu^2+3\mu+1)^2  \big) \\
T_2 &= \Big( -3((12\eta+19)\mu^{10}+66(2\eta-1)\mu^5-12\eta+31)/5, \\
& \qquad 108(3\zeta^3-\zeta^2+2\zeta+1)\big(\mu^{15}+(-5\eta+19)\mu^{10}+(-55\eta+87)\mu^5+5\eta-8\big)/5 \Big).
\end{align*}
Let 
\[
\theta = \begin{pmatrix} 1 & 1 \\ 0 & 1 \end{pmatrix} \textrm{ and } \rho = \begin{pmatrix} 0 & 1 \\ -1 & 0 \end{pmatrix}.
\]
be generators for $G = \SL_2(\F_5)$, and let $\pi$ be the
representation of $G$ on $\Proj^1_\mu$ as follows.
\[
\pi(\theta):\mu \mapsto \zeta \mu ,\quad \pi(\rho):\mu \mapsto -1/\mu.
\]
The $j$-invariant of \eqref{X(3)} is given by
\[
j=-\frac{(\mu^{20}-228\mu^{15}+494\mu^{10}+228\mu^5+1)^3}{\mu^{5} (\mu^{10}+11\mu^5-1)^5}
\]
and is invariant under the action of the icosahedral group
$\pi(\SL_2(\F_{5})) \cong A_5$.

As before, there is a compatible action of $G$ on the universal
elliptic curve, given by
\[
\theta: (x,y,\mu) \to ( x,y,\zeta \mu) , \qquad \rho: (x,y,\mu) \to
(x/\mu^{10}, y/\mu^{15}, -1/\mu),
\]
and therefore an action on the Mordell-Weil group
$\mathcal{E}_{\Gamma(5)}(k(\mu))$ by $(\gamma P) (\mu) = \gamma \big( P
(\gamma^{-1} \mu) \big)$. On the $5$-torsion sections, we have 
\[
\theta:\left\{\begin{array}{ccl}
T_{1} & \mapsto & T_{1} + T_{2}\\
T_{2} & \mapsto & T_{2}
\end{array}\right.,
\qquad
\rho:\left\{\begin{array}{ccr}
T_{1} & \mapsto & 2T_1\\
T_{2} & \mapsto & -2T_2
\end{array}\right..
\]
This action is conjugate to the usual linear action of $\PSL_2(\F_5)$
(since $-1$ is a square modulo $5$, the usual action of $\rho$ is
diagonalizable).

\subsection{$F^{(5)}$ for universal families}
Now, take two copies of \eqref{X(5)}
\begin{align*}
E_{u} : y_{1}^{2} &= x_1^3 - 27 (u^{20}-228 u^{15}+494 u^{10}+228 u^5+1) x_1 \\
      & \qquad \qquad +54 (u^{30}+522 u^{25}-10005 u^{20}-10005u^{10}-522 u^5+1), \\
E_{v} : y_{2}^{2} &= x_2^3 - 27 (v^{20}-228 v^{15}+494 v^{10}+228 v^5+1) x_2 \\
     & \qquad \qquad +54 (v^{30}+522 v^{25}-10005 v^{20}-10005v^{10}-522 v^5+1).
\end{align*}
Computing $\Ino(E_{u},E_{v})$ by \eqref{eq:F1} and base changing, we get
the following Weierstrass equation for $F^{(5)}_{E_{u},E_{v}}$:
\begin{align}\label{F^(5)}
F^{(5)}_{E_{u},E_{v}}: Y^{2} &= X^{3} -27(u^{20}-228u^{15}+494u^{10}+228u^5+1) \\ \notag
& \qquad \qquad \quad \times (v^{20}-228v^{15}+494v^{10}+228v^5+1)X \\\notag
& \qquad -2^63^6u^5 (u^{10}+11u^5-1)^5 t_5^5  -\frac{2^63^6 v^5 (v^{10}+11v^5-1)^5}{t_5^5} \\\notag
& \qquad + 54(u^{30}+522u^{25}-10005u^{20}-10005u^{10}-522u^5+1) \\\notag
 &  \qquad \qquad \times (v^{30}+522v^{25}-10005v^{20}-10005v^{10}-522v^5+1).
\end{align}
(Here we have scaled $X$ and $Y$ by $9$ and $27$ respectively, to make
the coefficients smaller.)

As before, let $\mathcal{E}_{u}\to \P^{1}_{u}$ and $\mathcal{E}_{v}\to
\P^{1}_{v}$ be elliptic modular surfaces associated with $E_{u}$ and
$E_{v}$ respectively.  Also let $G_u = \< \theta_u, \rho_u \>$ and
$G_v = \< \theta_v, \rho_v \>$ be groups of automorphisms of
$\mathcal{E}_{u}$ and $\mathcal{E}_{v}$ described above, respectively.
We consider \eqref{F^(5)} as the family of elliptic surfaces
$\mathcal{F}^{(5)}_{E_{u},E_{v}}\to \P^{1}_{u}\times \P^{1}_{v}$
parametrized by $u$ and $v$. The total space is a fourfold.

Analogously to \ref{action:fourfold}, we have an action of $G_u \times
G_v$ on this variety, with the corresponding expressions being
somewhat simpler:
\begin{align*}
\theta_u & : (X,Y,t_5, u, v) \to (X,Y, t_5, \zeta u, v) \\
\theta_v & : (X,Y,t_5, u,v) \to (X, Y, t_5, u, \zeta v) \\
\rho_u &: (X, Y, t_5, u,v ) \to \left( \frac{X}{u^{10}}, \frac{Y}{u^{15}}, t_5 u^6, -\frac{1}{u}, v \right) \\
\rho_v &: (X, Y, t_5, u,v ) \to \left( \frac{X}{v^{10}}, \frac{Y}{v^{15}}, \frac{t_5}{v^6}, u, -\frac{1}{v} \right).
\end{align*}

There is also the action of the dihedral group $D_{10}$ on
$F^{(5)}$. Let $D_{10} = \< \sigma, \tau \>$, with $\sigma^5 = 1,
\tau^2 = 1$ and $\sigma \tau = \tau \sigma^{-1}$. Then $\sigma$ acts
by twisting $t_5$ by $\zeta$ and $\tau$ by taking it to $\delta/t$,
where 
\[ \delta = v(v^{10} + 11 v^5 - 1)/\big(u (u^{10} + 11 u^5 - 1) \big).
\] 
Consequently, we have the following result, which describes the action
of these groups on the sections.

\begin{proposition}
  The group $D_{10} \times G_{u}\times G_{v}$ acts on the group of sections $F^{(5)}_{E_{u},E_{v}}\bigl(\bar k(u,v)(t_{5})\bigr)$ as
  follows:
  \begin{align*}
    & \sigma\big(X(t_5,u,v),Y(t_5,u,v)\big)=\big(X(\zeta^{-1} t_5,u,v),Y(\zeta^{-1} t_5,u,v)\big)\\
    &  \tau\big(X(t_5,u,v),Y(t_5,u,v)\big)=\big(X(\delta/t_5,u,v),Y(\delta/t_5,u,v)\big)
    \\
    & \theta_{u}\big(X(t_5,u,v),Y(t_5,u,v)\big)  =\big( X(t_5,\zeta^{-1}u,v),Y(t_5,\zeta^{-1} u,v)\big)
    \\
    & \theta_{v}\big(X(t_5,u,v),Y(t_5,u,v)\big)  =\big( X(t_5,u,\zeta^{-1}v),Y(t_5, u,\zeta^{-1}v)\big)
    \\
    & \rho_{u}\big(X(t_5,u,v),Y(t_5,u,v)\big) = \big( u^{10} X(t_5 u^6, -1/u, v) ,-u^{15} Y( t_5 u^6, -1/u, v) \big)
    \\
    & \rho_{v}\big(X(t_5,u,v),Y(t_5,u,v)\big) = \big( v^{10} X(t_5/ v^6, u, -1/v) ,-v^{15} Y( t_5/ v^6, u, -1/v) \big)
    \\
  \end{align*}
\end{proposition}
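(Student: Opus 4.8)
The plan is to obtain each of the six displayed formulas by specializing the general recipe for how an automorphism of an elliptic surface acts on its group of sections, exactly as in Lemma~\ref{lemma:G-action-on-E(3)} and the Corollary following Proposition~\ref{action:fourfold}. Recall that recipe: if $\gamma$ acts on the total space of $\mathcal{F}^{(5)}_{E_u,E_v}$ compatibly with the fibration, moving a base point $b=(t_5,u,v)$ and rescaling the fibral coordinates $(X,Y)$, then the induced map on a section $s\colon b\mapsto\bigl(X(t_5,u,v),Y(t_5,u,v)\bigr)$ is $\gamma\cdot s=\gamma\circ s\circ(\gamma|_{\mathrm{base}})^{-1}$. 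In practice one substitutes the coordinates of $(\gamma|_{\mathrm{base}})^{-1}(t_5,u,v)$ into $X$ and $Y$ and then applies the fibral part of $\gamma$ to the result. Thus the proposition is a direct consequence of the fourfold action of $G_u\times G_v$ and the $D_{10}$-action on $F^{(5)}$ already displayed above; the only work is careful bookkeeping of inverses.

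I would first dispose of the generators $\sigma,\theta_u,\theta_v$, whose fourfold action fixes $(X,Y)$ and merely twists one variable by $\zeta$, namely $\sigma\colon t_5\mapsto\zeta t_5$, $\theta_u\colon u\mapsto\zeta u$, $\theta_v\colon v\mapsto\zeta v$. The inverse base substitutions $t_5\mapsto\zeta^{-1}t_5$, $u\mapsto\zeta^{-1}u$, $v\mapsto\zeta^{-1}v$ then give the first, third, and fourth formulas immediately, with $(X,Y)$ unchanged. For $\tau$, the base action $t_5\mapsto\delta/t_5$ is an involution fixing $(X,Y)$; using $\delta^5=v^5(v^{10}+11v^5-1)^5/\bigl(u^5(u^{10}+11u^5-1)^5\bigr)$ one checks that it interchanges the two monomials $t_5^{\pm5}$ in \eqref{F^(5)} and preserves the equation, so that substituting $t_5\mapsto\delta/t_5$ (its own inverse) yields the second formula.

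The generators $\rho_u$ and $\rho_v$ are where the care is needed, and I expect the sign-and-exponent bookkeeping here to be the main, if modest, obstacle. For $\rho_u$ the fourfold action is $(X,Y,t_5,u,v)\mapsto(X/u^{10},\,Y/u^{15},\,t_5u^6,\,-1/u,\,v)$. I would first verify that its restriction to the base, $(t_5,u,v)\mapsto(t_5u^6,-1/u,v)$, is an involution, so that the required inverse substitution is this same map and $s$ is evaluated at the source point whose $u$-coordinate is $-1/u$. The crucial point is that the fibral rescaling factors $u^{-10}$ and $u^{-15}$ of $\rho_u$ must be evaluated at this source coordinate: replacing $u$ by $-1/u$ turns them into $u^{10}$ and $-u^{15}$, the sign arising from the odd exponent $15$. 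This reproduces $\rho_u\cdot s=\bigl(u^{10}X(t_5u^6,-1/u,v),\,-u^{15}Y(t_5u^6,-1/u,v)\bigr)$, and the identical computation with $v$ for $u$ and base move $t_5\mapsto t_5/v^6$ gives the $\rho_v$ formula. Finally, that these six maps assemble into an action of the product $D_{10}\times G_u\times G_v$ follows, as for $F^{(3)}$, from the corresponding fact on the fourfold: the three factors govern essentially independent data (the parameter $t_5$ and the level-$5$ structures controlling $u$ and $v$), and the induced maps on sections inherit the relations.
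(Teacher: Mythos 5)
Your proposal is correct and follows exactly the route the paper takes: the paper states this proposition as an immediate consequence of the displayed action of $D_{10}\times G_u\times G_v$ on the fourfold, via the same recipe $\gamma\cdot s=\gamma\circ s\circ(\gamma|_{\mathrm{base}})^{-1}$ used for $F^{(3)}$, and your careful bookkeeping of the involutive base maps and the evaluation of the fibral factors at the source point (producing $u^{10}$, $-u^{15}$ and $v^{10}$, $-v^{15}$) is precisely the omitted verification.
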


\subsection{The rational elliptic surface}

Set 
\[
s = u(u^{10} + 11 u^5 - 1) t_5 + \frac{ v (v^{10} + 11 v^5 - 1)}{t_5}.
\]
Then the equation for $F^{(5)}$ transforms to 
\[
Y^{2} = X^{3} -27 A X  -2^6 3^6 (s^5 - 5 B s^3 + 5 B^2 s) + C, 
\]
where
\[
\left\{
\begin{aligned} A &= (u^{20}-228u^{15}+494u^{10}+228u^5+1)(v^{20}-228v^{15}+494v^{10}+228v^5+1), \\
B &= uv (u^{10}+11u^5-1)(v^{10}+11v^5-1),\\
C &=  54(u^{30}+522u^{25}-10005u^{20}-10005u^{10}-522u^5+1) \\
 & \qquad \qquad \quad  \times (v^{30}+522v^{25}-10005v^{20}-10005v^{10}-522v^5+1).
\end{aligned}
\right.
\]

This equation defines a rational elliptic surface $R^{(5)}_{s}$ over
$K = k(u,v)$, fibered over $\Proj^1_s$, with the property that its
base change to $\Proj^1_t$ is $F^{(5)}$. It has a singular fiber of
additive reduction (generically type $\II$) at $s = \infty$, so using
Shioda's specialization technique, we can readily determine an
equation of degree $240$ whose roots give all the specializations of
the minimal vectors of the Mordell-Weil lattice, which is generically
$E_8$. As before, we obtain an action of $\PSL_2(\F_5)$ on the
sections by the following lemma, whose (easy) proof is omitted.

\begin{lemma}
  The automorphisms $\theta_u \theta_v$ and $\rho_u \rho_v$ induce
  automorphisms of the Mordell-Weil group $R^{(s)}(K(s))$, given by
\begin{align*} 
\theta_u \theta_v& : \big( X(s,u,v), Y(s,u,v) \big)  \mapsto \big( X(\zeta^{-1} s, \zeta^{-1} u, \zeta^{-1} v), Y(\zeta^{-1} s, \zeta^{-1} u, \zeta^{-1} v) \big), \\
\rho_u \rho_v & : \big( X(s,u,v), Y(s,u,v) \big) \mapsto \left((uv)^{10} X \Big(\frac{s}{(uv)^6},-\frac{1}{u},-\frac{1}{v} \Big), (uv)^{15} Y \Big(\frac{s}{(uv)^6},-\frac{1}{u},-\frac{1}{v} \Big) \right).
\end{align*}
\end{lemma}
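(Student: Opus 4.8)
The plan is to show that each of the two composite maps preserves, up to an admissible Weierstrass change of variables, the defining equation of the rational elliptic surface $R^{(5)}_{s}$, and therefore carries sections to sections. Write $\ell(w)=w^{10}+11w^{5}-1$, $p(w)=w^{20}-228w^{15}+494w^{10}+228w^{5}+1$ and $r(w)=w^{30}+522w^{25}-10005w^{20}-10005w^{10}-522w^{5}+1$, so that $A=p(u)p(v)$, $B=uv\,\ell(u)\ell(v)$, $C=54\,r(u)r(v)$, and the elliptic parameter is $s=u\ell(u)\,t_{5}+v\ell(v)/t_{5}$. The first thing I would record is how the generators act on $s$: from the formulas of the preceding proposition, $\theta_{u}\theta_{v}$ fixes $t_{5}$ and sends $(u,v)\mapsto(\zeta u,\zeta v)$, while $\rho_{u}\rho_{v}$ sends $t_{5}\mapsto u^{6}v^{-6}t_{5}$ and $(u,v)\mapsto(-1/u,-1/v)$. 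Only these two combinations (rather than the individual $\theta$'s and $\rho$'s) rescale $s$ cleanly, because separately they mix the two summands $u\ell(u)t_{5}$ and $v\ell(v)/t_{5}$ in incompatible ways; this is precisely why the lemma is phrased in terms of $\theta_{u}\theta_{v}$ and $\rho_{u}\rho_{v}$.

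The computation then rests on a short list of identities for $\ell,p,r$. Since every exponent occurring in $\ell,p,r$ is divisible by $5$, we have $\ell(\zeta w)=\ell(w)$, $p(\zeta w)=p(w)$ and $r(\zeta w)=r(w)$, whereas the prefactor picks up a $\zeta$, namely $w\ell(w)\mapsto \zeta\,w\ell(w)$. Hence $\theta_{u}\theta_{v}$ yields $s\mapsto\zeta s$, $A\mapsto A$, $B\mapsto\zeta^{2}B$, $C\mapsto C$, and using $\zeta^{5}=1$ one checks that $s^{5}-5Bs^{3}+5B^{2}s$ is invariant; thus the Weierstrass equation is fixed with $(X,Y)\mapsto(X,Y)$. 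For the reciprocal map I would verify the palindromy relations $\ell(-1/w)=-w^{-10}\ell(w)$, $p(-1/w)=w^{-20}p(w)$ and $r(-1/w)=w^{-30}r(w)$. These give, under $\rho_{u}\rho_{v}$, the transformations $s\mapsto s/(uv)^{6}$, $A\mapsto A/(uv)^{20}$, $B\mapsto B/(uv)^{12}$ and $C\mapsto C/(uv)^{30}$; consequently $s^{5}-5Bs^{3}+5B^{2}s\mapsto(uv)^{-30}(s^{5}-5Bs^{3}+5B^{2}s)$, and the scaling $(X,Y)\mapsto\bigl((uv)^{10}X,(uv)^{15}Y\bigr)$ (multiplication of the Weierstrass coordinates by $\lambda^{2},\lambda^{3}$ with $\lambda=(uv)^{5}$) exactly restores the equation, the two sign changes coming from $\ell(-1/u)$ and $\ell(-1/v)$ cancelling against those in the $Y$-scalings of $\rho_{u}$ and $\rho_{v}$.

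Having checked invariance of the equation, each map is an automorphism of the elliptic surface $R^{(5)}_{s}$ fixing the zero section, so it induces a group automorphism of the Mordell-Weil group $R^{(5)}_{s}(K(s))$; reading off the pushforward of a section $P=\bigl(X(s,u,v),Y(s,u,v)\bigr)$ as ``apply the fibrewise scaling to $P$ evaluated at the inverse image of the base point'' produces exactly the displayed formulas, with arguments $\zeta^{-1}s,\zeta^{-1}u,\zeta^{-1}v$ in the first case and $s/(uv)^{6},-1/u,-1/v$ in the second. The only real work, and the one place to be careful, is the bookkeeping in the second map: one must track the minus signs in $\ell(-1/w)=-w^{-10}\ell(w)$ and in the odd-degree coefficients of $p$ and $r$, and confirm that the exponents $20,12,30$ and $6$ are compatible with a single Weierstrass scaling by $\lambda=(uv)^{5}$. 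Once these elementary identities are in hand the remainder is immediate, which is why the proof is merely sketched.
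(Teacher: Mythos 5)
The paper omits this proof entirely (``whose (easy) proof is omitted''), and your verification is exactly the intended computation: the invariance of $\ell,p,r$ under $w\mapsto\zeta w$, the (anti)palindromy relations under $w\mapsto -1/w$, and the resulting Weierstrass rescaling by $\lambda=(uv)^5$ all check out, as does the cancellation of the two signs in the $Y$-coordinate. One minor wording point: those two signs come from the odd exponent in $(-1/w)^{15}=-w^{-15}$ appearing in the $Y$-scalings of $\rho_u$ and $\rho_v$ (equivalently, from $\lambda^3$ with $\lambda=w^{-5}\cdot(-1)$-type bookkeeping), rather than ``from $\ell(-1/u)$ and $\ell(-1/v)$'' per se, but this does not affect the correctness of the argument.
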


For the rational elliptic surface $R^{(5)}$, it follows from general
structural results (see \cite{Shioda:MWL}) that the Mordell-Weil
lattice is $E_8$, and is spanned by the $240$ smallest vectors. These
correspond to sections of the form
\[
\big(X(s), Y(s)\big) = ( x_2 s^2 + x_1 s + x_0, y_3 s^3 + y_2 s^2 +
y_1 s + y_0).
\]
The specialization of such a section at $s = \infty$ is $z = x_2/y_3
\in \mathbb{G}_a$. Substituting the above expression for $X$ and $Y$
into the Weierstrass equation, we obtain a system of equations, in
which we can eliminate all variables but $z$. The resulting equation
has degree $240$, and it splits over the field $k(u,v)$ into linear
factors. The variables $x_2, \dots, y_0$ are rational functions of $z$
with coefficients polynomials in the Weierstrass coefficients. As a
result, every section is defined over the field $k(u,v)$. In the
result below, we will just write down the specializations of the
relevant sections (the entire expression can be found in the auxiliary
source files).

\begin{proposition} \label{R5basis}
Let $P_0$ and $Q_0$ be sections whose specializations are given by 
\begin{align*}
z(P_0) &= (\zeta+1)\big(uv - (\zeta^2 + 1)u - \zeta^2(\zeta^2 + 1)v - \zeta^4\big)/6, \\
z(Q_0) &= \zeta^3(\zeta-1)(u - \zeta^3v)/6 .
\end{align*}
Then $P_0, \theta(P_0), \theta^{-1}(P_0), \rho(P_0), \theta^3(P_0),
\theta \rho(P_0), Q_0, \rho(Q_0)$ form a basis of the Mordell-Weil
group of $R^{(5)}_{s}$.
\end{proposition}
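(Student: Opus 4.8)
The plan is to exploit the fact, recalled above from \cite{Shioda:MWL}, that the Mordell-Weil lattice of the rational elliptic surface $R^{(5)}_{s}$ is the unimodular lattice $E_8$, of rank $8$ and minimal norm $2$. Since any section already lies in $\MW(R^{(5)}_{s}) = E_8$, to prove that the eight listed sections form a \emph{basis} it suffices to show that the determinant of their height pairing (Gram) matrix $G$ equals $1$. Indeed, if $\Lambda$ denotes the sublattice they generate, then $\det G = [E_8 : \Lambda]^2 \cdot \det E_8 = [E_8 : \Lambda]^2$, so $\det G = 1$ forces the index to be $1$, i.e. $\Lambda = E_8$. This reduces the proposition to a single determinant computation, exactly as in the proofs of Theorems \ref{th:F4-RES} and \ref{th:F^(3)}.

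First I would reconstruct the full Weierstrass coordinates of $P_0$ and $Q_0$ from the given specializations $z(P_0), z(Q_0)$. As explained just before the statement, substituting the ansatz $X = x_2 s^2 + x_1 s + x_0$, $Y = y_3 s^3 + y_2 s^2 + y_1 s + y_0$ into the Weierstrass equation and eliminating all variables but $z = x_2/y_3$ yields the degree-$240$ equation, whose roots are precisely the specializations of the $240$ minimal vectors; for each root the coefficients $x_2, \dots, y_0$ are recovered as rational functions of $z$ over $k(u,v)$. Thus I would verify that $z(P_0)$ and $z(Q_0)$ are roots of this polynomial, which simultaneously confirms that $P_0, Q_0$ are genuine height-$2$ sections defined over $k(u,v)$ and produces their explicit coordinates. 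The remaining six sections $\theta(P_0), \theta^{-1}(P_0), \rho(P_0), \theta^3(P_0), \theta\rho(P_0)$ and $\rho(Q_0)$ are then obtained by applying the explicit $\PSL_2(\F_5)$-action of the preceding lemma; since that action is by automorphisms of the surface, each image again has height $2$.

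With the sections in hand, I would compute $G$ via Shioda's height formula. Because the unique singular fiber of $R^{(5)}_{s}$, at $s = \infty$, has type $\II$ and therefore contributes no non-identity components, the trivial lattice is just $\Z O + \Z F$ and every local correction term in the height formula vanishes. With $\chi = 1$ this leaves
\[
\langle P, Q\rangle = 1 + (P\cdot O) + (Q\cdot O) - (P\cdot Q), \qquad \langle P, P\rangle = 2 + 2(P\cdot O),
\]
and since a minimal (height-$2$) section must meet the zero section $O$ with multiplicity $0$, this simplifies to $\langle P,P\rangle = 2$ and $\langle P,Q\rangle = 1 - (P\cdot Q)$. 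The intersection numbers $(P_i\cdot P_j)$ are read off from the common zeros in $s$ of $X_i - X_j$ and $Y_i - Y_j$ over $k(u,v)$. Assembling the resulting $8\times 8$ integer matrix and checking $\det G = 1$ completes the argument.

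The main obstacle is the sheer size of the symbolic computation rather than any conceptual difficulty: the Weierstrass coefficients $A, B, C$ are high-degree polynomials in $u$ and $v$, the specializations sit among the $240$ roots of the degree-$240$ equation, and the intersection numbers require gcd/resultant manipulations of bulky rational functions over $k(u,v)$. The delicate point is not linear independence (which already follows from the $A_5 \cong \PSL_2(\F_5)$ orbit structure, giving a nonsingular $G$) but the \emph{unimodularity} — that $\det G$ is exactly $1$ rather than a larger perfect square. This final equality is most reliably checked with a computer algebra system, consistent with the auxiliary files accompanying the paper.
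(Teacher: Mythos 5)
Your proposal is correct and follows essentially the same route as the paper: the paper's proof simply computes the height-pairing matrix of the eight sections, observes that it is even, positive definite, and unimodular of rank~$8$, hence the sections span $E_8$ and therefore the full Mordell-Weil group. Your additional details (recovering the coordinates from the specializations via the degree-$240$ equation, propagating by the group action, and evaluating the pairing with Shioda's height formula with no local correction terms) are exactly the ``direct calculation'' the paper relegates to its auxiliary files.
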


\begin{proof}
By direct calculation, the intersection matrix of these sections is 
\[
\left(\begin{array}{*8c}
2 & 1 & 1 & 0 & 0 & 1 & 1 & 0\\
1 & 2 & 0 & -1 & 0 & 0 & 1 & 0\\
1 & 0 & 2 & 1 & 1 & 1 & 1 & 0\\
0 & -1 & 1 & 2 & 1 & 1 & 0 & 1\\
0 & 0 & 1 & 1 & 2 & 0 & 1 & 0\\
1 & 0 & 1 & 1 & 0 & 2 & 0 & 1\\
1 & 1 & 1 & 0 & 1 & 0 & 2 & 0\\
0 & 0 & 0 & 1 & 0 & 1 & 0 & 2
\end{array}
\right)
\]
Therefore, they span an even unimodular eight-dimensional lattice,
which must be $E_8$. Hence, these sections are a basis of the entire
Mordell-Weil group.
\end{proof}

\subsection{$F^{(5)}(\kbar(t_5))$ in the generic case}

We now describe the Mordell-Weil group of $F^{(5)}$ in the generic
case, when $E_u$ and $E_v$ are not isogenous.

\begin{theorem} \label{th:F^5}
  Let $P_i, i = 1, \dots, 8$ be the basis of Proposition
  \ref{R5basis}. By abuse of notation, we let $P_i$ denote their base
  change to the elliptic K3 surface $F^{(5)}$. Let $u, v \in \kbar$ be
  such that $E_u$ and $E_v$ are not isogenous. Then the sixteen
  sections $P_i, \sigma(P_i)$ form a basis of the Mordell-Weil group
  $F^{(5)}_{E_{u},E_{v}}(\kbar(t_5))$.
\end{theorem}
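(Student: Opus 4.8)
The plan is to proceed exactly as in the proofs of Theorems~\ref{th:F^(3)} and \ref{th:F4-RES}: show that the sixteen sections span a full-rank sublattice of $\MW(F^{(5)})$, compute the determinant of their height pairing matrix, and check that it equals the discriminant of the full lattice, which forces the two to coincide. By Proposition~\ref{prop:Fn-MWrank} the rank is $16$ (here $h=0$), and $\MW(F^{(5)})$ is torsion-free by Theorem~\ref{structure-theorem1-shioda}. For the discriminant: all fibers of $F^{(5)}$ are irreducible (types $\II$ and $\I_1$), so the trivial lattice is just the frame $U$ and no fiber lattices occur; hence $|\det\MW(F^{(5)})| = |\det\NS(F^{(5)})| = |\det T(F^{(5)})|$ (as $\NS$ is primitive in the unimodular lattice $H^2(F^{(5)},\Z)$). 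Generically $\NS(F^{(1)})$ is spanned by the frame together with the two copies of $E_8$ coming from the $\II^*$ fibers, hence is unimodular, so $|\det T(F^{(1)})| = 1$; by Theorem~\ref{structure-theorem1-shioda} we have $T(F^{(5)}) = T(F^{(1)})\langle 5\rangle$, a rank-$4$ lattice, so $|\det T(F^{(5)})| = 5^{4}$. The target discriminant is therefore $5^{4}$.

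I would first record the simplification afforded by the irreducibility of the fibers: the height pairing has no local correction terms, so for any sections $P, Q$ of $F^{(5)}$,
\[
\langle P, Q\rangle = 2 + (P\cdot O) + (Q\cdot O) - (P\cdot Q),
\]
a purely intersection-theoretic quantity read off from the Weierstrass coordinates. Next, the eight sections $P_1,\dots,P_8$ are pulled back from the rational elliptic surface $R^{(5)}_s$ along the degree-$2$ base change $t_5\mapsto s$, which is the quotient by the involution $\tau\colon t_5\mapsto\delta/t_5$ fixing $s$. Since base change of degree $2$ multiplies the height pairing by $2$, Proposition~\ref{R5basis} shows that $P_1,\dots,P_8$ have Gram matrix $2G$ on $F^{(5)}$, where $G$ is the $E_8$ matrix displayed there. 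As $\sigma$ is an automorphism of the elliptic surface $F^{(5)}$ fixing the zero section and the Weierstrass equation (it acts by $t_5\mapsto\zeta t_5$, and the equation involves only $t_5^{5}$ and $t_5^{-5}$), it preserves the height pairing, so $\sigma(P_1),\dots,\sigma(P_8)$ also have Gram matrix $2G$.

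It then remains to compute the cross-term matrix $X$ with entries $X_{ij} = \langle P_i,\sigma(P_j)\rangle$ and to verify that the full $16\times 16$ matrix $\bigl(\begin{smallmatrix} 2G & X\\ X^{\mathsf T} & 2G\end{smallmatrix}\bigr)$ has determinant $5^{4}$. Computing $X$ is the main obstacle: it requires explicit coordinates for the base-changed $P_i$ and for $\sigma(P_j)$, and then the intersection numbers $(P_i\cdot O)$, $(\sigma(P_j)\cdot O)$ and especially $(P_i\cdot\sigma(P_j))$. The last is the delicate part, amounting to counting with multiplicity the $t_5$-values (including $t_5 = 0,\infty$) at which the two sections meet; this is best handled with the auxiliary computer algebra files. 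To reduce the labor I would use the $\PSL_2(\F_5)$-symmetry permuting $P_1,\dots,P_8$ to identify which cross terms are genuinely distinct before evaluating them. Once the determinant is found to equal $5^{4}$, the sixteen sections are linearly independent and generate a sublattice whose discriminant matches that of the torsion-free lattice $\MW(F^{(5)})$, so the index is $1$ and they form a basis.
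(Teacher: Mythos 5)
Your proposal is correct and follows essentially the same route as the paper: the paper's proof simply computes the $16\times16$ height pairing matrix of these sections (relegated to the auxiliary files) and observes that its determinant is $625=5^4$, matching the discriminant of the full lattice as computed by Shioda, whence the sections form a basis. Your additional justifications --- that the target discriminant is $5^4$ via $T(F^{(5)})\cong T(F^{(1)})\langle 5\rangle$, that the two diagonal blocks are $E_8\langle 2\rangle$ by the degree-$2$ base change, and that $\sigma$ is an isometry --- are all sound and merely make explicit what the paper leaves to the reader and the computer files.
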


\begin{proof}
  By explicit calculation, the intersection matrix (listed in the
  auxiliary files) has determinant $625 = 5^4$. Since this number
  agrees with the discriminant of the full N\'eron-Severi lattice as
  computed by Shioda, the listed sections form a basis.
\end{proof}

\begin{corollary}
Let $E_{1}$ and $E_{2}$ be elliptic curves over $k$.  Suppose $E_{1}$
and $E_{2}$ are not isogenous.  Then, the Mordell-Weil lattice
$F^{(5)}_{E_{1},E_{2}}(\bar k(t_{5}))$ is defined over
$k(E_{1}[5],E_{2}[5])$, the field over which all the $5$-torsion
points of $E_{1}$ and $E_{2}$ are defined.
\end{corollary}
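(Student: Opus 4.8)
The plan is to follow the template of the corresponding corollaries for $F^{(3)}$ and $F^{(4)}$, reducing the assertion to the universal computation of Theorem~\ref{th:F^5}. Write $K = k(E_1[5],E_2[5])$. The first step is to note that a primitive fifth root of unity $\zeta$ already lies in $K$: since every point of $E_1[5]$ is rational over $K$ and the Weil pairing $e_5\colon E_1[5]\times E_1[5]\to\mu_5$ is nondegenerate, its image generates $\mu_5$, so $\mu_5\subset K$ and in particular $\zeta\in K$.

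Next I would present $E_1$ and $E_2$ in the universal form. Because the level is at least $3$, the curve $X(5)$ is a fine moduli space and \eqref{X(5)} provides a universal member $E_\mu$; over $K$ the curves $E_1$ and $E_2$ carry a full level-$5$ structure, hence correspond to $K$-points of $X(5)$, so there are $u,v\in K$ with $E_1\cong E_u$ and $E_2\cong E_v$ over $K$. The equation of $F^{(5)}_{E_1,E_2}$ is carried to that of $F^{(5)}_{E_u,E_v}$ by the scaling of Weierstrass models recorded just after \eqref{change-of-var}, whose scaling factors lie in $K$; consequently the two K3 surfaces are $K$-isomorphic compatibly with their elliptic fibrations, and their Mordell-Weil lattices are identified over $K$.

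Finally I would quote Theorem~\ref{th:F^5}: the sixteen sections $P_i,\sigma(P_i)$ form a basis. From the specializations $z(P_0),z(Q_0)$ of Proposition~\ref{R5basis}, the $\PSL_2(\F_5)$-action producing the remaining $P_i$, and the action of $\sigma$ (which scales $t_5$ by $\zeta^{-1}$), each of these sections is visibly defined over $k(u,v,\zeta)$. As $u,v\in K$ and $\zeta\in K$, we have $k(u,v,\zeta)\subseteq K$, so $F^{(5)}_{E_1,E_2}(\kbar(t_5))$ is defined over $K=k(E_1[5],E_2[5])$.

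I expect the only real subtlety to be the descent in the middle paragraph: confirming that the full level-$5$ structure genuinely makes the passage to the universal family, and the transfer of the explicit basis back to the specialized surface, defined over $K$ itself rather than some quadratic or further cyclotomic extension. Granting the fine-moduli interpretation of $X(5)$ and the scaling invariance of the $F^{(n)}$-construction noted in Section~\ref{sec:defns}, this is routine, and all the genuine arithmetic is captured by the single containment $\zeta\in K$ coming from the Weil pairing.
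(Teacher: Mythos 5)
Your proof is correct and follows essentially the same route as the paper, which omits the argument for $F^{(5)}$ but gives the identical template for the $F^{(3)}$ corollary: pass to $E_u, E_v$ over $K=k(E_1[5],E_2[5])$ via the universal family, invoke the explicit basis of Theorem~\ref{th:F^5} defined over $k(u,v,\zeta)$, and absorb $\zeta$ into $K$ by the Weil pairing. Your extra care about the fine-moduli descent and the scaling invariance of the $F^{(n)}$-construction is a welcome elaboration of what the paper leaves implicit.
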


\section{Mordell-Weil group of $F^{(6)}$} \label{sec:F6}

In this section we determine the generators of the Mordell-Weil group
$F^{(6)}(\bar k(t_{6}))$ in two different ways.  Both methods use
rational elliptic surfaces arising as the quotient of $F^{(6)}$ by
some involutions.

\subsection{Elliptic modular surface associated with $\Gamma(6)$}
The modular curve $X(6)$ associated with the congruence subgroup
$\Gamma(6)$ is known to be a curve of genus~$1$ and its affine model
is given by $r^{2}=f^{3}+1$, and the elliptic modular surface
associated with it is given by
\begin{equation}\label{eq:X(6)}
y^2 = x^3-2(f^6+20f^3-8)x^2+f^3(f^3-8)^3x.
\end{equation}
(cf. \cite{RS:mod6}).  The subgroup of $3$-torsion points are generated by
\begin{align*}
& \bigl(f^2(f^2+2f+4)^2,4f^2(f^2-f+1)(f^2+2f+4)^2\bigr) \quad \text{ and } \\
& \bigl(-(f^3-8)^2/3,4(2\omega+1)(f^3+1)(f^3-8)^2/9\bigr),
\end{align*}
and the points of order~$2$ are given by
\[
(0,0), \quad \bigl((r+1)(r-3)^3,0\bigr), \quad \bigl((r+1)(r-3)^3,0\bigr).
\]
The $j$-invariant of \eqref{eq:X(6)} is given by
\[
j=\frac{(f^3+4)^3(f^9+228f^6+48f^3+64)^3}
{f^6(f^{3}+1)^3(f^{3}-8)^6}.
\]
Note that the curve \eqref{eq:X(6)} can be written in terms of $r$:
\begin{equation}\label{eq:X(6)var}
y^{2} = x\bigl(x-(r+1)(r-3)^3\bigr)\bigl(x-(r-1)(r+3)^3\bigr),
\end{equation}
and after scaling $x$ and $y$ it is transformed to
\[
(r-1)(r+3)y^{2} = x(x-1)(x-\lambda), \quad\text{where}\quad
\lambda=\frac{(r+1)(r-3)^3}{(r-1)(r+3)^3}.
\]
If we view \eqref{eq:X(6)} as an elliptic surface over $\P^{1}_{f}$,
it is an elliptic modular surface corresponding to
$\Gamma(3)\cap\Gamma_{1}(2)$, whereas \eqref{eq:X(6)var} as an
elliptic surface over $\P^{1}_{r}$ is an elliptic modular surface
corresponding to $\Gamma(2)\cap\Gamma_{1}(3)$. Note that the map to
$X(2)$ is just $(r,f) \to \lambda$, whereas the map to $X(3)$ is
$(r,f) \to \mu = (f^3+4)/3f^2$.

\subsection{$F^{(6)}$ for universal family}
We take two copies of the modular curve $X(6)$
\[
r^{2} = f^{3} + 1, \quad q^{2} = g^{3} + 1,
\]
and the elliptic modular surface \eqref{eq:X(6)}:
\begin{align*}
E_{r,u}:y_{1}^2 &= x_{1}^3-2(f^6+20f^3-8)x_{1}^2+f^3(f^3-8)^3x_{1}.
\\
E_{q,v}:y_{2}^2 &= x_{2}^3-2(g^6+20g^3-8)x_{2}^2+g^3(g^3-8)^3x_{2}.
\end{align*}
We then obtain $F^{(6)}_{E_{r,f},E_{q,g}}$
\begin{equation}\label{eq:generic-F6}
\begin{aligned}
Y^2 &= X^3 - 27(f^{12}+232f^9+960f^6+256f^3+256) \\
& \qquad \qquad \quad \times (g^{12}+232g^9+960g^6+256g^3+256)X \\
& \quad +54\Bigl(864f^6(f^3+1)^3(f^3-8)^6t^6 +\frac{864g^6(g^3+1)^3(g^3-8)^6}{t^6} \\
& \qquad \qquad +(f^{18}-516f^{15}-12072f^{12}-24640f^9-30720f^6+6144f^3+4096)\\
& \qquad \qquad \quad \times(g^{18}-516g^{15}-12072g^{12}-24640g^9-30720g^6+6144g^3+4096) \Big).
\end{aligned}
\end{equation}

\subsection{Rational elliptic surfaces with parameter $s_{6,i}$}
As before, we have several rational elliptic surfaces arising as
quotients of $F^{(6)}$. Namely, for $0 \leq i \leq 6$, let 
\[
s_{6,i}=s_i = \zeta_6 rf(f^3-8) t + \frac{qg(g^3-8)}{t} ,
\]
where $\zeta_6 = -\omega$ is a primitive sixth root of unity.
Then the equation tranforms to the rational elliptic surface

\[
R_s^{(6)}: Y^2 = X^3 - A X + 6^6 (s^6-6 B s^4+9 B^2 s^2-2 B^3) + C
\]
where $s=s_{0}=s_{6,0}$ and 
\[
\left\{
\begin{aligned}
A &= 27 (f^{12}+232 f^9+960 f^6+256 f^3+256) (g^{12}+232 g^9+960 g^6+256 g^3+256) \\
B &= q r f g (f^3-8) (g^3-8) \\
C &= 54 (f^{18}-516 f^{15}-12072 f^{12}-24640 f^9-30720 f^6+6144 f^3+4096) \\
& \qquad \times (g^{18}-516 g^{15}-12072 g^{12}-24640 g^9-30720 g^6+6144 g^3+4096).
\end{aligned}
\right.
\]

The Mordell-Weil lattice of this elliptic surface is generically
$E_8$. It has generically twelve $\I_1$ fibers, none of them defined
over the ground field $k(u,v)$. Therefore, one has to proceed by brute
force in order to enumerate these. Taking $X$ to be a quadratic
polynomial in $s$, and $Y$ cubic, with undetermined coefficients, we
obtain a system of equations for the coefficients. The $240$ solutions
give the sections of minimal height. Since these are complicated to
write down, we will not do so here. The formulas may be found in the
auxiliary files. Instead, we will give a more conceptual description
below, in terms of sections arising from $F^{(3)}$ and its twist, the
cubic surface.

We may also form the rational elliptic surface in terms of $s_1$: the
equation becomes (with the same values of $A, B, C$ as above):
\[
R_{s_1}^{(6)}: Y^2 = X^3 - A X + 6^6 (s_1^6-6 \zeta B s_1^4+9 \zeta^2 B^2 s_1^2-2
\zeta^3 B^3) + C.
\]
A similar calculation gives the $240$ minimal height sections for this
elliptic surface. Let $P_1, \dots, P_8$ be the sections coming from
$R_s^{(6)}$ and $P'_1, \dots, P'_8$ those from $R_{s_1}^{(6)}$.

\begin{theorem} \label{th:F^6-RES}
Let $(r,f), (q,g) \in X(6)(\kbar)$ be such that $E_{r,f}$ and
$E_{q,g}$ are not isogenous. The sections $P_1, \dots, P_8, P'_1,
\dots, P'_8$ form a basis for the Mordell-Weil group of
$F^{(6)}(\kbar(t_{6}))$.
\end{theorem}
\begin{proof}
By direct calculation of the height pairing, we find that the
discriminant of the sublattice of the Mordell-Weil group spanned by
these sections is $6^4$. Therefore it must be the full group.
\end{proof}

\begin{corollary}
Let $E_{1}$ and $E_{2}$ be elliptic curves over $k$.  Suppose $E_{1}$
and $E_{2}$ are not isogenous.  Then, the Mordell-Weil lattice
$F^{(6)}_{E_{1},E_{2}}(\bar k(t_{6}))$ is defined over
$k(E_{1}[6],E_{2}[6])$, the field over which all the $6$-torsion
points of $E_{1}$ and $E_{2}$ are defined.
\end{corollary}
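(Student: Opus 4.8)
The plan is to run exactly the reduction used for the corollaries attached to Theorem~\ref{th:F^(3)}, Theorem~\ref{th:F4-RES} and Theorem~\ref{th:F^5}: pass from an arbitrary non-isogenous pair $E_1,E_2$ to the universal family $F^{(6)}_{E_{r,f},E_{q,g}}$ whose Mordell-Weil basis is produced in Theorem~\ref{th:F^6-RES}, and then verify that this explicit basis is already rational over $K:=k(E_1[6],E_2[6])$.

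First I would use that for level $N=6$ the modular curve $X(6)$ (with the genus-one model $r^2=f^3+1$) is a \emph{fine} moduli space. Over $K$ all of $E_1[6]$ and $E_2[6]$ is rational, so each $E_i$ acquires a full level-$6$ structure defined over $K$; this is precisely a $K$-point of $Y(6)$, hence a choice of parameters $(r,f)\in K$ and $(q,g)\in K$ with $r^2=f^3+1$ and $q^2=g^3+1$, together with $K$-isomorphisms $E_1\cong E_{r,f}$ and $E_2\cong E_{q,g}$. The scaling ambiguity between Weierstrass models is harmless here, since the equation of $F^{(6)}$ is insensitive to it, as noted after \eqref{change-of-var}.

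Next I would invoke Theorem~\ref{th:F^6-RES} and track the field of definition of the sixteen generators $P_1,\dots,P_8,P'_1,\dots,P'_8$. These arise from the rational elliptic surfaces $E_s$ and $E_{s_1}$, whose Weierstrass coefficients $A,B,C$ lie in $k(r,f,q,g)$ and whose minimal-height sections are cut out by a polynomial system that splits into linear factors over the parameter field once the sixth root of unity $\zeta_6$ is adjoined. The Laurent substitution expressing $s_{6,i}$ in terms of $t_6$ has coefficients in $k(r,f,q,g)$ and involves only $\zeta_6$. Hence the entire basis is defined over $K(\zeta_6)$.

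Finally I would observe that $\zeta_6\in K$: because $E_1[6]\subset E_1(K)$, the perfect Weil pairing $e_6\colon E_1[6]\times E_1[6]\to\mu_6$ is surjective, so $\mu_6\subset K$. Therefore the basis, and with it the full lattice $F^{(6)}_{E_1,E_2}(\bar k(t_6))$, descends to $K$. I expect the one point requiring genuine care to be the first step: ensuring that a $K$-rational level-$6$ structure yields genuinely $K$-rational parameters $(r,f)$ (including the square-root coordinate $r$) and a $K$-isomorphism, rather than only a $\bar k$-isomorphism. This is exactly where the fine-moduli property of $X(6)$, as opposed to the coarse $j$-line, is used; everything downstream is the same root-of-unity bookkeeping as in the lower-level cases.
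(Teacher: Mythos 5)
Your proposal is correct and follows essentially the same route as the paper: the paper leaves this corollary without an explicit proof, but its proof of the analogous corollary for $F^{(3)}$ is precisely your reduction --- specialize the universal family at $K$-rational level-$6$ parameters (using that $X(6)$ is a fine moduli space), note the explicit basis of Theorem~\ref{th:F^6-RES} is defined over $K(\zeta_6)$, and conclude via the Weil pairing that $\zeta_6\in K$. Your extra care about the genus-one coordinate $r$ on $X(6)$ being $K$-rational is a reasonable elaboration of the step the paper states tersely as ``$E_1$ and $E_2$ are isomorphic to $E_{u}$ and $E_{v}$ for suitable choices of parameters.''
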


\subsection{$F^{(6)}$ as a double cover of a cubic surface}

Now we describe a different method to compute the Mordell-Weil group
of $F^{(6)}$, going through its quotient $F^{(3)}$ and a quadratic
twist of this quotient, which is a rational surface. In the remainder
of this subsection, we will let $\lambda$ and $\mu$ be parameters on
$X(2)$. For an elliptic curve $E$ over $k(t)$, we denote by
${}^{t}\!E$ its quadratic twist.

\begin{lemma}
Let $E_{1}$ and $E_{2}$ be given as in \eqref{ell}.  The
Kodaira-N\'eron model of the quadratic twist
${}^{t_{3}}\!F^{(3)}_{E_{1},E_{2}}$ is birationally equivalent to the
cubic surface given by
\[
Z^{3}+cZW^{2}+dW^{3}=X^{3}+aXY^{2}+bY^{3}.
\]
\end{lemma}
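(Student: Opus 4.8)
The plan is to rewrite the cubic surface as a Kummer-type cubic pencil and then recognize it, via the very change of coordinates used to produce $F^{(3)}$, as the quadratic twist ${}^{t_3}\!F^{(3)}_{E_1,E_2}$. Putting $x_1 = X/Y$ and $x_2 = Z/W$, the substitution $X = x_1 Y$, $Z = x_2 W$ turns $Z^3 + cZW^2 + dW^3 = X^3 + aXY^2 + bY^3$ into $W^3(x_2^3 + cx_2 + d) = Y^3(x_1^3 + ax_1 + b)$; introducing the fibration parameter $\tau = Y/W$, this reads
\[
x_2^3 + c x_2 + d = \tau^3(x_1^3 + ax_1 + b).
\]
This is exactly the rational Kummer-type surface noted earlier (in the proof of Proposition~\ref{action:fourfold}) as being rational and \emph{not} birational to $F^{(3)}$, now equipped with the elliptic fibration given by $\tau$. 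It therefore suffices to identify this pencil, over $k(\tau)$, with the quadratic twist of $F^{(3)}_{E_1,E_2}$ by $\tau = t_3$.

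Next I would convert this pencil to Weierstrass form exactly as in the passage from \eqref{Kummer} to \eqref{eq:F6}. Projectivizing to $x_2^3 + c x_2 z^2 + d z^3 = \tau^3(x_1^3 + a x_1 z^2 + b z^3)$ and setting $z = 0$ gives $x_2 = \tau x_1$, so the family of plane cubics over $k(\tau)$ carries the obvious rational point $(1:\tau:0)$; this is the analogue of $(1:t_6^2:0)$, the only difference being that the relevant cube root of the leading coefficient $\tau^3$ is now the \emph{rational} function $\tau$ rather than $t_6^2$. Applying the change of variables \eqref{change-of-var} with $t_6 \mapsto \sqrt{\tau}$ (equivalently $t_6^2 \mapsto \tau$) then yields the Weierstrass equation
\[
Y^2 = X^3 - 3 a c\, X + \tfrac{1}{64}\bigl(\Delta_{E_1}\tau^3 + 864\, bd + \Delta_{E_2}/\tau^3\bigr),
\]
which is literally the defining equation \eqref{eq:F1} of $F^{(3)}_{E_1,E_2}$ with $t_1 = \tau^3$, i.e. $t_3 = \tau$.

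The one point requiring care is that this identification is defined only over $k(\sqrt{\tau})$, and the descent defect is precisely $\sqrt{\tau}$. Inspecting \eqref{change-of-var}, the parameter $t_6$ enters the $X$-coordinate only through the even powers $t_6^2, t_6^4, t_6^6$, which become $\tau, \tau^2, \tau^3$, so $X$ is a rational function of $\tau$ (and of $x_1,x_2$); in the $Y$-coordinate every term of the numerator is again even in $t_6$, while the denominator carries the single odd factor $t_6^3 = \tau^{3/2}$. Hence $X \in k(\tau)(x_1,x_2)$ but $Y = \tau^{-3/2}\tilde Y$ with $\tilde Y \in k(\tau)(x_1,x_2)$. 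Setting $\tilde X = \tau X$ and $\tilde Y = \tau^{3/2} Y$ and clearing denominators transforms the displayed equation into
\[
\tilde Y^2 = \tilde X^3 - 3ac\,\tau^2\,\tilde X + \tfrac{1}{64}\bigl(\Delta_{E_1}\tau^6 + 864\,bd\,\tau^3 + \Delta_{E_2}\bigr),
\]
which is the quadratic twist of $F^{(3)}_{E_1,E_2}$ by $\tau = t_3$.

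Since birational equivalence of surfaces is the same as isomorphism of function fields, and we have exhibited such an isomorphism $k(\tau)(x_1,x_2) \cong k(t_3)(\tilde X,\tilde Y)$ compatible with the twisted Weierstrass relation, the cubic surface is birationally equivalent to the Kodaira--N\'eron model of ${}^{t_3}\!F^{(3)}_{E_1,E_2}$. The main obstacle is the bookkeeping in this last step: one must verify that \eqref{change-of-var} introduces no odd power of $t_6$ beyond the single factor $t_6^3$, so that the twisting class is exactly $\tau = t_3$ and not $\tau$ times a spurious square in $k(t_3)^\times$.
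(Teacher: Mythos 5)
Your argument is correct and is essentially the paper's proof run in reverse: the paper starts from the twisted Weierstrass equation $t_3Y^2=\cdots$, rewrites it as $(t_6Y)^2=\cdots$ with $t_3=t_6^2$, and uses the change of variables \eqref{change-of-var} (noting exactly the parity fact you isolate, namely that $X$ and $t_6Y$ are rational in $t_3$) to land on the cubic pencil $x_2^3+cx_2+d=t_3^3(x_1^3+ax_1+b)$, which it then homogenizes. Your extra care in checking that the twisting class is precisely $t_3$ modulo squares is the same observation, just made explicit.
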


\begin{proof}
The equation of ${}^{t_{3}}\!F^{(3)}$ is given by 
\begin{equation}\label{F3twist}
t_{3}Y^2 = X^3 - 3\,ac\,X +
\frac{1}{64}\Bigl(\Delta_{E_{1}}t_{3}^{3}+864\,bd+\frac{\Delta_{E_{2}}}{t_{3}^{3}}\Bigr).
\end{equation}
Since $t_{3}=t_{6}^{2}$, this equation can be written as
\begin{equation}\label{twist2}
(t_{6}Y)^2 = X^3 - 3\,ac\,X +
  \frac{1}{64}\Bigl(\Delta_{E_{1}}t_{6}^{6}+864\,bd+\frac{\Delta_{E_{2}}}{t_{6}^{6}}\Bigr).
\end{equation}
Rewriting the change of coordinates \eqref{change-of-var} using $t_{6}^{2}=t_{3}$, we see that $X$ and $t_{6}Y$ are written in terms of $t_{3}$:
\[
\left\{
\begin{aligned}
&X=\frac{-t_{3}(2at_{3}^2-c)x_{1} - 3(bt_{3}^3-d) - (at_{3}^2-2c)x_{2}}
{t_{3}(t_{3}x_{1}-x_{2})},
\\
&t_{6}Y=\frac{6(at_{3}^2-c)(bt_{3}^3-d)+6(at_{3}^2-c)(at_{3}^3x_{1}-cx_{2})
-9(bt_{3}^3-d)(t_{3}^2x_{1}^2-x_{2}^2)}
{2t_{3}(t_{3}x_{1}-x_{2})^2}.
\end{aligned}
\right.
\]
Plugging these back into \eqref{twist2}, we obtain the equation
\[
x_{2}^{3}+cx_{2}+d = t_{3}^{3}(x_{1}^{3}+ax_{1}+b).
\]
Now, if we let $x_{1}=X/Y$, $x_{2}=Z/W$ and $t_{3}=Y/W$, we obtain the
desired homogenous cubic equation.
\end{proof}

\begin{lemma}\label{q-twist}
Let $E$ be an elliptic curve over $k(T)$, and ${}^{T}\!E$ its
quadratic twist by $T$.  Then $E(k(T))\oplus{}^{T}\!E(k(T))$ is a
subgroup of finite index of $E(k(\sqrt{T}))$.
\end{lemma}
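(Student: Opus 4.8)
The plan is to let $G = \Gal(k(\sqrt T)/k(T))$ act on $E(k(\sqrt T))$ and to split this group, up to $2$-torsion, into the two eigenspaces for the nontrivial element. Write $F = k(T)$ and $L = k(\sqrt T)$, so that $L/F$ is quadratic with Galois group generated by $\iota : \sqrt T \mapsto -\sqrt T$. Putting $E$ in a Weierstrass form $y^2 = x^3 + ax + b$ with $a,b \in F$, we may take the quadratic twist to be ${}^{T}\!E : T y^2 = x^3 + a x + b$, and over $L$ there is the usual isomorphism $\psi : {}^{T}\!E \xrightarrow{\ \sim\ } E$, $(x,y) \mapsto (x, \sqrt T\, y)$. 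This produces a natural homomorphism
\[
\Phi : E(F) \oplus {}^{T}\!E(F) \longrightarrow E(L), \qquad (P,Q) \mapsto P + \psi(Q),
\]
and the whole point is to show that $\Phi$ has finite kernel and finite-index image.

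First I would match the two summands with the $\iota$-eigenspaces of $E(L)$. By Galois descent, $E(F)$ is precisely the subgroup of $\iota$-fixed points. For $Q = (x,y) \in {}^{T}\!E(F)$ one computes $\iota(\psi(Q)) = (x, -\sqrt T\, y) = -\psi(Q)$, so $\psi\bigl({}^{T}\!E(F)\bigr)$ lands in the anti-invariant subgroup $\{R : \iota(R) = -R\}$; conversely, any such $R = (X,Y)$ satisfies $X \in F$ and $Y \in \sqrt T\cdot F$, whence $R = \psi(Q)$ for a unique $Q \in {}^{T}\!E(F)$. Thus the images of the two summands are exactly the invariants and the anti-invariants.

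The finite-index assertion then comes from the averaging trick: for any $R \in E(L)$ we write $2R = (R + \iota R) + (R - \iota R)$, where $R + \iota R$ is $\iota$-fixed, hence in $E(F)$, and $R - \iota R$ is anti-invariant, hence of the form $\psi(Q)$ with $Q \in {}^{T}\!E(F)$. Therefore $2E(L) \subseteq \Im(\Phi)$, and since $E(L)$ is finitely generated by the Mordell--Weil theorem (the relevant $E$ being a non-constant elliptic surface), the index $[E(L) : 2E(L)]$, and a fortiori $[E(L) : \Im(\Phi)]$, is finite. For the kernel, $P + \psi(Q) = 0$ forces $\psi(Q) = -P$ to be both $\iota$-fixed and anti-invariant, so $2P = 0$; hence $\ker\Phi$ consists of $2$-torsion and is finite.

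I expect the only delicate point to be the recurring factor of $2$: the decomposition into eigenspaces is clean only after inverting $2$, so $\Phi$ is generally neither injective nor surjective on the nose, but only up to finite $2$-primary groups. This is immaterial for the finite-index claim, and in every application in this paper the relevant Mordell--Weil groups are torsion-free by Theorem~\ref{structure-theorem1-shioda}, so there $\Phi$ is genuinely injective and identifies $E(F)\oplus{}^{T}\!E(F)$ with an honest finite-index subgroup of $E(L)$.
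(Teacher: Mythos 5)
Your argument is correct and is essentially the paper's own proof: the paper likewise composes $P \mapsto (P+\iota P, P-\iota P)$ with $(Q,R) \mapsto Q+R$ to realize multiplication by $2$ on $E(k(\sqrt{T}))$, and concludes from the finite index of the image of $[2]$. Your additional remarks on the eigenspace identification and the $2$-torsion kernel are accurate refinements but do not change the approach.
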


We will use the following well-known lemma to put together the
sections from the quotients $F^{(3)}$ and ${}^{t_{3}}\!F^{(3)}$.
\begin{proof}
Let $\iota$ be the automorphism $\sqrt{T}\mapsto -\sqrt{T}$.  Then the
composition of the maps
\[\renewcommand{\arraystretch}{1.414}
\begin{array}{*5c}
E(k(\sqrt{T})) & \longrightarrow & E(k(T))\oplus{}^{T}\!E(k(T)) 
& \longrightarrow & E(k(\sqrt{T})) \\
P & \longmapsto & (P+\iota(P),P-\iota(P)) &  & \\
& & (Q,R) & \longmapsto & Q+R
\end{array}
\]
is the multiplication-by-$2$ map $[2]$.  Since the image of $[2]$ is a
subgroup of finite index, the assertion follows.
\end{proof}

In order to compute ${}^{t_{3}}\!F^{(3)}(\bar k(t_{3}))$, we find the
twenty-seven lines contained in the cubic surface.  To state the
results clearly, we take $E_{1}$ and $E_{2}$ to be in Legendre form as
in \eqref{legendre}, and consider the cubic surface
\begin{equation}\label{cubic-legendre}
Z(Z-W)(Z-\mu W) = X(X-Y)(X-\lambda Y).
\end{equation}

As is well-known, the group generated by $\lambda\mapsto 1-\lambda$
and $\lambda\mapsto 1/\lambda$ leave the $j$-invariant of
$y^{2}=x(x-1)(x-\lambda)$.  As in Proposition~\ref{action:fourfold},
this action lifts to the family of cubic surfaces.
\begin{proposition}\label{prop:action-cubic-surface}
There are automorphisms acting on the family of cubic surfaces
\eqref{cubic-legendre} parametrized by $(\lambda, \mu)$:
\begin{align*}
&\bigl((X:Y:Z:W),\lambda,\mu\bigr) \mapsto \bigl((X-Y:-Y:Z:W),1-\lambda,\mu\bigr), \\
&\bigl((X:Y:Z:W),\lambda,\mu\bigr) \mapsto \bigl((X:\lambda Y:Z:W),1/\lambda,\mu\bigr), \\
&\bigl((X:Y:Z:W),\lambda,\mu\bigr) \mapsto \bigl((X:Y:Z-W:-W),\lambda,1-\mu\bigr), \\
&\bigl((X:Y:Z:W),\lambda,\mu\bigr) \mapsto \bigl((X:Y:Z:\mu W),\lambda,1/\mu\bigr). \\
\end{align*}
These automorphisms act on the set of twenty-seven lines contained in
\eqref{cubic-legendre}
\end{proposition}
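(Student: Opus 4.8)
The plan is to establish the two assertions in turn: first, that each of the four displayed maps is an automorphism of the total space of the family \eqref{cubic-legendre}, carrying the member $S_{\lambda,\mu}$ over $(\lambda,\mu)$ isomorphically onto the member over the indicated transformed parameters; and second, that each such map, being projective-linear in $(X:Y:Z:W)$, carries lines to lines and hence induces a bijection between the twenty-seven lines of the source and those of the target.

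For the first assertion I would argue by direct substitution. Each map is linear in the homogeneous coordinates, hence a projective-linear automorphism of $\Proj^3$ for every fixed value of the parameters, so it suffices to verify that it sends a point of $S_{\lambda,\mu}$ to a point of the surface with the transformed parameters. For the first map, substituting $X'=X-Y$, $Y'=-Y$, $Z'=Z$, $W'=W$ into the defining equation $Z'(Z'-W')(Z'-\mu W')=X'(X'-Y')\bigl(X'-(1-\lambda)Y'\bigr)$ of $S_{1-\lambda,\mu}$ leaves the left-hand side unchanged and turns the right-hand side into
\[
(X-Y)\bigl((X-Y)+Y\bigr)\bigl((X-Y)+(1-\lambda)Y\bigr)=X(X-Y)(X-\lambda Y),
\]
which is exactly the right-hand side of \eqref{cubic-legendre}; the second map is checked identically after the substitution $Y'=\lambda Y$. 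The defining equation is symmetric under interchanging the roles of $(X,Y,\lambda)$ and $(Z,W,\mu)$ (this swap sends the family to itself with $\lambda\leftrightarrow\mu$), so the remaining two maps follow from the first two by the same computation. One also records that $\lambda\mapsto 1-\lambda$ and $\lambda\mapsto 1/\lambda$ generate the group $S_3\cong\PSL_2(\F_2)$ of automorphisms of $X(2)$ fixing $j(E_\lambda)$, and likewise in $\mu$, so the four maps generate a lift of $S_3\times S_3$ to the family; since each generator is an involution on the base and is defined over $k$, no auxiliary roots of unity are needed here, in contrast to the $\SL_2(\Z/n\Z)$-actions of the previous sections.

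For the second assertion I would invoke the fact that a projective-linear isomorphism $\Proj^3\xrightarrow{\sim}\Proj^3$ sends lines to lines. Each generator restricts to an isomorphism $S_{\lambda,\mu}\xrightarrow{\sim}S_{\lambda',\mu'}$ of smooth cubic surfaces, hence carries each of the twenty-seven lines on $S_{\lambda,\mu}$ to a line lying on $S_{\lambda',\mu'}$; as a smooth cubic surface contains exactly twenty-seven lines, this restriction is a bijection between the two sets of lines. Interpreting the twenty-seven lines as universal objects over the base (each given by linear equations whose coefficients are functions of $\lambda,\mu$), the bijection is recorded as a permutation of these twenty-seven families, and so the generated group acts on the set of twenty-seven lines. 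I do not expect any genuine obstacle: the substitutions of the first assertion are routine and may be relegated to a direct check or the auxiliary files, and the only point requiring care is precisely this last interpretation of ``acts on the set of twenty-seven lines'' as the induced permutation of the universal line-families, which is immediate once line-preservation by projective-linear maps has been noted.
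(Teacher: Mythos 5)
Your proof is correct and is exactly the intended argument: the paper states this proposition without proof, treating the substitution check as routine (in the spirit of the proof of Proposition~\ref{action:fourfold}), and your direct verification of the four substitutions together with the observation that a projective-linear isomorphism of smooth cubic surfaces must carry the twenty-seven lines bijectively onto the twenty-seven lines supplies precisely what was omitted.
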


Let
$\delta=(\Delta_{E_{2}}/\Delta_{E_{1}})^{1/6}=\bigl(\mu(\mu-1)\bigr)^{1/3}/\bigl(\lambda(\lambda-1)\bigr)^{1/3}$. Note
$\delta^3$ is invariant under $\lambda \to 1- \lambda$ and $\mu \to 1-
\mu$, whereas under $\lambda \to 1/\lambda$, it is taken to
$-\delta^3/\lambda^3 = (-\delta/\lambda)^3$. Therefore, we may extend
the action of the group of automorphisms to $\delta$ in a natural
way. The next result shows that all the twenty-seven lines are defined
over the cubic extension field of $k(\lambda, \mu)$ defined by
$\delta$.

\begin{proposition}\label{prop:27-lines}
The twenty-seven lines contained in the cubic surface
\eqref{cubic-legendre} are given as follows.  In
terms of homogeneous parameters $(\alpha: \beta)$ on $\Proj^1$, the lines
$(X,Y,Z,W)$ belong to the following list:
\begin{small}
\begin{gather*}
\begin{array}{ccc}
(0:\alpha:0:\beta), \quad &
(\alpha:\alpha:0:\beta), \quad &
(\lambda \alpha:\alpha:0:\beta), \\[\smallskipamount]
(0:\alpha:\beta:\beta), \quad &
(\alpha:\alpha:\beta:\beta), \quad &
(\lambda \alpha:\alpha:\beta:\beta), \\[\smallskipamount]
(0:\alpha:\mu \beta:\beta), \quad &
(\alpha:\alpha:\mu \beta:\beta), \quad &
(\lambda \alpha:\alpha:\mu \beta:\beta),
\end{array}
\\[\smallskipamount]
{\setlength{\arraycolsep}{2pt}
\begin{array}{rcccccccl}
\bigl(&\lambda (\mu-1) \alpha - \delta^2 \lambda (\lambda-1) \beta &:& (\mu -\lambda) \alpha &:& \delta \lambda (\lambda-1) \alpha - \mu (\lambda-1) \beta &:& (\mu -\lambda) \beta&\bigr),
\\[\smallskipamount]
\bigl(&\lambda \alpha + \delta^2 \lambda (\lambda-1) \beta &:& (\lambda+\mu-\lambda \mu) \alpha &:& \delta \lambda (\lambda-1) \alpha + \mu \beta &:& (\lambda+\mu-\lambda \mu) \beta&\bigr),
\\[\smallskipamount]
\bigl(&\lambda (\mu-1) \alpha - \omega^2 \delta^2 \lambda (\lambda-1) \beta &:& (\mu-\lambda) \alpha &:& \omega \delta \lambda (\lambda-1) \alpha - \mu (\lambda-1) \beta &:& (\mu-\lambda) \beta&\bigr),
\\[\smallskipamount]
\bigl(&\lambda \alpha + \omega^2 \delta^2 \lambda (\lambda-1) \beta &:& (\mu + \lambda -\lambda \mu) \alpha &:& \omega \delta \lambda (\lambda-1) \alpha + \mu \beta &:& (\lambda+\mu-\lambda \mu) \beta&\bigr),
\\[\smallskipamount]
\bigl(&\lambda (\mu-1) \alpha - \omega \delta^2 \lambda (\lambda-1) \beta &:& (\mu-\lambda) \alpha &:&  \omega^2 \delta \lambda (\lambda-1) \alpha - \mu (\lambda-1) \beta &:& (\mu-\lambda) \beta&\bigr),
\\[\smallskipamount]
\bigl(&\lambda \alpha + \omega \delta^2 \lambda (\lambda-1) \beta &:& (\lambda+\mu-\lambda \mu) \alpha &:& \omega^2 \delta \lambda (\lambda-1) \alpha + \mu \beta &:& (\lambda+\mu-\lambda \mu) \beta&\bigr),
\\[\smallskipamount]
\bigl(&\lambda (\mu-1) \alpha + \delta^2 \lambda (\lambda-1) \beta &:& (\lambda \mu-1) \alpha &:& \delta \lambda (\lambda-1) \alpha + \mu (\lambda-1) \beta &:& (\lambda \mu-1) \beta&\bigr),
\\[\smallskipamount]
\bigl(&\lambda (\mu-1) \alpha + \omega^2 \delta^2 \lambda (\lambda-1) \beta &:& (\lambda \mu-1) \alpha &:& \omega \delta \lambda (\lambda-1) \alpha + \mu (\lambda-1) \beta &:& (\lambda \mu-1) \beta&\bigr),
\\[\smallskipamount]
\bigl(&\lambda (\mu-1) \alpha + \omega \delta^2 \lambda (\lambda-1) \beta &:& (\lambda \mu-1) \alpha &:& \omega^2 \delta \lambda (\lambda-1) \alpha + \mu (\lambda-1) \beta &:& (\lambda \mu-1) \beta&\bigr),
\\[\smallskipamount]
 \bigl(&\lambda \mu \alpha - \delta^2 \lambda (\lambda-1) \beta &:& (\lambda+\mu-1) \alpha &:& -\delta \lambda (\lambda-1) \alpha + \lambda \mu \beta &:& (\lambda+\mu-1) \beta&\bigr),
\\[\smallskipamount]
\bigl(&\lambda \mu \alpha + \delta^2 \lambda (\lambda-1) \beta &:& (\lambda \mu-\lambda+1) \alpha &:& -\delta \lambda (\lambda-1) \alpha + \mu \beta &:& (\lambda \mu-\lambda+1) \beta&\bigr),
\\[\smallskipamount]
\bigl(&\lambda \mu \alpha - \omega^2 \delta^2 \lambda (\lambda-1) \beta &:& (\lambda+\mu-1) \alpha &:& -\omega \delta \lambda (\lambda-1) \alpha + \lambda \mu \beta &:& (\lambda+\mu-1) \beta&\bigr),
\\[\smallskipamount]
\bigl(&\lambda \mu \alpha + \omega^2 \delta^2 \lambda (\lambda-1) \beta &:& (\lambda \mu-\lambda+1) \alpha &:& -\omega \delta \lambda (\lambda-1) \alpha + \mu \beta &:& (\lambda \mu-\lambda+1) \beta&\bigr),
\\[\smallskipamount]
\bigl(&\lambda \mu \alpha - \omega \delta^2 \lambda (\lambda-1) \beta &:& (\lambda+\mu-1) \alpha &:& -\omega^2 \delta \lambda (\lambda-1) \alpha + \lambda \mu \beta &:& (\lambda+\mu-1) \beta&\bigr),
\\[\smallskipamount]
\bigl(&\lambda \mu \alpha + \omega \delta^2 \lambda (\lambda-1) \beta &:& (\lambda \mu-\lambda+1) \alpha &:& -\omega^2 \delta \lambda (\lambda-1) \alpha + \mu \beta &:& (\lambda \mu-\lambda+1) \beta&\bigr),
\\[\smallskipamount]
\bigl(&\lambda \alpha - \delta^2 \lambda (\lambda-1) \beta &:& (\lambda \mu-\mu+1) \alpha &:& \delta \lambda (\lambda-1) \alpha + \lambda \mu \beta &:& (\lambda \mu-\mu+1) \beta&\bigr),
\\[\smallskipamount]
\bigl(&\lambda \alpha - \omega^2 \delta^2 \lambda (\lambda-1) \beta &:& (\lambda \mu-\mu+1) \alpha &:& \omega \delta \lambda (\lambda-1) \alpha + \lambda \mu \beta &:& (\lambda \mu-\mu+1) \beta&\bigr),
\\[\smallskipamount]
\bigl(&\lambda \alpha - \omega \delta^2 \lambda (\lambda-1) \beta &:& (\lambda \mu-\mu+1) \alpha &:& \omega^2 \delta \lambda (\lambda-1) \alpha + \lambda \mu \beta &:& (\lambda \mu-\mu+1) \beta&\bigr).
\end{array}
}
\end{gather*}
\end{small}
They may be generated by taking the orbits of the first and the last
three lines in the list, under the action of the automorphism group
defined by Proposition \ref{prop:action-cubic-surface}.
\end{proposition}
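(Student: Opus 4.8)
The plan is to exhibit all twenty-seven lines explicitly and then invoke a counting argument: for generic $(\lambda,\mu)$ (with $\lambda,\mu\notin\{0,1\}$ and $\lambda\neq\mu$) the cubic surface \eqref{cubic-legendre} is smooth, so by the classical theorem it contains exactly twenty-seven lines. Hence it suffices to produce twenty-seven pairwise distinct lines lying on it. Write $G(X,Y)=X(X-Y)(X-\lambda Y)$ and $F(Z,W)=Z(Z-W)(Z-\mu W)$, so that \eqref{cubic-legendre} reads $F(Z,W)=G(X,Y)$. The nine lines in the initial $3\times 3$ array arise by pairing a root of $G$ with a root of $F$: choosing $X/Y\in\{0,1,\lambda\}$ and $Z/W\in\{0,1,\mu\}$ makes both sides vanish identically, so the corresponding two-plane intersection lies on the surface, and checking this is immediate substitution.

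The remaining eighteen lines I would find with a ``graph'' ansatz, parametrizing a line as $(X:Y:Z:W)$ with $Z=pX+qY$ and $W=rX+sY$. Such a line lies on the surface precisely when $F(pX+qY,rX+sY)=G(X,Y)$ as binary cubics. Since $F$ and $G$ split into distinct linear factors, this identity holds if and only if the M\"obius transformation induced by $\bigl(\begin{smallmatrix}p&q\\ r&s\end{smallmatrix}\bigr)$ carries the root set $\{0,1,\lambda\}$ of $G$ bijectively onto the root set $\{0,1,\mu\}$ of $F$, together with a matching of leading coefficients. There are $3!=6$ such bijections, and for each the leading-coefficient condition amounts to extracting a cube root of $\mu(\mu-1)/\bigl(\lambda(\lambda-1)\bigr)$; this is exactly where $\delta$ and its multiples $\omega\delta,\omega^2\delta$ enter, since rescaling $(p,q,r,s)$ by a cube root of unity produces a genuinely different line. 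Solving the resulting linear conditions in each of the $6\times 3=18$ cases yields the explicit formulas in the list, and one verifies by substitution (using $\delta^3=\mu(\mu-1)/(\lambda(\lambda-1))$ and $1+\omega+\omega^2=0$) that each indeed lies on \eqref{cubic-legendre}. A short check that all twenty-seven lines are pairwise distinct for generic $(\lambda,\mu)$ then completes the identification.

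For the orbit statement, recall from Proposition \ref{prop:action-cubic-surface} that the group $\langle\lambda\mapsto 1-\lambda,\ \lambda\mapsto 1/\lambda\rangle\times\langle\mu\mapsto 1-\mu,\ \mu\mapsto 1/\mu\rangle\cong S_3\times S_3$ acts on the family and permutes the lines, once its action is extended to $\delta$ as described in the text preceding the proposition. The $\mu$-factor permutes the three values $\{0,1,\mu\}$ of $Z/W$ transitively, so the orbit of the three lines in the first row of the array (those with $Z=0$) is exactly the nine lines of the array. Applying the generators to the last three lines of the list and tracking the induced permutations of $\delta,\omega\delta,\omega^2\delta$, one checks that their orbit is precisely the eighteen remaining lines. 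Both are finite, mechanical orbit computations.

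I expect the main obstacle to be the eighteen non-obvious lines: organizing the six M\"obius matchings together with the threefold cube-root ambiguity, and identifying the resulting coordinates with the closed forms in the list, is algebraically heavy, even though each individual substitution check is routine. By comparison, the smoothness and distinctness verifications and the final orbit bookkeeping are straightforward.
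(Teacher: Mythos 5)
Your proof is correct, but it reaches the eighteen non-obvious lines by a genuinely different route than the paper. You impose the graph ansatz $Z=pX+qY$, $W=rX+sY$ and observe that the identity of binary cubics $F(pX+qY,rX+sY)=G(X,Y)$ forces the associated M\"obius transformation to carry the root set $\{0,1,\lambda\}$ bijectively onto $\{0,1,\mu\}$, giving $3!=6$ matchings, each with a threefold ambiguity in the overall scalar coming from the cube root of $\mu(\mu-1)/\bigl(\lambda(\lambda-1)\bigr)$ --- hence $6\times 3=18$ lines, with $\delta,\omega\delta,\omega^2\delta$ entering exactly where you say. The paper instead fixes one of the nine obvious lines, runs the pencil of planes through it (e.g.\ $Z-W=m(X-Y)$), writes the residual conic as a symmetric $3\times 3$ matrix in $X,Y,W$, and factors its determinant, a quartic in $m$ whose roots $0,\ \lambda\delta/\mu,\ \omega\lambda\delta/\mu,\ \omega^{2}\lambda\delta/\mu$ are precisely where the conic degenerates into a pair of lines; repeating over several pencils and discarding duplicates yields the same eighteen. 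Your method makes the $6\times 3$ combinatorial structure and the origin of the cube roots transparent and needs no choice of base line; the paper's method is a single determinant computation per pencil and naturally organizes the lines by incidence with the known ones. Both arguments ultimately rely on the classical count of twenty-seven lines on a smooth cubic surface for completeness (you invoke it explicitly, the paper implicitly when it stops after ``eliminating duplicates''), and neither carries out in print the identification of the solutions with the displayed closed forms --- which, as you correctly flag, is where essentially all the labor lies. The final orbit assertion is left as a finite mechanical check in both treatments.
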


\begin{proof}
The first nine lines are obvious ones; they are obtained by letting
one of the factors of the left hand side of the
equation \eqref{cubic-legendre} equal $0$ and one of the right hand
side equal $0$.  The other eighteen lines are obtained as follows.
Take a factor from the left hand side and another from the right hand
side, say $Z-Y$ and $X-\lambda W$.  Take a parameter $m$ and let
$Z-Y=m(X-\lambda W)$. We will take the intersections of the cubic
surface with this family of planes. By construction, they always
contain the line $Z - Y = X - \lambda W = 0$. The family of residual
conics will degenerate to pairs of lines at suitable values of $m$.

Concretely, we replace $Z$ by $m(X-Y)+W$ in the equation of the
surface, and we obtain a family of conics in $X,Y,W$:
\[
(m^{3}-1)X^2 - (\mu-1)mW^2 + m^3Y^2 
- (\mu-2)m^{2}XW -(2m^3-\lambda)XY + (\mu-2)m^{2}YW = 0.
\]
Writing this equation in matrix form:
\[\renewcommand{\arraystretch}{1.2}
\begin{pmatrix} X & Y & W\end{pmatrix}
\left(\begin{array}{*3c}
m^{3} - 1 & -m^{3}+\lambda/2   & -(\mu-1)m^{2}/2\\
-m^{3}+\lambda/2 & m^{3} & (\mu-2)m^{2}/2 \\
-(\mu-1)m^{2}/2 & (\mu-2)m^{2}/2 & - (\mu-1)m 
\end{array}\right)
\begin{pmatrix} X \\ Y \\ W \end{pmatrix} = 0.
\]
We then calculate the determinant of this matrix:
\[
-(\lambda-1)m(\mu m -\lambda \delta)
(\mu m - \omega\lambda\delta) (\mu m -\omega^{2}\lambda\delta)/(4\mu).
\]
So, at
$m=0,\lambda\delta/\mu,\omega\lambda\delta/\mu,\omega^{2}\lambda\delta/\mu$,
the conic becomes a pair of lines. We repeat this process, and
eliminate the duplicates to obtain the list of all the twenty-seven
lines.
\end{proof}

The elliptic surface ${}^{t_{3}}F^{(3)}_{E_{\lambda},E_{\mu}}$ is
obtained from the family of plane cubic curves
\[
x_{2}(x_{2}-z)(x_{2}-\mu z) = t_{3}^{3} x_{1}(x_{1}-z)(x_{1}-\lambda z)
\]
and the rational point $(x_{1}:x_{2}:z)=(1:t_{3}:0)$. Let us recall
its equation (a specialization of \eqref{F3twist}, where the elliptic
curves are given by \eqref{cubic-legendre}):
\begin{align*}
t y^2 &= x^3  -27 (\lambda^2-\lambda+1) (\mu^2-\mu+1) x \\
      & \qquad + 729 \lambda^2 (\lambda-1)^2 t^3/4    + 729 \mu^2 (\mu-1)^2/(4 t^3) \\
      & \qquad + 27 (\lambda-2) (\lambda+1) (2 \lambda-1) (\mu-2) (\mu+1) (2 \mu-1)/2
\end{align*}

There are two
other rational points $(1:\omega t_{3}:0)$ and $(1:\omega^{2}
t_{3}:0)$, which become sections of
${}^{t_{3}}F^{(3)}_{E_{\lambda},E_{\mu}}$ given by
\begin{align*}
R_{1}&=\Bigl(-3\bigl(\omega^2(\lambda^2-\lambda+1)t^2+\omega(\mu^2-\mu+1)\bigl),
\\
& \qquad \qquad \qquad (2\omega+1)\bigl((\lambda+1)(\lambda-2)(2\lambda-1)t^3-(\mu+1)(\mu-2)(2\mu-1)\bigr)/2 \Bigr),
\\
R_{2}&=\Bigl(-3\bigl(\omega(\lambda^2-\lambda+1)t^2+\omega^{2}(\mu^2-\mu+1)\bigl),
\\
& \qquad \qquad \qquad  (2\omega+1)\bigl((\lambda+1)(\lambda-2)(2\lambda-1)t^3-(\mu+1)(\mu-2)(2\mu-1)\bigr)/2 \Bigr).
\end{align*}

The elliptic surface ${}^{t_{3}}F^{(3)}_{E_{\lambda},E_{\mu}}$ can be
obtained by blowing up the cubic surface \eqref{cubic-legendre} at
three points corresponding to these, namely, $(X:Y:Z:W)=(1:0:1:0)$,
$(1:0:\omega:0)$, and $(1:0:\omega^{2}:0)$.

\begin{theorem}
Let $\lambda, \mu$ be such that $E_{\lambda}$ and $E_{\mu}$ are not
isomorphic over $\bar k$.  The Mordell-Weil lattice
${}^{t_{3}}F^{(3)}_{E_{\lambda},E_{\mu}}(\kbar(t_{3}))$
is of type $E_{8}$, and generated by $R_{1}$, $R_{2}$ above and the
sections coming from the the twenty-seven lines in the cubic surface.
\end{theorem}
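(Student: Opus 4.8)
The plan is to exploit the blow-up description recorded just above the statement, together with the classical fact that the twenty-seven lines generate the Picard group of a smooth cubic surface over $\Z$. First I would note that $R := {}^{t_3}F^{(3)}_{E_\lambda,E_\mu}$ is a \emph{rational} elliptic surface: by the preceding lemma it is birational to the cubic surface \eqref{cubic-legendre}, hence rational, and it is relatively minimal with fibration given by $t_3$. Its topological Euler number is therefore $12$. For $E_\lambda \not\cong E_\mu$ the discriminant has twelve distinct simple roots away from $t_3 = 0, \infty$, and the quadratic twist converts the two $\I_0^*$ fibers of $F^{(3)}$ at $t_3 = 0, \infty$ into fibers of good reduction. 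Thus all singular fibers are irreducible (generically twelve of type $\I_1$), the trivial lattice is just $U = \langle O, F\rangle$, and by Shioda--Tate together with the Oguiso--Shioda classification \cite{Oguiso-Shioda, Shioda:MWL} we get $\MW(R) \cong E_8$. This proves the first assertion.

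For the generating set I would argue at the level of N\'eron--Severi lattices. Write $V$ for the cubic surface \eqref{cubic-legendre}. As recalled before the statement, $R$ is obtained from $V$ by blowing up the three base points $(1:0:1:0)$, $(1:0:\omega:0)$, $(1:0:\omega^2:0)$ of the anticanonical pencil $t_3 = Y/W$, and these three exceptional curves are exactly the sections $O$, $R_1$, $R_2$. Consequently
\[
\NS(R) = \pi^*\Pic(V) \oplus \Z O \oplus \Z R_1 \oplus \Z R_2, \qquad F = \pi^*(-K_V) - O - R_1 - R_2.
\]
Since the twenty-seven lines generate $\Pic(V) \cong \Z^{1,6}$ over $\Z$, the twenty-seven lines together with $O, R_1, R_2$ generate $\NS(R)$ over $\Z$. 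Passing to $\MW(R) = \NS(R)/\langle O, F\rangle$, the class $O$ dies, so the images of the twenty-seven lines together with $R_1$ and $R_2$ generate $\MW(R) \cong E_8$ \emph{integrally}. Because we generate all of $\NS(R)$ over $\Z$, and not merely a finite-index sublattice, there is no index left to saturate; this is what makes the N\'eron--Severi approach cleaner here than a direct height computation. (In fact $R_1 + R_2 \equiv \pi^*(-K_V) \bmod \langle O, F\rangle$ already lies in the span of the lines, so a single $R_i$ suffices to complete a basis; we keep both for symmetry.)

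The point that must be pinned down is the passage ``line $\mapsto$ section.'' A line of $V$ not passing through the three base points meets a general fiber $F = -K_V$ in $-K_V\cdot L = 1$ point and is thus already a section; in general one takes the class of the proper transform in $\NS(R)/\langle O,F\rangle$ and represents it by the unique corresponding section, which is the ``section coming from the line.'' Verifying that these agree with the geometric sections obtained from the lines of Proposition~\ref{prop:27-lines} is the only bookkeeping step, and it is where I expect the bulk of the (routine) work to lie. Alternatively, and in the computational spirit of Theorems~\ref{th:F4-RES} and \ref{th:F^6-RES}, one may simply select eight of the resulting sections, compute their height pairing matrix, and check that it is even, of rank $8$, and of determinant $1$; unimodularity then forces the lattice to be $E_8$ and the chosen sections to be a basis. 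The explicit formulas in Proposition~\ref{prop:27-lines} and for $R_1, R_2$ reduce this to a finite calculation, which may be carried out in the auxiliary files.
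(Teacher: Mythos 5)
Your proposal is correct and follows essentially the same route as the paper's proof: both establish that the twist is a rational elliptic surface with only irreducible singular fibers (hence Mordell--Weil lattice $E_8$), invoke the fact that the twenty-seven lines generate the Picard group of the cubic surface, and then pass through the three-point blow-up and the Shioda--Tate quotient to conclude that the lines together with $R_1, R_2$ generate the Mordell--Weil group. Your write-up merely fills in details (the fiber analysis at $t_3=0,\infty$ and the line-to-section dictionary) that the paper leaves implicit.
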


\begin{proof}
For generic $E_{\lambda}$ and $E_{\mu}$,
${}^{t_{3}}F^{(3)}_{E_{\lambda},E_{\mu}}$ is a rational elliptic
surface with only irreducible singular fibers.  Thus, its Mordell-Weil
lattice is isomorphic to $E_{8}$. The N\'eron-Severi group of the
cubic surface is generated by the classes of the twenty-seven lines,
which form a lattice isometric to $E_6$ (see Manin
\cite{Manin}). Therefore, the N\'eron-Severi group of the rational
surface ${}^{t_{3}}F^{(3)}_{E_{\lambda},E_{\mu}}$ is generated by the
exceptional divisors of blow-ups and the twenty-seven lines.
Transforming to the elliptic model, we see that the Mordell-Weil
lattice ${}^{t_{3}}F^{(3)}_{E_{\lambda},E_{\mu}}(\bar
k(\lambda,\mu)(t_{3}))$ is generated by $R_{1}$, $R_{2}$ and sections
coming from the twenty-seven lines.
\end{proof}

\begin{remark}
The sections $R_1$ and $R_2$ above, along with the sections coming
from lines $1$,$2$,$4$,$5$,$10$ and $12$, form a basis of the
Mordell-Weil lattice. Below, we display some of these sections; the
remaining ones are omitted for lack of space. The formulas for the
full basis may be obtained from the auxiliary files.
\end{remark}

\begin{align*}
R_3 &:= L_1 = \Big( 3\big( 3lt^2 + (l+1)(m+1)t + 3m \big)/t,  \\
& \qquad \qquad \quad -27\big( l(l+1)t^3 + 2l(m+1)t^2 \\
& \qquad \qquad \quad + 2(l+1)mt + m(m+1) \big)/(2t^2) \Big) \\
R_4 &:= L_2 = \Big( -3\big( 3(l-1)t^2 - (l-2)(m+1)t - 3m \big)/t, \\
&  \qquad \qquad \quad 27\big( (l-1)(l-2)t^3 + 2(l-1)(m+1)t^2 \\
& \qquad \qquad \quad -2(l-2)mt -m(m+1) \big)/(2t^2) \Big) \\
R_5 &:= L_4 = \Big( 3\big( 3lt^2 + (l+1)(m-2)t - 3(m-1)\big)/t,  \\
&  \qquad \qquad \quad -27\big( l(l+1)t^3 + 2l(m-2)t^2 - 2(m-1)(l+1)t \\
& \qquad \qquad \quad - (m-1)(m-2)\big)/(2t^2) \Big) \\
R_6 &:= L_5 = \Big(-3\big( 3(l-1)t^2 - (l-2)(m-2)t + 3(m-1)\big)/t,  \\
&  \qquad \qquad \quad 27\big( (l-1)(l-2)t^3 + 2(l-1)(m-2)t^2 \\
& \qquad \qquad \quad + 2(l-2)(m-1)t + (m-1)(m-2)\big)/(2t^2) \Big)
\end{align*}

\begin{remark}
The sections coming from the twenty-seven lines form a sublattice of
type $E_{7}$ in the Mordell-Weil lattice, and the first nine lines in
Proposition~\ref{prop:27-lines}, which are defined over~$k$, form a
sublattice of type $A_{5}$. These nine lines together with the
sections $R_1$ and $R_2$ from the blowup generate a sublattice of type
$E_7$.
\end{remark}

\begin{corollary}
Let $\lambda, \mu \in \kbar$ be such that $E_\lambda$ and $E_\mu$ are
not isomorphic over $\kbar$. The field of definition of
${}^{t_{3}}F^{(3)}_{E_{\lambda},E_{\mu}}(\bar k(t_{3}))$ is
$k(\lambda,\mu,\delta,\omega)$.  If $E_{1}$ and $E_{2}$ are not
isomorphic over $\kbar$, then the field of definition of
${}^{t_{3}}F^{(3)}_{E_{1},E_{2}}(\bar k(t_{3}))$ is
$k\bigl(E_{1}[2],E_{2}[2],(\Delta_{E_{2}}/\Delta_{E_{1}})^{1/6},\omega\bigr)$.
\end{corollary}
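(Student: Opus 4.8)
The plan is to handle the two assertions separately, in each case bounding the field of definition above by writing the generators explicitly and below by exhibiting a nontrivial Galois action on them. For the first assertion I would invoke the preceding theorem, which furnishes an explicit generating set for the $E_8$-lattice ${}^{t_{3}}F^{(3)}_{E_\lambda,E_\mu}(\bar k(\lambda,\mu)(t_3))$: the sections $R_1,R_2$ arising from the blow-up, together with the twenty-seven sections coming from the lines listed in Proposition~\ref{prop:27-lines}. Reading off the formulas, $R_1$ and $R_2$ are defined over $k(\lambda,\mu,\omega)$ (the only irrationality being $2\omega+1$), the nine obvious lines over $k(\lambda,\mu)$, and the remaining eighteen lines over $k(\lambda,\mu,\delta,\omega)$, since the only quantities beyond $\lambda,\mu$ entering their coordinates are $\omega$, $\delta$ and $\delta^2$. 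Hence every generator, and so the whole Mordell--Weil lattice, is defined over $k(\lambda,\mu,\delta,\omega)$, which gives the upper bound.

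For the reverse inclusion I would analyze $\Gal(k(\lambda,\mu,\delta,\omega)/k(\lambda,\mu))$. Since $k(\lambda,\mu,\delta,\omega)$ is the splitting field of $x^3-\delta^3$ over $k(\lambda,\mu)$, with $\delta^3=\mu(\mu-1)/(\lambda(\lambda-1))$ not a cube, this group is $S_3$; each of its three elements of order two sends $\omega\mapsto\omega^2$, while the two elements of order three move $\delta$. Because $R_1$ and $R_2$ involve $\omega$ but not $\delta$ and are interchanged by $\omega\mapsto\omega^2$, every order-two element moves $R_1$; and each order-three element permutes the $\delta$-dependent lines of Proposition~\ref{prop:27-lines} nontrivially (for instance $\delta\mapsto\omega\delta$ carries the line with entries $\delta^2,\delta$ to the one with entries $\omega^2\delta^2,\omega\delta$). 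Thus no nonidentity automorphism fixes the entire generating set, the pointwise stabilizer is trivial, and the field of definition is exactly $k(\lambda,\mu,\delta,\omega)$.

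For the second assertion I would descend to $K=k(E_1[2],E_2[2])$. Over $K$ both curves have full rational $2$-torsion, so they may be brought to Legendre form $E_{\lambda_0},E_{\mu_0}$ with $\lambda_0,\mu_0\in K$ (the Legendre parameters being cross-ratios of the $2$-torsion). As the construction of $F^{(3)}$ depends only on the isomorphism classes of the two curves (Section~\ref{sec:defns}), the surface ${}^{t_{3}}F^{(3)}_{E_1,E_2}$ is isomorphic over $K$ to ${}^{t_{3}}F^{(3)}_{E_{\lambda_0},E_{\mu_0}}$; and since $E_1,E_2$ are not isomorphic over $\bar k$, the Mordell--Weil lattice is still the full $E_8$ generated by the specialized lines and $R_1,R_2$. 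The argument of the first part then applies with $k(\lambda,\mu)$ replaced by $K$, yielding field of definition $K(\delta_0,\omega)$, where $\delta_0=(\mu_0(\mu_0-1))^{1/3}/(\lambda_0(\lambda_0-1))^{1/3}$. Finally, since the Legendre discriminant is $\Delta_{E_\lambda}=16\lambda^2(\lambda-1)^2$, we get $\delta_0=(\Delta_{E_{\mu_0}}/\Delta_{E_{\lambda_0}})^{1/6}$; and because the isomorphisms to Legendre form over $K$ scale each discriminant by a twelfth power in $K^\times$, the ratios $\Delta_{E_{\mu_0}}/\Delta_{E_{\lambda_0}}$ and $\Delta_{E_2}/\Delta_{E_1}$ differ by a twelfth power, so their sixth roots differ by a square in $K^\times$ and $K(\delta_0)=K((\Delta_{E_2}/\Delta_{E_1})^{1/6})$. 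This identifies the field of definition with $k(E_1[2],E_2[2],(\Delta_{E_2}/\Delta_{E_1})^{1/6},\omega)$.

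The routine part is the upper bound, read off directly from the explicit formulas. The main obstacles are, first, confirming that the pointwise stabilizer of the generating set is trivial, which requires checking against the full list of Proposition~\ref{prop:27-lines} that each order-two and each order-three automorphism moves at least one generator (so that none fixes the whole set); and second, the model-independence step, where one must verify that the a priori model-dependent symbol $(\Delta_{E_2}/\Delta_{E_1})^{1/6}$ generates the same extension of $K$ as $\delta_0$, the remaining ambiguities (roots of unity and squares in $K^\times$) being absorbed into $K$ and into the already-adjoined $\omega$. I expect this last identification to be the genuinely delicate point, since $(\Delta_{E_2}/\Delta_{E_1})^{1/6}$ is only well defined up to such factors.
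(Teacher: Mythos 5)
Your proposal is correct and follows exactly the route the paper intends: the corollary is stated without proof as an immediate consequence of the preceding theorem and Proposition \ref{prop:27-lines}, namely that the explicit generators ($R_1$, $R_2$ and the twenty-seven lines) are visibly defined over $k(\lambda,\mu,\delta,\omega)$ and are permuted nontrivially by every nonidentity element of the Galois group, with the second assertion obtained by passing to Legendre form over $k(E_1[2],E_2[2])$ and noting $\delta=(\Delta_{E_\mu}/\Delta_{E_\lambda})^{1/6}$. Your additional care about the twelfth-power ambiguity of the discriminant under change of model and the absorption of sixth roots of unity into $K(\omega)$ is exactly the right bookkeeping, and the paper's own definition of $\delta$ confirms the identification.
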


\subsection{$F^{(6)}(\kbar(t_{6}))$ in the generic case} \label{rationalF3twist}

Let $Q_1, \dots, Q_8$ be the basis of the Mordell-Weil group for
$F^{(3)}_{E_{u},E_{v}}$ described in Theorem~\ref{th:F^(3)}. By abuse
of notation, let $Q_1, \dots, Q_8$ be their base change (pullback) to
$F^{(6)}_{E_{r,f},E_{q,g}}$ defined by \eqref{eq:generic-F6}, by the
map $\lambda = (r+1)(r-3)^3/\big((r-1)(r+3)^3\big)$ and $\mu =
(q+1)(q-3)^3/\big((q-1)(q+3)^3\big)$. Similarly, let $R_1, \dots,
R_8$ be the base change of the basis of
${}^{t_{3}}F^{(3)}_{E_{\lambda},E_{\mu}}(\bar k(t_{3}))$.  ($R_7$ and
$R_8$ are shown only in the auxiliary files.)  Define
\begin{align*}
S_1 &= (Q_2 + Q_3 + Q_6 + Q_8 + R_1)/2 \\
S_2 &= (Q_1 + Q_3 + Q_7 + Q_8 - R_2)/2 \\
S_3 &= (Q_1 + Q_3 + R_1 - R_7 + R_8)/2 \\
S_4 &= (Q_2 + Q_3 + R_4 + R_5 - R_8)/2.
\end{align*}
These are sections of $F^{(6)}_{E_{r,f},E_{q,g}}$, i.e. the
expressions in parentheses are (uniquely) divisibe by $2$ in the
Mordell-Weil group.  Explicit formulas are also given in auxiliary
files.

\begin{theorem} \label{th:F^6-cubic}
Let $(r,f), (q,g) \in X(6)(\kbar)$ be such that $E_{r,f}$ and
$E_{q,g}$ are not isogenous. The sections $Q_1, \dots, Q_8, R_3, R_4,
R_5, R_6, S_1, S_2, S_3, S_4$ form a basis of the Mordell-Weil group
of $F^{(6)}_{E_{r,f},E_{q,g}}(\kbar (t_{6}))$.
\end{theorem}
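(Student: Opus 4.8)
The plan is to establish the result by the same discriminant-matching principle used in Theorems \ref{th:F4-RES}, \ref{th:F^5} and \ref{th:F^6-RES}: exhibit sixteen sections that genuinely lie in $\MW(F^{(6)})$, compute the determinant of their height pairing matrix, and check that it equals $6^4$, the discriminant of the full Mordell-Weil lattice recorded in the proof of Theorem \ref{th:F^6-RES}. Since a full-rank sublattice whose discriminant agrees with that of the ambient lattice must coincide with it, this forces the sixteen sections to be a basis. The only point that is not purely formal is verifying that the listed sections — in particular the half-sums $S_1,\dots,S_4$ — really belong to $\MW(F^{(6)})$.

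First I would organize the ambient structure using Lemma \ref{q-twist}, applied with $E = F^{(3)}_{E_{\lambda},E_{\mu}}$, $T = t_3$ and $\sqrt{T} = t_6$ (recall $t_3 = t_6^2$). It identifies $F^{(3)}(\bar k(t_3)) \oplus {}^{t_3}\!F^{(3)}(\bar k(t_3))$ with a finite index subgroup of $F^{(3)}(\bar k(t_6)) = F^{(6)}(\bar k(t_6))$. Consequently the pullbacks $Q_1,\dots,Q_8$ of the $F^{(3)}$-basis of Theorem \ref{th:F^(3)} and the images $R_1,\dots,R_8$ of the twist basis (the blowup sections $R_1,R_2$ together with the six lines) are genuine sections of $F^{(6)}$. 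Moreover these two families are orthogonal for the height pairing: they are respectively the $\iota$-invariant and $\iota$-anti-invariant sections, where $\iota\colon t_6 \mapsto -t_6$, so $\langle Q, R\rangle = \langle \iota Q, \iota R\rangle = -\langle Q, R\rangle$. Thus $\langle Q_1,\dots,Q_8,R_1,\dots,R_8\rangle$ carries a block-diagonal Gram matrix: twice the height matrix of $F^{(3)}$ (discriminant $2^4 3^4$ after the degree-two base change, starting from the value $3^4/2^4$ of Theorem \ref{th:F^(3)}) on one block, and twice the $E_8$-form (discriminant $2^8$) on the other. Its discriminant is therefore $2^{12} 3^4$, so this subgroup has index $2^4 = 16$ in the full lattice of discriminant $6^4 = 2^4 3^4$.

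Next I would check that $S_1,\dots,S_4$ lie in $\MW(F^{(6)})$, i.e. that the four stated integral combinations of the $Q_i$ and $R_j$ are divisible by $2$. Since $\MW(F^{(6)})$ is torsion-free by Theorem \ref{structure-theorem1-shioda}, such a halving is unique when it exists, and one verifies existence by exhibiting the explicit sections (as recorded in the auxiliary files) and confirming that twice each equals the asserted combination under the fibrewise group law. These four classes generate precisely the missing index-$16$ enlargement: a short linear-algebra computation shows that $Q_1,\dots,Q_8,R_3,R_4,R_5,R_6,S_1,S_2,S_3,S_4$ recover the remaining generators $R_1,R_2,R_7,R_8$ (for instance $R_1 = 2S_1 - Q_2 - Q_3 - Q_6 - Q_8$), hence span the same lattice as $\langle Q_i, R_j, S_k\rangle$.

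Finally I would assemble the $16\times 16$ height pairing matrix of $Q_1,\dots,Q_8,R_3,R_4,R_5,R_6,S_1,S_2,S_3,S_4$ — obtained from the block-diagonal matrix above together with the defining relations for the $S_i$ — and compute its determinant, expecting the value $6^4$; matching this against Shioda's discriminant then completes the proof. The main obstacle is the divisibility verification for the $S_i$: producing the halved sections explicitly is the genuinely computational step, and it is what makes the index-$16$ saturation effective rather than merely abstract. The determinant evaluation itself is mechanical and best delegated to computer algebra.
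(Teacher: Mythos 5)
Your proposal is correct and follows essentially the same route as the paper: decompose via Lemma \ref{q-twist} into the $F^{(3)}$ and twisted-$F^{(3)}$ blocks, compute the discriminant of the $\langle Q_i, R_j\rangle$ sublattice as $(3^4/2^4)\cdot 2^8\cdot 1\cdot 2^8 = 6^4\cdot 2^8$, observe that adjoining the halved sections $S_1,\dots,S_4$ gives an index-$16$ overlattice of discriminant $6^4$, and match against Shioda's value. The only difference is that you spell out details the paper leaves implicit (the $\iota$-orthogonality of the two blocks, the recovery of $R_1,R_2,R_7,R_8$, and the divisibility check for the $S_i$), which is a faithful elaboration rather than a different argument.
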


\begin{proof}
By construction and base change, the lattice spanned by the $Q_i$'s
and the $R_i$'s has discriminant $(3^4/2^4) \cdot 2^8 \cdot 1 \cdot
2^8 = 6^4 \cdot 2^8$. Since the lattice spanned by the new basis is an
overlattice of index $16$, it has discriminant $6^4$, which matches
the discriminant of the Mordell-Weil lattice of $F^{(6)}$, as computed
by Shioda~\cite{Shioda:sphere-packing}. Therefore it must be the full
Mordell-Weil group.
\end{proof}

\begin{remark}
We have described two different bases for the Mordell-Weil lattice
$F^{(6)}(\kbar(t_{6}))$, obtained through two different methods: first
by using rational elliptic surfaces parametrized by $s_{6,i}$, and
second by using $F^{(3)}$ and the cubic surface that is a twist of
$F^{(3)}$. The first method, though similar in spirit to that for
$F^{(4)}$ and $F^{(5)}$, is significantly more difficult to carry out
computationally. The change of basis matrix for these two bases is
also given in the auxiliary files.
\end{remark}

\section{Singular K3 surfaces} \label{sec:singular}

In this section we consider $K3$ surfaces with Picard number~$20$.
These surfaces are called \emph{singular $K3$ surfaces} because they
do not involve any moduli. We are interested in elliptic $K3$ surfaces
defined over $\Q$ whose Mordell-Weil rank (over $\Qbar$) is
maximal~$18$. If the Mordell-Weil rank of an elliptic $K3$ surface
is~$18$, the underlying $K3$ surface must be a singular $K3$
surface. Our goal in this section is to construct as many such
elliptic $K3$ surfaces as possible.

Singular $K3$ surfaces are closely related to elliptic curves with
complex multiplication.  We use work of
Shioda-Mitani~\cite{Shioda-Mitani}, Shioda-Inose~\cite{Shioda-Inose},
Inose~\cite{Inose:singular-K3}, and the theory of complex
multiplication (see for example \cite{Cox-primes}).  Shioda and Inose
\cite{Shioda-Inose} show that a complex singular $K3$ surface $X$ is
what we call the Inose surface $\Ino(E_{1},E_{2})$ for some elliptic
curves $E_{1}$ and $E_{2}$ that have complex multiplication and are
isogenous to each other.  More specifically, we have

\begin{theorem}[Shioda-Inose~\cite{Shioda-Inose}]\label{th:singular-K3}
There is a one-to-one correspondence between the set of isomorphism
classes of complex singular $K3$ surfaces and the set of equivalence
classes of even positive definite Euclidean lattices, or equivalently,
positive definite integral binary quadratic forms, with respect to
$\SL_{2}(\Z)$\textup{:}
\begin{align*}
&\{\textup{singular $K3$ surfaces over $\C$}\}/\textup{$\C$-isomorphisms}
\\
&\overset{1:1}{\longleftrightarrow}
\left\{\left.\left.
\begin{pmatrix} 2a & b \\ b & 2c\end{pmatrix}\,\right|\,
a,b,c\in \Z,\quad a, c>0, \quad b^{2}-4ac<0\right\}\right/\SL_{2}(\Z)
\\
&\overset{1:1}{\longleftrightarrow}
\{ ax^{2}+bxy+cy^{2}\mid a,b,c\in \Z, \quad a, c>0, \quad b^{2}-4ac<0\}/\SL_{2}(\Z)
\end{align*}
\end{theorem}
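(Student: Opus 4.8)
The plan is to analyze the map $X \mapsto T_X$ sending a singular K3 surface to its transcendental lattice $T_X = \NS(X)^{\perp} \subset H^2(X,\Z)$, and to show that it descends to the asserted bijection. First I would record the lattice-theoretic input: since $H^2(X,\Z)$ is the even unimodular K3 lattice of signature $(3,19)$ and $\rho(X) = 20$, the lattice $\NS(X)$ has signature $(1,19)$, and its orthogonal complement $T_X$ is even of signature $(2,0)$, hence a positive-definite even lattice of rank two, i.e.\ a form of the type listed. The Hodge structure equips $T_X \otimes \R$ with a natural orientation, namely the ordered basis $(\Re \omega_X, \Im \omega_X)$ where $\C\,\omega_X = H^{2,0}(X)$; since any isomorphism of K3 surfaces acts holomorphically and so preserves $H^{2,0}$ and this orientation, isomorphic surfaces yield oriented-isometric lattices. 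This is exactly why the correct equivalence on the quadratic-form side is $\SL_2(\Z)$ (oriented) rather than the full $\GL_2(\Z)$, and it shows the map is well defined.

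For injectivity, suppose two singular K3 surfaces $X$ and $X'$ have oriented-isometric transcendental lattices. The key point is that for a positive-definite rank-two lattice $T$, a Hodge structure of K3 type is the datum of an isotropic line $\C\,\omega \subset T\otimes\C$ with $\omega \cdot \bar\omega > 0$; there are exactly two such lines, interchanged by complex conjugation, so fixing the orientation pins down the Hodge structure up to isomorphism. Thus an oriented isometry $T_X \cong T_{X'}$ is automatically a Hodge isometry after rescaling the period. I would then extend this to a Hodge isometry of all of $H^2$: by Nikulin's theory of primitive embeddings into the unimodular K3 lattice, the isometry of $T$ induces a matching of discriminant forms and hence glues with a suitable isometry of the N\'eron--Severi parts to a global isometry $H^2(X,\Z) \cong H^2(X',\Z)$. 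The strong Torelli theorem for K3 surfaces then produces an isomorphism $X \cong X'$.

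For surjectivity, given a form with matrix $\left(\begin{smallmatrix} 2a & b \\ b & 2c \end{smallmatrix}\right)$, I would first realize it as the transcendental lattice of an abelian surface of product type. By the work of Shioda--Mitani, every positive-definite even binary quadratic form occurs as $T(E_1 \times E_2)$ for a suitable pair of isogenous CM elliptic curves $E_1, E_2$ (the form being the intersection form on the rank-two transcendental part of $H^2(E_1\times E_2)$). The Shioda--Inose structure recalled in the remark following the definition of $\Ino(E_1,E_2)$ then supplies a Hodge isometry $T(E_1\times E_2) \cong T(\Ino(E_1,E_2))$; crucially, passing to the Inose surface recovers the transcendental lattice on the nose, undoing the doubling $T(\Km(E_1\times E_2)) \cong T(E_1\times E_2)\langle 2 \rangle$ that occurs for the intermediate Kummer surface. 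Hence $X = \Ino(E_1,E_2)$ is a singular K3 surface with $T_X$ the prescribed form, establishing surjectivity.

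The main obstacle is the injectivity step, or more precisely the passage from an abstract oriented isometry of the rank-two transcendental lattices to an honest isomorphism of surfaces: this is where one must invoke the surjectivity of the period map together with the strong Torelli theorem, and carefully glue the discriminant forms so that the isometry of $T_X$ extends across $\NS(X)$ to all of $H^2$. The surjectivity half rests on the external input of Shioda--Mitani, while the orientation bookkeeping (securing $\SL_2(\Z)$ rather than $\GL_2(\Z)$, and correctly distinguishing $X$ from its complex conjugate $\overline{X}$) requires care but is routine once this framework is in place.
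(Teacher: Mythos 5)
The paper does not prove this theorem: it is quoted verbatim from Shioda--Inose, and the only argumentative content the paper supplies is the explicit recipe for the surjectivity direction (the curves $E_1,E_2$ with $j$-invariants $j(\tau_1), j(\tau_2)$ built from the form, followed by the Inose construction). Your sketch is a correct outline of the standard proof and agrees with that recipe on the surjectivity half, including the key point that the Shioda--Inose double cover recovers $T(E_1\times E_2)$ on the nose rather than $T(E_1\times E_2)\langle 2\rangle$, with Shioda--Mitani as the external input. Where you diverge from the original argument is in the injectivity half: Shioda and Inose (writing before Nikulin's lattice-embedding machinery was available) reduce uniqueness to the Piatetski-Shapiro--Shafarevich correspondence for singular abelian and Kummer surfaces by going around the ``sandwich,'' whereas you argue directly on the K3 side via uniqueness of the primitive embedding $T_X\hookrightarrow\Lambda_{K3}$, gluing of discriminant forms, and the strong Torelli theorem. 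Both routes are valid; yours is the cleaner modern one and makes the role of the orientation (hence $\SL_2(\Z)$ rather than $\GL_2(\Z)$) completely transparent. The only points you gloss over are routine: in applying strong Torelli you must first adjust the global Hodge isometry by $\pm 1$ and by reflections in $(-2)$-classes so that it preserves the K\"ahler cone (these act trivially on $T_X$, so this costs nothing), and the verification that Nikulin's uniqueness criterion applies ($22-2\ge 2+\ell(A_{T})$ for a rank-two $T$) should be stated.
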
 

In fact, Shioda-Inose~\cite{Shioda-Inose} construct a singular $K3$
surface $X$ corresponding to the lattice $Q=\begin{pmatrix} 2a & b
\\ b & 2c\end{pmatrix}$, or the quadratic form $ax^{2}+bxy+cy^{2}$, as
follows.  First, let $\tau_{1}$ and $\tau_{2}$ be the points on the
upper half plane $\H$ given by
\begin{equation}\label{eq:tau}
\tau_{1}= \frac{-b+\sqrt{D}}{2a}, \quad
\tau_{2}= \frac{b+\sqrt{D}}{2}, \quad \text{where} \quad D = b^{2}-4ac.
\end{equation}
Let $j(\tau)$ be the elliptic modular function defined on $\H$, and
let $E_{1}$ and $E_{2}$ be elliptic curves whose $j$-invariants are
$j(\tau_{1})$ and $j(\tau_{2})$ respectively.  For example, $E_{i}$
can be given by
\[
E_{i}:y^2 = x^3-\frac{3j(\tau_{i})}{j(\tau_{i})-1728}\,x+\frac{2j(\tau_{i})}{j(\tau_{i})-1728}, \quad i=1,2.
\]
Then, the Inose surface $\Ino(E_{1},E_{2})$ is a singular $K3$ surface
corresponding to~$Q$.

First, consider the case where $ax^{2}+bxy+cy^{2}$ is primitive, that
is, $\gcd(a,b,c)=1$.  Since $b$ and $D$ have the same parity, on the
upper half plane~$\H$, $\tau_{2}=(b+\sqrt{D})/2$ represents the same
point as the root $\sqrt{D}/2$ or $(-1+\sqrt{D})/2$ of the trivial
form
\[
\begin{cases}
x^{2}+\dfrac{-D}{4}y^{2} & \text{if $D\equiv 0 \bmod 4$,}
\\[1ex]
x^{2}+xy+\dfrac{1-D}{4}y^{2} & \text{if $D\equiv 1 \bmod 4$}
\end{cases}
\]
modulo the action of the modular group $\SL_{2}(\Z)$.  The lattice
$\sO=\<1,\tau_{2}\>$ spanned by $1$ and $\tau_{2}$ is an order in the
imaginary quadratic field $K=\Q(\sqrt{D})$ (in fact, it is the unique
order of discriminant $D$), and the lattice
$\mathfrak{a}=\<1,\tau_{1}\>$ is a proper ideal of an order.  It is
well-known from the theory of complex multiplication that
$j(\sO)=j(\tau_{1})$ and $j(\mathfrak{a})=j(\tau_{2})$ are conjugate
roots of the class equation $H_{\sO}(X)=0$ (see
\cite[\S13]{Cox-primes}).  The degree of the class equation is the
class number $h(\sO)=h_{D}$.

\begin{theorem}\label{th:classno=2}
Let $D$ be a negative integer $\equiv0$ or $1\bmod 4$.  Suppose that
its class number $h_{D}$ equals~$2$, and let $ax^{2}+bxy+cy^{2}$ be
the nontrivial element of the class group $\textit{Cl}(D)$.  Then, the
Inose surface $\Ino(E_{1},E_{2})$ corresponding to $ax^{2}+bxy+cy^{2}$
has a model defined over $\Q$.  Furthermore, the Mordell-Weil lattices
$F^{(n)}_{E_{1},E_{2}}(\Qbar(t_{n}))$, $n=5,6$, constructed from
$\Ino(E_{1},E_{2})$ have rank~$18$.
\end{theorem}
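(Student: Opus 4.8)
The plan is to treat the two assertions separately: first the existence of a model over $\Q$, which is where the real work lies, and then the rank, which will follow immediately from Proposition~\ref{prop:Fn-MWrank} once we pin down $h=\rank\Hom(E_1,E_2)$. The key input for the descent is already recorded in the discussion preceding the theorem: since $h_D=2$, the class equation $H_{\sO}(X)=0$ has degree $2$, so $j_1:=j(\tau_1)$ and $j_2:=j(\tau_2)$ are its two conjugate roots. In particular $j_1\neq j_2$, the elementary symmetric functions $j_1+j_2$ and $j_1j_2$ lie in $\Q$, and since $D\neq-3,-4$ we also have $j_i\neq0,1728$.

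Taking for $E_1,E_2$ the models of Section~\ref{sec:singular}, so that the Weierstrass coefficients $a,b,c,d$ of \eqref{ell} are one fixed rational function evaluated at $j_1$ and at $j_2$ respectively, I would first check that $ac$, $bd$ and the product $\Delta_{E_1}\Delta_{E_2}$ are \emph{symmetric} rational functions of the conjugate pair $(j_1,j_2)$, hence lie in $\Q$. The Inose equation \eqref{eq:F1} then has the shape $Y^2=X^3-3ac\,X+\tfrac{1}{64}\bigl(\Delta_{E_1}t_1+864\,bd+\Delta_{E_2}/t_1\bigr)$, in which only the coefficients $\Delta_{E_1},\Delta_{E_2}$ of $t_1$ and $t_1^{-1}$ fail to be rational. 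The crucial observation is that this last obstruction is removed by a single substitution on the base: replacing $t_1$ by $t_1/\Delta_{E_1}$ turns the coefficient of $t_1$ into $\tfrac{1}{64}$ and that of $t_1^{-1}$ into $\tfrac{1}{64}\Delta_{E_1}\Delta_{E_2}\in\Q$, leaving a Weierstrass equation over $\Q$. Its Kodaira--N\'eron model is then a model of $\Ino(E_1,E_2)$ defined over $\Q$. Conceptually, this substitution realizes the nontrivial element of $\Gal(\Q(j_1)/\Q)$, which swaps $E_1\leftrightarrow E_2$ and hence $\Delta_{E_1}\leftrightarrow\Delta_{E_2}$, as the base automorphism $t_1\mapsto 1/t_1$; it is precisely because $h_D=2$ that $\{j_1,j_2\}$ is a full conjugate pair and this works.

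For the rank, I would argue that $E_1$ and $E_2$ both have complex multiplication by the order $\sO$ of discriminant $D$ in $K=\Q(\sqrt{D})$, since $\mathfrak a=\<1,\tau_1\>$ and $\sO=\<1,\tau_2\>$ are commensurable lattices in $K$; hence $E_1$ and $E_2$ are isogenous over $\Qbar$, but not isomorphic since $j_1\neq j_2$. Consequently $\Hom(E_1,E_2)\otimes\Q$ is a one-dimensional $K$-vector space, so $h=\rank\Hom(E_1,E_2)=2$. Applying Proposition~\ref{prop:Fn-MWrank} in the case $j_1\neq j_2$ then gives $\rank F^{(n)}(\Qbar(t_n))=h+\min(4(n-1),16)=2+16=18$ for $n=5,6$.

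The main obstacle is the descent step. Verifying that the right combinations of coefficients are rational and locating the substitution that clears the remaining irrationality both rely essentially on $h_D=2$, which forces $[\Q(j_1):\Q]=2$ with $j_1,j_2$ a single Galois-conjugate pair; for $h_D>2$ the $j$-invariants generate a larger field and this elementary clearing of coefficients fails. Once the $\Q$-model is in hand, the rank computation is a bookkeeping consequence of Proposition~\ref{prop:Fn-MWrank} together with standard CM theory.
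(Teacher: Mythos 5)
Your proposal is correct and follows essentially the same route as the paper: the descent step is exactly the content of Lemma~\ref{lem:quadratic} (your substitution $t_1\mapsto t_1/\Delta_{E_1}$ is the paper's change of variable $T=\Delta_{E_1}^{1/6}t_6$, turning the coefficients into norms), and the rank statement is the same appeal to Proposition~\ref{prop:Fn-MWrank} for isogenous, non-isomorphic CM curves.
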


\begin{proof}
Since $h_{D}=2$, both $j(\tau_{1})$ and $j(\tau_{2})$ are conjugate
elements of a quadratic extension of $\Q$.  If we take $E_{1}$ and
$E_{2}$ to be conjugate to each other, then Lemma~\ref{lem:quadratic}
below assures that $\Ino(E_{1},E_{2})$ and all $F^{(N)}_{E_{1},E_{2}}$
has a model over~$\Q$.  Since $ax^{2}+bxy+cy^{2}$ corresponds to the
nontrivial element, $j(\tau_{2})\neq j(\tau_{1})$, and $E_{1}$ and
$E_{2}$ are not isomorphic. But they are isogenous, as they come from
ideals in the same quadratic field. Thus,
$F^{(n)}_{E_{1},E_{2}}(\Qbar(t_{m}))$, $n=5,6$ have rank~$18$ by
Proposition~\ref{prop:Fn-MWrank}.
\end{proof}

\begin{lemma}\label{lem:quadratic}
Let $E_{1}$ be an elliptic curve defined over an quadratic field
$\Q(\sqrt{d})$ and let $E_{2}$ be its conjugate.  Then, the elliptic
fibration $F^{(n)}_{E_{1},E_{2}}$ has a model defined over $\Q$.
\end{lemma}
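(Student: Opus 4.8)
The plan is to descend $F^{(n)}_{E_1,E_2}$ from $L=\Q(\sqrt d)$ to $\Q$ using the symmetry that interchanges the two elliptic curves. Let $\sigma$ generate $\Gal(L/\Q)$. Since $E_2=E_1^{\sigma}$, if $E_1$ has coefficients $a_2,a_4,a_6\in L$ then $E_2$ has coefficients $\sigma(a_2),\sigma(a_4),\sigma(a_6)$, so in the general equation coming from \eqref{ABCD},
\[
F^{(n)}_{E_1,E_2}:\ Y^2=X^3-\tfrac13 A\,X+\tfrac1{64}\bigl(B\,t_n^n+C+D\,t_n^{-n}\bigr),
\]
the quantities $A$ and $C$ are each a rational multiple of a norm $\mathrm N_{L/\Q}(m)=m\,\sigma(m)$ for a suitable $m\in L$, and hence lie in $\Q$, while $B=\Delta_{E_1}$ and $D=\Delta_{E_2}=\sigma(B)$ are $\sigma$-conjugate. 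Applying $\sigma$ to all the coefficients therefore sends $F^{(n)}_{E_1,E_2}$ to the surface $F^{(n)}_{E_2,E_1}$, which is the same equation with $B$ and $D$ interchanged.

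First I would record that the substitution $\phi\colon (X,Y,t_n)\mapsto (X,Y,1/t_n)$ carries $F^{(n)}_{E_1,E_2}$ isomorphically onto $F^{(n)}_{E_2,E_1}$: replacing $t_n$ by $1/t_n$ exchanges the $t_n^n$ and $t_n^{-n}$ terms, undoing the effect of $\sigma$. Crucially, $\phi$ has coefficients in the prime field and is an involution, covering the base involution $t_n\mapsto 1/t_n$ of $\P^1$. Viewing $\phi$ as the comparison isomorphism $\phi_{\sigma}\colon F^{(n)}_{E_1,E_2}\to {}^{\sigma}F^{(n)}_{E_1,E_2}=F^{(n)}_{E_2,E_1}$, the cocycle condition ${}^{\sigma}\phi_{\sigma}\circ\phi_{\sigma}=\mathrm{id}$ holds automatically, since $\phi$ is defined over $\Q$ and $(t_n\mapsto 1/t_n)^2=\mathrm{id}$. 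By the effectivity of Galois descent for quasi-projective varieties, this datum yields a model of $F^{(n)}_{E_1,E_2}$ over $\Q$. Because $\phi_{\sigma}$ respects the elliptic fibration and fixes the zero section, both descend as well, so the model is an elliptic surface over $\Q$.

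It remains to check that the descended base is $\P^1_{\Q}$, so that one obtains a genuine Weierstrass model over $\Q(W)$ for a single rational parameter $W$. For this I would exhibit explicit invariants of the twisted action (that of $\sigma$ combined with $t_n\mapsto 1/t_n$): the functions $W_1=t_n+t_n^{-1}$ and $W_2=\sqrt d\,(t_n-t_n^{-1})$ are both fixed and generate the invariant subfield of $L(t_n)$, subject to the relation $W_2^2=d\,(W_1^2-4)$. This conic carries the $\Q$-rational point $(W_1,W_2)=(2,0)$, hence is isomorphic to $\P^1_{\Q}$. Writing $B=P+Q\sqrt d$ with $P,Q\in\Q$, one has $B\,t_n^n+D\,t_n^{-n}=P\,(t_n^n+t_n^{-n})+Q\sqrt d\,(t_n^n-t_n^{-n})$, and since $t_n^n+t_n^{-n}$ is a polynomial in $W_1$ while $\sqrt d\,(t_n^n-t_n^{-n})$ equals $W_2$ times a polynomial in $W_1$, the whole Weierstrass equation becomes a polynomial in $W_1,W_2$, giving an explicit model over $\Q$.

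The essential content is thus the single observation that interchanging $E_1$ and $E_2$ is realized both by $\sigma$ and by the base involution $t_n\mapsto 1/t_n$; everything else is formal. The only point requiring care is the triviality of the descended base form: a priori descent could produce a nontrivial conic, which would still be a $\Q$-model but not a fibration over $\P^1_{\Q}$ in the usual sense. This is precisely what the rational point $(2,0)$ rules out, and I expect it to be the one place where the argument must be checked rather than merely asserted.
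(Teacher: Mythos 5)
Your proof is correct, but it takes a genuinely different route from the paper's. The paper's argument is a one-line explicit rescaling: writing $E_1: y^2=x^3+ax+b$ with $a,b\in\Q(\sqrt d)$ and $E_2$ its conjugate, the coefficients $-3a\bar a$ and $864\,b\bar b$ in \eqref{eq:F6} are already norms, and the substitution $T=\Delta_{E_1}^{1/n}t_n$ turns the remaining two terms into $T^n$ and $N_{\Q(\sqrt d)/\Q}(\Delta_{E_1})/T^n$, yielding a Weierstrass equation with all coefficients in $\Q$. That model is concrete (it is the one used later, e.g.\ in Table~\ref{tbl:EandF}), but the identification with the original surface is only defined after adjoining an $n$-th root of $\Delta_{E_1}$. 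Your argument instead runs honest Galois descent: the conjugation $\sigma$ swaps $B=\Delta_{E_1}$ and $D=\Delta_{E_2}$ while fixing $A$ and $C$ (which are norms), and the involution $t_n\mapsto 1/t_n$ undoes this swap, giving a descent datum whose cocycle condition is automatic; effectivity for (quasi-)projective varieties then produces a $\Q$-model that is isomorphic to $F^{(n)}_{E_1,E_2}$ already over $\Q(\sqrt d)$. You are also right to flag, and to resolve, the one nontrivial point your route introduces: the descended base is a priori a conic, and your invariants $W_1=t_n+t_n^{-1}$, $W_2=\sqrt d\,(t_n-t_n^{-1})$ with $W_2^2=d(W_1^2-4)$ and the rational point $(2,0)$ correctly show it is $\P^1_\Q$, after which the expansion of $B\,t_n^n+D\,t_n^{-n}$ in $W_1,W_2$ gives an explicit equation. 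The trade-off is that your model and the paper's are in general different $\Q$-forms of the same surface; the lemma only asks for existence, so both are complete proofs, with the paper's being shorter and yours giving the stronger conclusion that a model exists whose comparison isomorphism is defined over the quadratic field itself.
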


\begin{proof}
Let $E_{1}$ be given by $y^{2} = x^{3} + ax + b$, $a,b \in
\Q(\sqrt{d})$.  $E_{2}$ is given by $y^{2} = x^{3} + {\bar a}x + {\bar
  b}$, where $\bar\quad$ stands for the conjugate
$\sqrt{d}\mapsto-\sqrt{d}$.  Then the equation \eqref{eq:F6} can be
written as
\[
Y^2 = 
X^3 - 3\,a{\bar a}\,X  
+ \frac{1}{64}\Bigl(\Delta_{E_{1}}t_{6}^{6}
+864\,b{\bar b}
+\frac{\Delta_{E_{1}}\overline{\Delta_{E_{1}}}}{\Delta_{E_{1}}{t_{6}^{6}}}\Bigr).
\]
Thus, if we let $T=\Delta_{E_{1}}^{1/6}t_{6}$, the equation of
$F^{(6)}_{E_{1},E_{2}}$ is given by
\[
Y^2 = X^3 - 3\,N(a)\,X  
+ \frac{1}{64}\Bigl(T^{6}
+864\,N(b)+\frac{N(\Delta_{E_{1}})}{T^{6}}\Bigr),
\]
where $N:\Q(\sqrt{d})\to \Q$ is the norm.
\end{proof}

\begin{remark}
The lemma also follows from the proof of Proposition 8.1 in
\cite{Schuett}.
\end{remark}

\begin{remark}
An elliptic curve $E$ over the Hilbert class field $H$ of an imaginary
quadratic field $K$ with complex multiplication by $K$ is called a
\emph{$\Q$-curve} (in the original sense) if $E$ is isogenous over $H$
to all its Galois conjugates.  Theorem~\ref{th:classno=2} shows that
we can obtain elliptic $K3$ surfaces defined over $\Q$ with
Mordell-Weil rank~$18$ from a $\Q$-curve defined over a quadratic
Hilbert class field.  However, for our purpose, we do not need the
isogeny between $E$ and its Galois conjugate to be defined over $H$.
In fact, we will see some examples of elliptic curves with complex
multiplication by $K$ such that they are isogenous to their Galois
conjugates only over some extension of $H$.
\end{remark}

Next, consider the case where $ax^{2}+bxy+cy^{2}$ is not primitive.
Write $ax^{2}+bxy+cy^{2}=m(a'x^{2}+b'xy+c'y^{2})$, where $m>1$ and
$\gcd(a',b',c')=1$.  Define
\[
\tau_{1}'=\frac{-b'+\sqrt{D'}}{2a'}, \text{ and } 
\tau_{2}'=\frac{b'+\sqrt{D'}}{2}, \text{ where }
D'=b'^{2}-4a'c'.
\]
Then, we have $\tau_{1}=\tau_{1}'$ and $\tau_{2}=m\tau_{2}'$.  Now,
suppose that $h_{D'}=2$.  Then, $j(\tau'_{1})$ belongs to some
quadratic extension and $j(\tau'_{2})$ is its conjugate.  Since
$j(\tau'_{1})=j(\tau_{1})$, in order for our method to work, we need
that $j(\tau_{2})=j(m\tau'_{2})$ is conjugate to $j(\tau_{1})$.  But,
this implies $j(\tau'_{2})=j(\tau_{2})$ and thus the Inose surface
corresponding to $ax^{2}+bxy+cy^{2}$ and that to
$a'x^{2}+b'xy+c'y^{2}$ are isomorphic. So, in the nonprimitive case we
can reduce to the case of the primitive discriminant $D$,
corresponding to the latter quadratic form.

\subsection{Class number 1 case}
It is well-known that there are thirteen discriminants of class number~$1$. 
\[
h_{D}=1 \Longleftrightarrow D=-3,-4,-7,-8, -11,-12,-16,-19,-27,-28,-43,-67,-163.
\]
(See for example \cite[Theorem~7.30]{Cox-primes} and its references.)
From these we see that only the fields $\Q(\sqrt{-1})$,
$\Q(\sqrt{-3})$ and $\Q(\sqrt{-7})$ possess non-maximal order of class
number~$1$. In the following table, $D$ is a discriminant with
$h_{D}=1$, $f$ is the conductor of the order, $\tau$ is the root of
the quadratic form, and $E$ is an example of $E$ having $j(\tau)$ as
its $j$-invariant.
\begin{footnotesize}
\[
\renewcommand{\arraystretch}{1.4}\setlength{\arraycolsep}{6pt}
\begin{array}{c|c|c|c|c|c|l}
K & D & f & \textup{quadratic form} &\tau & j(\tau) & \hfil\textup{example of $E$} \\
\hline
\multirow{3}{*}{$\Q(\sqrt{-3})$} 
& -3 & 1 & x^{2}+xy+y^{2} &(-1+\sqrt{-3})/2 & 0 & 
y^{2} = x^{3} + 1 \\
& -12 & 2 & x^{2} + 3x^{2}& \sqrt{-3} & 54000 & 
y^{2} = x^{3} -15x +22 \\
& -27 & 3 & x^{2} +xy + 7y^{2}& (-1+3\sqrt{-3})/2 & -12288000 & 
y^{2} = x^{3} + 18x^{2} - 12x +2 \\\hline 
\multirow{2}{*}{$\Q(\sqrt{-1})$} 
& -4 & 1 & x^{2}+y^{2} & \sqrt{-1} & 1728 & 
y^{2} = x^{3} - x \\ 
& -16 & 2 & x^{2} + 4y^{2}& 2\sqrt{-1} &  287496 & 
y^{2} = x^{3} - 11 x - 14 \\\hline 
\multirow{2}{*}{$\Q(\sqrt{-7})$}
& -7 & 1 & x^{2} + xy + 2y^{2} & (-1+\sqrt{-7})/2 & -3375 & 
y^{2} = x^{3} - 21 x^{2} + 112 x \\
& -28 & 2 & x^{2}+ 7y^{2} & \sqrt{-7} & 16581375 & 
y^{2} = x^{3} + 42 x^{2} - 7x \\
\hline
\end{array}
\]
\end{footnotesize}
\begin{theorem}\label{th:classno1}
Let $E_{1}$ and $E_{2}$ be the pair in the table below.  Then, the
Mordell-Weil lattice $F^{(n)}_{E_{1},E_{2}}(\Qbar(t_{n}))$ for $n=5,6$
has rank~$18$.
\begin{footnotesize}
\[
\renewcommand{\arraystretch}{1.6}\setlength{\arraycolsep}{0.5em}
\begin{array}{c|l|l|c}
K & \hfil E_{1}, \quad E_{2} & \hfil F^{(n)}_{E_{1},E_{2}} &
T_{\Ino(E_{1},E_{2})} \\\hline
\multirow{6}{*}{$\Q(\sqrt{-3})$} 
& y_{1}^{2} = x_{1}^{3} + 1 
& \multirow{2}{*}{$Y^{2} = X^3 + \dfrac{t_{n}^n}{4} - 11 - \dfrac{4}{t_{n}^n}$} 
& \multirow{2}{*}{$\renewcommand{\arraystretch}{1}\setlength{\arraycolsep}{4pt}\begin{pmatrix} 4 & 2 \\ 2 & 4\end{pmatrix}$} 
\\[-4pt]
& y_{2}^{2} = x_{2}^{3} -15 x_{2} + 22 & &
\\\cline{2-4}
& y_{1}^{2} = x_{1}^{3} - 432 
& \multirow{2}{*}{$Y^{2} = X^3 +t_{n}^{n} -506 + \dfrac{9}{t_{n}^{n}}$}
& \multirow{2}{*}{$\renewcommand{\arraystretch}{1}\setlength{\arraycolsep}{4pt}\begin{pmatrix} 6 & 3 \\ 3 & 6\end{pmatrix}$} 
\\[-4pt]
& y_{2}^{2} = x_{2}^{3} - 108 x_{2}^{2} - 432x_{2} - 432 & &
\\\cline{2-4}
& y_{1}^{2} = x_{1}^{3} -15 x_{1} + 22
& \multirow{2}{*}{$Y^{2} = X^3 - 600X +9t_{n}^{n} -5566 -\dfrac{4}{t_{n}^{n}}$}
& \multirow{2}{*}{$\renewcommand{\arraystretch}{1}\setlength{\arraycolsep}{4pt}\begin{pmatrix} 12 & 6 \\ 6 & 12 \end{pmatrix}$} 
\\[-4pt]
& y_{2}^{2} = x_{2}^{3} +18 x_{2}^{2} - 12x_{2} + 2 &
\\\hline
\multirow{2}{*}{$\Q(\sqrt{-1})$}
& y_{1}^{2} = x_{1}^{3} - x_{1} 
& \multirow{2}{*}{$Y^{2} = X^3-33X+t_{n}^{n}+\dfrac{8}{t_{n}^{n}}$}
& \multirow{2}{*}{$\renewcommand{\arraystretch}{1}\setlength{\arraycolsep}{4pt}\begin{pmatrix} 4 & 0 \\ 0 & 4\end{pmatrix}$} 
\\[-4pt]
& y_{2}^{2} = x_{2}^{3} - 11 x_{2} - 14 & & 
\\\hline
\multirow{2}{*}{$\Q(\sqrt{-7})$}
& y_{1}^{2} = x_{1}^3-21 x_{1}^2 + 112x_{1}
& \multirow{2}{*}{$Y^{2} = X^3 -1275 X + 64t_{n}^{n} -21546 -\dfrac{64}{t_{n}^{n}}$}
& \multirow{2}{*}{$\renewcommand{\arraystretch}{1}\setlength{\arraycolsep}{4pt}\begin{pmatrix} 4 & 2 \\ 2 & 8\end{pmatrix}$} 
\\[-4pt]
& y_{2}^{2} = x_{2}^{3} + 42x_{2}^2 -7x_{2} & &
\\\hline
\end{array}
\]
\end{footnotesize}
\end{theorem}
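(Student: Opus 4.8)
The plan is to read off the rank directly from Proposition~\ref{prop:Fn-MWrank}, so that the whole theorem reduces to identifying the complex multiplication data of the listed curves. For both $n=5$ and $n=6$ one has $\min(4(n-1),16)=16$, so Proposition~\ref{prop:Fn-MWrank} gives
\[
\rank F^{(n)}_{E_{1},E_{2}}(\Qbar(t_{n})) = h + 16 - \varepsilon, \qquad n=5,6,
\]
where $h=\rank\Hom(E_{1},E_{2})$ and the correction term $\varepsilon$ vanishes precisely when $j(E_{1})\neq j(E_{2})$. Thus it suffices to verify, for each of the five pairs in the table, that the two curves are isogenous over $\Qbar$ with $\rank\Hom(E_{1},E_{2})=2$ and that $j(E_{1})\neq j(E_{2})$; this forces the rank to be $2+16=18$.

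First I would compute the $j$-invariant of each Weierstrass equation in the table and match it against the singular moduli recorded in the class number one table just above. For example, $y^{2}=x^{3}+1$ and $y^{2}=x^{3}-432$ both have $j=0$ and thus CM by the maximal order of $K=\Q(\sqrt{-3})$ (discriminant $-3$), while $y^{2}=x^{3}-15x+22$ has $j=54000$ (discriminant $-12$) and $y^{2}=x^{3}+18x^{2}-12x+2$ has $j=-12288000$ (discriminant $-27$); the rows for $\Q(\sqrt{-1})$ and $\Q(\sqrt{-7})$ are treated identically. Since every $j$-invariant occurring is a class number one singular modulus, it is a rational integer, so each curve is defined over $\Q$ and has complex multiplication by an order in the field $K$ displayed in its row.

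Next I would use the fact that, within each pair, both curves have CM by orders in the \emph{same} field $K$. Over $\Qbar$ they are $\C/\mathfrak{a}_{1}$ and $\C/\mathfrak{a}_{2}$ with each $\mathfrak{a}_{i}$ commensurable to $\sO_{K}$, so $\mathfrak{a}_{1}$ and $\mathfrak{a}_{2}$ are commensurable and there is a nonzero isogeny $E_{1}\to E_{2}$. Fixing such an isogeny identifies $\Hom(E_{1},E_{2})\otimes\Q$ with $\End(E_{1})\otimes\Q=K$, whence $\rank_{\Z}\Hom(E_{1},E_{2})=2$ and $h=2$. On the other hand, the two curves of each pair come from orders of \emph{different} conductor in $K$, so their $j$-invariants are distinct singular moduli and the curves are not isomorphic over $\Qbar$; thus $\varepsilon=0$. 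Substituting $h=2$ and $\varepsilon=0$ into the displayed formula gives rank $18$ for $n=5,6$, as claimed.

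The argument is a case-by-case application of the general rank formula, and its only delicate point is the bookkeeping of the CM data: confirming that the non-maximal-order curves (conductors $2$ and $3$) really carry the claimed singular $j$-invariants, and that the two curves of each pair, despite their differing conductors, are isogenous over $\Qbar$. Both are routine consequences of the class number one table and the standard theory of complex multiplication, so I expect no genuine obstacle beyond carefully checking the five rows.
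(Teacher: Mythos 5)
Your proposal is correct and follows essentially the same route as the paper: the paper's proof likewise observes that the curves in each row are isogenous (having CM by orders in the same imaginary quadratic field $K$) but non-isomorphic (distinct $j$-invariants), and then invokes Proposition~\ref{prop:Fn-MWrank} (via the proposition immediately following it) to conclude the rank is $18$ for $n=5,6$. You merely spell out the bookkeeping ($h=2$ and the vanishing of the correction term) in more detail than the paper does.
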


\begin{proof}
Elliptic curves belonging to the same $K$ in the previous table are
isogenous to each other with complex multiplication in some order in
$K$.  They are not isomorphic since $j$-invariants are different.
Thus, the Mordell-Weil rank of $F^{(5)}$ and $F^{(6)}$ are~$18$.  Note
that the choices of $E_{1}$ and $E_{2}$ are made so that the field of
definition of isogeny is as small as possible.
\end{proof}

\subsection{Class number 2 case}

It is also known that there are only finitely many negative
discriminants $D$ whose class number equals~$2$ (see for example
\cite[pp.~358--361]{Ireland-Rosen}).  Table~\ref{tbl:j(tau_1)} at the
end shows twenty-nine such $D$, together with $j(\tau_{1})$.
Table~\ref{tbl:EandF} shows an example of $E_{1}$ for
each~$j(\tau_{1})$, together with $F^{(n)}_{E_{1},E_{2}}$ where
$E_{2}$ is the Galois conjugate of $E_{1}$\footnote{Equations have
  also been given in \cite{Robert}.}. In these equations $X$ and $Y$
are rescaled so that the coefficients become simpler.  However, $t$ is
the original parameter $t_{n}$, and thus some equation contains a
variable $\varepsilon$, which indicates the fundamental unit of the
real quadratic field $\Q(j(\tau_{1}))$.  By rescaling the elliptic
parameter $t$ suitably as in Lemma~\ref{lem:quadratic}, we obtain
elliptic $K3$ surfaces defined over~$\Q$.

\begin{theorem}\label{th:rank18}
The twenty-nine Inose surfaces shown in Table~\ref{tbl:EandF} are
defined over $\Q$ and geometrically non-isomorphic. The elliptic
fibrations $F^{(5)}$ and $F^{(6)}$ constructed from them have
Mordell-Weil rank~$18$ over~$\Qbar$.
\end{theorem}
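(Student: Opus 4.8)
The plan is to reduce all three assertions to results already in hand, applied to each of the twenty-nine discriminants of Table~\ref{tbl:j(tau_1)}, isolating the pairwise non-isomorphism as the only genuinely new point. First I would recall that the tabulated values of $D$ are exactly the negative discriminants with class number $h_D = 2$, and that for each such $D$ the nontrivial class $ax^2+bxy+cy^2$ of $\textit{Cl}(D)$ produces, through \eqref{eq:tau}, a pair $(E_1,E_2)$ with $E_1$ defined over the real quadratic field $\Q(j(\tau_1))$ and $E_2$ its Galois conjugate. This is exactly the setup of Theorem~\ref{th:classno=2}; applying that theorem to each $D$ at once yields that the Inose surface $\Ino(E_1,E_2)$ has a model over $\Q$ and that $F^{(5)}_{E_1,E_2}$ and $F^{(6)}_{E_1,E_2}$ have Mordell-Weil rank $18$ over $\Qbar$.

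For clarity I would spell out the rank count. Since $\tau_1 = (-b+\sqrt D)/(2a)$ and $\tau_2 = (b+\sqrt D)/2$ both lie in $K = \Q(\sqrt D)$, the curves $E_1$ and $E_2$ have complex multiplication by orders in $K$ and are thus isogenous over $\Qbar$; hence $\Hom(E_1,E_2)\otimes\Q$ is one-dimensional over $\End(E_1)\otimes\Q = K$, so two-dimensional over $\Q$, giving $h = \rank\Hom(E_1,E_2) = 2$. Moreover $j(\tau_1)$ and $j(\tau_2)$ are the two distinct conjugate roots of the degree-two class equation $H_{\sO}(X) = 0$, so $E_1 \not\cong E_2$ and in particular $j(E_1) \neq j(E_2)$. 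Substituting $h = 2$ and the case $j(E_1)\neq j(E_2)$ into Proposition~\ref{prop:Fn-MWrank} gives the rank as $2 + \min(4(n-1),16) = 2+16 = 18$ for $n = 5,6$.

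It then remains to show that the twenty-nine Inose surfaces are pairwise geometrically non-isomorphic, and here I would appeal to the Shioda--Inose correspondence, Theorem~\ref{th:singular-K3}. Under that bijection $\Ino(E_1,E_2)$ is determined up to $\C$-isomorphism by the $\SL_2(\Z)$-class of the attached form $ax^2+bxy+cy^2$, equivalently by the transcendental lattice with Gram matrix $\bigl(\begin{smallmatrix} 2a & b \\ b & 2c \end{smallmatrix}\bigr)$, of determinant $4ac - b^2 = |D|$. As the twenty-nine discriminants $D$ are distinct, so are the determinants $|D|$; $\SL_2(\Z)$-equivalent forms share a discriminant, so the twenty-nine forms lie in distinct classes, and the surfaces are therefore pairwise non-isomorphic over $\C$, and a fortiori over $\Qbar$.

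The conceptual skeleton above is brief; the real labor, and the main obstacle, lies in establishing Table~\ref{tbl:EandF} itself. For each discriminant one must compute $j(\tau_1)$ as a root of the degree-two class polynomial, write down an explicit Weierstrass model for $E_1$ over $\Q(j(\tau_1))$, and then perform the descent of Lemma~\ref{lem:quadratic} explicitly---rescaling the elliptic parameter by a suitable power of $\Delta_{E_1}$ and absorbing the fundamental unit $\varepsilon$ of $\Q(j(\tau_1))$ where it intervenes---so as to obtain the tabulated equation of $F^{(n)}_{E_1,E_2}$ defined over $\Q$. This is a finite computation performed once per discriminant (and recorded in the auxiliary files); once the table is validated, the three claims follow from Theorems~\ref{th:classno=2} and~\ref{th:singular-K3} together with Proposition~\ref{prop:Fn-MWrank}, exactly as above.
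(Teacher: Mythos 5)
Your proposal is correct and follows essentially the route the paper intends (the paper leaves this theorem without an explicit proof, deferring to Theorem~\ref{th:classno=2}, Lemma~\ref{lem:quadratic}, and Theorem~\ref{th:singular-K3}): apply Theorem~\ref{th:classno=2} to each of the twenty-nine class-number-two discriminants, count the rank via Proposition~\ref{prop:Fn-MWrank} with $h=2$, and distinguish the surfaces by the determinants $|D|$ of their transcendental lattices under the Shioda--Inose correspondence. Your explicit write-up of the non-isomorphism step and of the rank computation supplies exactly the details the paper leaves implicit.
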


\begin{remark}
As remarked earlier, the equation of the Inose surface may be given in
the form
\[
Y^2=X^3-3\root 3\of {J_{1}J_{2}}\,X
+t+\frac{1}{t}-2\sqrt{(1-J_{1})(1-J_{2})},
\]
where $J_{i}=j(\tau_{i})/1728$.  Quite often $\root
3\of{j(\tau_{1})j(\tau_{2})}$ becomes a rational integer (see
\cite[\S12]{Cox-primes}), and so does
$\sqrt{(1728-j(\tau_{1}))(1728-j(\tau_{2}))}$.  In that case the above
equation gives a model of the Inose surface over~$\Q$.  However, since
that is not always the case, we uniformly started from \eqref{eq:F6}.
By doing so, we can also keep track of the field of definition of
$F^{(n)}_{E_{1},E_{2}}(\Qbar(t_{n}))$.
  
\end{remark}

\begin{table}[h]
\caption{Discriminants with class number $2$.}
\label{tbl:j(tau_1)}
\begin{sideways}
\begin{minipage}{\textheight}
\begin{footnotesize}
\begin{tabular}{ 
>{$}l<{$}   >{$}c<{$}   >{$}c<{$}   >{$}c<{$} >{$}c<{$} >{$}c<{$} >{$}r<{$} >{$}c<{$}}
\toprule
\hfil D &	 K &	 d_K &	 f &	 \textrm{quadratic form} &	 H_{D} &	j(\tau_{1})	\\
\midrule							
\\							
-15 = -3 \times 5 &	 \Q(\sqrt{-15}) &	-15&	1&	 2x^{2}+xy+2y^{2}  &	 K(\sqrt{5}) &	-191025/2+85995 \sqrt{5}/2	\\
-20 = -2^2 \times 5 &	 \Q(\sqrt{-5 })  &	-20&	1&	 2x^{2}+2xy+3y^{2} &	 K(\sqrt{5}) &	632000-282880 \sqrt{5}	\\
-24 = -2^3 \times 3 &	 \Q(\sqrt{-6 })  &	-24&	1&	 2x^{2}+3y^{2} &	 K(\sqrt{2}) &	2417472-1707264 \sqrt{2}	\\
-32 = -2^5&	 \Q(\sqrt{-2 })  &	-8&	2&	 3x^{2}+2xy+3y^{2} &	 K(\sqrt{2}) &	26125000 - 18473000 \sqrt{2}	\\
-35 = -5 \times 7 &	 \Q(\sqrt{-35 })  &	-35&	1&	 3x^{2}+xy+3y^{2} &	 K(\sqrt{5}) &	-58982400 + 26378240 \sqrt{5}	\\
-36 = -2^2 \times 3^2 &	 \Q(\sqrt{-1 })  &	-4&	3&	 2x^{2}+2xy+5y^{2} &	 K(\sqrt{3}) &	76771008-44330496 \sqrt{3}	\\
-40 = -2^3 \times 5 &	 \Q(\sqrt{-10 })  &	-40&	1&	 2x^{2}+5y^{2} &	 K(\sqrt{5}) &	212846400 - 95178240 \sqrt{5}	\\
-48 = -2^4 \times 3 &	 \Q(\sqrt{-3 })  &	-3&	4&	 3x^{2}+4y^{2} &	 K(\sqrt{3}) &	1417905000-818626500 \sqrt{3}	\\
-51 = -3 \times 17 &	 \Q(\sqrt{-51 })  &	-51&	1&	 3x^{2}+3xy+5y^{2} &	 K(\sqrt{17}) &	-2770550784+671956992 \sqrt{17}	\\
-52 = -2^2 \times 13 &	 \Q(\sqrt{-13 })  &	-52&	1&	 2x^{2}+2xy+7y^{2} &	 K(\sqrt{13}) &	3448440000-956448000 \sqrt{13}	\\
-60 = -2^2 \times 3 \times 5 &	 \Q(\sqrt{-15 })  &	-15&	2&	 3x^{2}+5y^{2} &	 K(\sqrt{5}) &	37018076625/2 - 16554983445 \sqrt{5}/2	\\
-64 = -2^6&	 \Q(\sqrt{-1 })  &	-4&	4&	 4x^{2}+4xy+5y^{2} &	 K(\sqrt{2}) &	41113158120-29071392966 \sqrt{2}	\\
-72 = -2^3 \times 3^2  &	 \Q(\sqrt{-2 })  &	-8&	3&	 2x^{2}+9y^{2} &	 K(\sqrt{6}) &	188837384000+77092288000 \sqrt{6}	\\
-75 = -3 \times 5^2 &	 \Q(\sqrt{-3 })  &	-3&	5&	 3x^{2}+3xy+7y^{2} &	 K(\sqrt{5}) &	-327201914880 + 146329141248 \sqrt{5}	\\
-88 = -2^3 \times 11 &	 \Q(\sqrt{-22 })  &	-88&	1&	 2x^{2}+11y^{2} &	 K(\sqrt{2}) &	3147421320000-2225561184000 \sqrt{2}	\\
-91 = -7 \times 11 &	 \Q(\sqrt{-91 })  &	-91&	1&	 5x^{2}+3xy+5y^{2} &	 K(\sqrt{13}) &	-5179536506880+1436544958464 \sqrt{13}	\\
-99 = -3^2 \times 11 &	 \Q(\sqrt{-11 })  &	-11&	3&	 5x^{2}+xy+5y^{2} &	 K(\sqrt{33}) &	-18808030478336+3274057859072 \sqrt{33}	\\
-100 = -2^2 \times 5^2 &	 \Q(\sqrt{-1 })  &	-4&	5&	 2x^{2}+2xy+13y^{2} &	 K(\sqrt{5}) &	22015749613248-9845745509376 \sqrt{5}	\\
-112 = -2^4 \times 7 &	 \Q(\sqrt{-7 })  &	-7&	4&	 4x^{2}+7y^{2} &	 K(\sqrt{7}) &	137458661985000-51954490735875 \sqrt{7}	\\
-115 = -5 \times 23 &	 \Q(\sqrt{-115 })  &	-115&	1&	 5x^{2}+5xy+7y^{2} &	 K(\sqrt{5}) &	-213932305612800+95673435586560 \sqrt{5}	\\
-123 = -3 \times 41 &	 \Q(\sqrt{-123 })  &	-123&	1&	 3x^{2}+3xy+11y^{2} &	 K(\sqrt{41}) &	-677073420288000+105741103104000 \sqrt{41}	\\
-147 = -3^2 \times 7  &	 \Q(\sqrt{-7 })  &	-7&	3&	 3x^{2}+3xy+13y^{2} &	 K(\sqrt{21}) &	-17424252776448000 + 3802283679744000 \sqrt{21}	\\
-148 = -2^2 \times 37 &	 \Q(\sqrt{-37 })  &	-148&	1&	 2x^{2}+2xy+19y^{2} &	 K(\sqrt{37}) &	19830091900536000-3260047059360000 \sqrt{37}	\\
-187 = -11 \times 17 &	 \Q(\sqrt{-187 })  &	-187&	1&	 7x^{2}+3xy+7y^{2} &	 K(\sqrt{17}) &	-2272668190894080000+551203000178688000 \sqrt{17}	\\
-232 = -2^3 \times 29 &	 \Q(\sqrt{-58 })  &	-232&	1&	 2x^{2}+29y^{2} &	 K(\sqrt{29}) &	302364978924945672000-56147767009798464000 \sqrt{29}	\\
-235 = -5 \times 47 &	 \Q(\sqrt{-235 })  &	-235&	1&	 5x^{2}+5xy+13y^{2} &	 K(\sqrt{5}) &	-411588709724712960000 + 184068066743177379840 \sqrt{5}	\\
-267 = -3 \times 89 &	 \Q(\sqrt{-267 })  &	-267&	1&	 3x^{2}+3xy+23y^{2} &	 K(\sqrt{89}) &	-9841545927039744000000 + 1043201781864732672000 \sqrt{89}	\\
-403 = -13 \times 31 &	 \Q(\sqrt{-403 })  &	-403&	1&	 11x^{2}+9xy+11y^{2} &	 K(\sqrt{31}) &	-1226405694614665695989760000+340143739727246741938176000 \sqrt{13}	\\
-427 = -7 \times 61 &	 \Q(\sqrt{-427 })  &	-427&	1&	 7x^{2}+7xy+17y^{2} &	 K(\sqrt{61}) &	-7805727756261891959906304000 + 999421027517377348595712000 \sqrt{61}	\\
\bottomrule
\end{tabular}
\end{footnotesize}
\end{minipage}
\end{sideways}
\end{table}

\clearpage

\begin{table}[h]
\caption{$\Q$-curves over quadratic fields and associated K3 surfaces.}
\label{tbl:EandF}
\begin{sideways}
\begin{minipage}{\textheight}
\begin{footnotesize}
\begin{tabular}{ >{$}l<{$}   >{$}l<{$}   >{$}l<{$}   >{$}c<{$} }
\toprule
D &	\hfil E_{1}&	\hfil F^{(n)}_{E_{1},E_{2}} &		\\		
\midrule 	
\\		
-15&	y^2 = x^3-3 (-3+2 \sqrt{5}) x^2-24 (-3+\sqrt{5}) x&	Y^2 = X^3+1485 X -1728 \varepsilon^2 t^{n}-29106-1728/(\varepsilon^2 t^{n})&	\\				
-20&	y^2 = x^3-12 x^2+9 (\sqrt{5}+2) x&	Y^2 = X^3-1485 X -729 \varepsilon^3 t^{n}-45144+729/(\varepsilon^3 t^{n})&		\\			
-24&	y^2 = x^3-12 x^2+6 (3+2 \sqrt{2}) x&	Y^2 = X^3-459 X -27 \varepsilon^2 t^{n}+2484-27/(\varepsilon^2 t^{n})&		\\			
-32&	y^2 = x^3-6 (-3+\sqrt{2}) x^2+9 (3+2 \sqrt{2}) x&	Y^2 = X^3+15525 X -5832 \varepsilon^3 t^{n}-1886598+5832/(\varepsilon^3 t^{n})&	\\				
-35&	y^2 = x^3+84 (-15+7 \sqrt{5}) x-98 (115 \sqrt{5}-256)&	Y^2 = X^3+423360 X -250047 t^{n}-76366206-250047/t^{n}&	\\				
-36&	y^2 = x^3+12 (-1+\sqrt{3}) x^2+(3 (3+2 \sqrt{3})) x&	Y^2 = X^3-6831 X -81 \sqrt{3} t^{n}/\varepsilon+232848+81 \varepsilon \sqrt{3}/t^{n}&	\\				
-40&	y^2 = x^3-6 (\sqrt{5}+3) x^2+(55 \sqrt{5}+123) x&	Y^2 = X^3-435 X -t^{n}+3348-1/t^{n}&		\\			
-48&	y^2 = x^3-6 (3 \sqrt{3}-7) x^2+6 (2+\sqrt{3}) x&	Y^2 = X^3-9900 X +128 t^{n}+190256+128/t^{n}&	\\				
-51&	y^2 = x^3+12 (-5+\sqrt{17}) x+14 (-15+4 \sqrt{17})&	Y^2 = X^3-384 X +\varepsilon^2 t^{n}+4606+1/(\varepsilon^2 t^{n})&	\\				
-52&	y^2 = x^3-12 x^2+(18+5 \sqrt{13}) x&	Y^2 = X^3-1725 X -\varepsilon^3 t^{n}-27864+1/(\varepsilon^3 t^{n})&\\			
-60&	y^2 = x^3-6 (5 \sqrt{5}-13) x^2-6 (7+3 \sqrt{5}) x&	Y^2 = X^3-17835 X +32 \varepsilon^4 t^{n}-699622+32/(\varepsilon^4 t^{n})&	\\				
-64&	y^2 = x^3-(6 (3 \sqrt{2}-5)) x^2+(3+2 \sqrt{2}) x&	Y^2 = X^3-3243 X -16 \sqrt{2} t^{n}-320166+16 \sqrt{2}/t^{n}&	\\				
-72&	y^2 = x^3-(42 (2+\sqrt{6})) x^2-9 (-5+2 \sqrt{6}) x&	Y^2 = X^3-115275 X -27 \varepsilon t^{n}-15043196-27/(\varepsilon t^{n})&	\\				
-75&	y^2 = x^3+12 (69 \sqrt{5}-155) x-2 (-21760+9729 \sqrt{5})&	Y^2 = X^3-10560 X +5 t^{n}-460790+5/t^{n}&\\			
-88&	y^2 = x^3-132 (-1+\sqrt{2}) x^2+22 (17+12 \sqrt{2}) x&	Y^2 = X^3-52275 X -t^{n}-4598748-1/t^{n}&	\\			
-91&	y^2 = x^3+28 (-227+63 \sqrt{13}) x+1078 (-256+71 \sqrt{13})&	Y^2 = X^3+3264 X +t^{n}-137214+1/t^{n}&\\			
-99&	y^2 = x^3+108 (-3751+653 \sqrt{33}) x+7182 (-19543+3402 \sqrt{33})&	Y^2 = X^3+71808 X +27 t^{n}/\varepsilon+2936374+27 \varepsilon/t^{n}&	\\				
-100&	y^2 = x^3-(12 (3 \sqrt{5}-5)) x^2+(1/2) (105+47 \sqrt{5}) x&	Y^2 = X^3-691185 X -5 \sqrt{5} t^{n}+221205600+5 \sqrt{5}/t^{n}&	\\				
-112&	y^2 = x^3+6 (\sqrt{7}-8) x^2+(127+48 \sqrt{7}) x&	Y^2 = X^3-367275 X +64 \varepsilon^3 t^{n}-68796378+64/(\varepsilon^3 t^{n})&	\\				
-115&	y^2 = x^3+92 (-785+351 \sqrt{5}) x+1058 (-9984+4465 \sqrt{5})&	Y^2 = X^3-10560 X +t^{n}+1079298+1/t^{n}&	\\			
-123&	y^2 = x^3+480 (-8+\sqrt{41}) x-112 (-951+160 \sqrt{41})&	Y^2 = X^3-110400 X +\varepsilon^2 t^{n}+14229502+1/(\varepsilon^2 t^{n})&	\\				
-147&	y^2 = x^3+360 (142 \sqrt{21}-651) x-66 (-935424+204125 \sqrt{21})&	Y^2 = X^3-1713600 X -7 t^{n}+865686514-7/t^{n}&	\\				
-148&	y^2 = x^3-84 x^2+(882+145 \sqrt{37}) x&	Y^2 = X^3-4148925 X -\varepsilon^3 t^{n}-3252770136+1/(\varepsilon^3 t^{n})&	\\				
-187&	y^2 = x^3+220 (-51+13 \sqrt{17}) x+242 (-2765+676 \sqrt{17})&	Y^2 = X^3+326400 X +t^{n}+73279998+1/t^{n}&	\\			
-232&	y^2 = x^3-198 (\sqrt{29}-5) x^2+(135 \sqrt{29}+727) x&	Y^2 = X^3-512317875 X -t^{n}-4463313183252-1/t^{n}&	\\				
-235&	y^2 = x^3+2068 (3969 \sqrt{5}-8875) x+92778 (-461312+206305 \sqrt{5})&	Y^2 = X^3-4762560 X +t^{n}+4231914498+1/t^{n}&	\\				
-267&	y^2 = x^3+60 (-625+53 \sqrt{89}) x-14 (-232143+26500 \sqrt{89})&	Y^2 = X^3-168748800 X +\varepsilon^2 t^{n}+843767999998+1/(\varepsilon^2 t^{n})&	\\				
-403&	y^2 = x^3+5580 (-2809615+779247 \sqrt{13}) x-363258 (-2941504000+815826423 \sqrt{13})&	Y^2 = X^3+99470400 X +t^{n}+2352019840002+1/t^{n}&	\\				
-427&	y^2 = x^3+3080 (-236674+30303 \sqrt{61}) x-2254 (608549875 \sqrt{61}-4752926464)&	Y^2 = X^3-1119201600 X +t^{n}-15615066773502+1/t^{n}&	\\\bottomrule
\end{tabular}
\end{footnotesize}
\end{minipage}
\end{sideways}
\end{table}
\clearpage

\section{Examples} \label{sec:examples}

In this section, we illustrate the techniques of this paper by working
out the Mordell-Weil group explicitly for a few singular K3 surfaces
defined over $\Q$. We will use the set-up of the previous section,
choosing a few small discriminants. The basic idea is to obtain a
finite index sublattice of the Mordell-Weil lattice by combining the
formulas in the generic case (when the elliptic curves are
non-isogenous) with the extra sections coming from isogenies between
the pair of CM curves. To obtain the full Mordell-Weil group, we
saturate this sublattice. In practice, this will be most convenient
for the surface $F^{(6)}$, for which the Mordell-Weil lattice has
sublattices induced from $F^{(3)}$ and its twist, enabling us to
proceed in stages. One can also apply our methods to $F^{(5)}$, which
has the disadvantage that saturating the corresponding sublattice is
computationally more expensive (but that may be offset by the fact
that it is somewhat easier to calculate sections directly, owing to
convenient specialization maps at $t = 0$ and $\infty$). For
simplicity we restrict ourselves to $F^{(6)}$ here.

\begin{example}\label{eg:disc-16}
(cf. \cite[Example~4.5]{Kuwata:MW-rank})
Let $E_{1}$ and $E_{2}$ be given by
\begin{align*}
& E_{1}:y_{1}^{2} = x_{1}^{3} - x_{1},\\
& E_{2}:y_{2}^{2} = x_{2}^{3} - 11 x_{2} - 14.
\end{align*}
In this case $F^{(n)}_{E_{1},E_{2}}$ is given by
\[
F^{(n)}_{E_{1},E_{2}}:Y^{2} = X^3-33X+t_{n}^{n}+\frac{8}{t_{n}^{n}}.
\]
The matrix of the quadratic form associated with the Inose surface
$F^{(1)}_{E_1, E_2}$ is
\[
\begin{pmatrix}
4 & 0 \\
0 & 4
\end{pmatrix},
\]
so we first identify two $2$-isogenies from $E_1$ to $E_2$. Then using
the method described in \S3.2, we obtain a basis of
$F^{(1)}(\Qbar(t_{1}))$:
\begin{align*}
P_{1} & =\left(\frac{1}{144} ( s_{1,+}^2 
-72 s_{1,+}+ 400),
\frac{1}{1728}
s_{1,-}(s_{1,+}^{2}-108 s_{1,+} + 2560 \right),
\\
P_{2} &=\left(-\frac{1}{144}  (s_{1,+}^{2}
+ 72 s_{1,+} + 400),
\frac{i}{1728} s_{1,-}
( s_{1,+}^{2} + 108 s_{1,+}+ 2560) \right).
\end{align*}
where $s_{1,+} = t_1 + 8/t_1$ and $s_{1,-} = t_1 - 8/t_1$. Note that
we have to extend the base field to $k = \Q(i)$ to define the
isogenies.

Next, we study $F^{(3)}$ in this example.  The images of the above
sections in $F^{(3)}(\Qbar(t_{3}))$ are of height~$6$. The splitting
field of the $3$-torsion points of $E_{1}$ and $E_{2}$ is
\[
k(E_{1}[3],E_{2}[3])=\Q(i, \omega, 12^{1/4}), \quad \text{ where } \omega=(-1+\sqrt{-3})/2.
\]
Let $\alpha = \sqrt{3} = -i(2\omega+1)$, $\beta = \sqrt{2}$ and
$\gamma = 12^{1/4}$. The sections described in Theorem~\ref{th:F^(3)}
are given by
\begin{align*}
&P_{3} = \Bigl(-s_{3,+}-9, 3 i \alpha (s_{3,+}+4)\Bigr), \qquad
P_{4} = \Bigl(-s_{3,+}+9, -3\alpha (s_{3,+}-4)\Bigr),
\\
&P_{5} = \Bigl(-s_{3,+}+ i \alpha, \gamma \alpha (1+i)/2 (-s_{3,+}+2i \alpha) \Bigr),
\\
&\begin{aligned}
&P_{6} = 
\Bigl(\omega\big(-s_{3,+} + 3 \omega (1-i) \gamma/2 + 2\alpha + 3 i \big), 
\\
&\qquad\qquad
 \big( (\omega -1)i\gamma/2 + 3(\omega-1)(i+1)/2 \big) s_{3,+} + (7i\alpha -3)\gamma /2 + 3(i+1)\alpha \Bigr),
\end{aligned}
\\
&\sigma^{2}(P_{6}), \quad \sigma^{2}(P_{5}), \quad
\sigma^{2}(P_{4}), \quad \sigma^{2}(P_{3}),
\end{align*}
where $s_{3,+} = t_{3} + 2/t_{3}$ and $s_{3,-} = t_{3} - 2/t_{3}$.  The height paring matrix
with respect to the above eight sections coincides with the one in
Theorem~\eqref{th:F^(3)}.  Knowing that there are sections of height
smaller than~$6$ independent of the above sections, we search for
sections and find the following:
\begin{gather*}
P_{7}=\left(3-s_{3,+}, -3 s_{3,-}\right)
\quad
P_{8}=\left(-3-s_{3,+}, -3is_{3,-} \right)
\end{gather*}
With respect to the basis
$P_{3},P_{4},P_{5},P_{6},\sigma^{2}(P_{6})\sigma^{2}(P_{5}),
\sigma^{2}(P_{4}), \sigma^{2}(P_{3}), P_{7},P_{8}$ the height matrix
is
\[\renewcommand{\arraystretch}{1}
\left( 
\begin{array}{*{10}r} 
4&0&0&-2&1&0&0&-2&0&0\\  
0&4&0&-2&1&0&-2&0&0&0\\  
0&0&4&-2&1&-2&0&0&0&0\\  
-2&-2&-2&4&-2&1&1&1&0&0\\  
1&1&1&-2&4&-2&-2&-2&-1&-1\\  0&0&-2&1&-2&4&0&0&0&0\\  
0&-2&0&1&-2&0&4&0&0&2\\  
-2&0&0&1&-2&0&0&4&2&0\\  
0&0&0&0&-1&0&0&2&4&0\\  
0&0&0&0&-1&0&2&0&0&4\end {array} \right).
\]
Note that the Mordell-Weil lattice $F^{(3)}(\bar k(t_{3}))$ is generated
by sections of height~$4$ in this case.

The rational elliptic surface ${}^{t_{3}}F^{(3)}_{E_{1},E_{2}}$ is
\[
{}^{t_{3}}F^{(3)}_{E_{1},E_{2}}:t_{3}Y^2 = X^3-33 X+(t_{3}^3+8/t_{3}^3)
\]
In this case $\Q(E_{1}[2])=\Q$ and $\Q(E_{2}[2])=\Q(\sqrt{2})$, and
$\Delta_{E_{2}}/\Delta_{E_{1}}=2^{9}/2^{6}=(\sqrt{2})^{6}$.  So, the
field of definition of ${}^{t_{3}}F^{(3)}_{E_{1},E_{2}}(\Qbar(t_{3}))$ is
$\Q(\sqrt{2},\omega)$.  Following the recipe of Section
\ref{rationalF3twist}, we can obtain a basis of the Mordell-Weil
lattice.  A small modification of the basis there gives the following
simpler basis:
\begin{align*}
Q_1 &= \big( -\omega^2(11t_{3}^2 + 4\omega^2)/(2t_{3}), -21\beta(2\omega+1)t_{3}/4\big) \\
Q_2 &= \big( 2\omega(4-3\beta)t_{3} -3(1-2\beta) + 4\omega^2/t_{3}, \\
& \qquad 3(11-8\beta)t_{3} - 12\omega^2(3-2\beta) + 6\omega(4-\beta)/t_{3} + 6\beta/t_{3}^2 \big) \\
Q_3 &= \big( (2t_{3}^2 - 6t_{3} + 1)/t_{3}, -3(t_{3}^3-4t_{3}^2+t_{3}-1)/t_{3}^2 \big) \\
Q_4 &= \big( 2t_{3} - 3(1-2\beta) + 4(4-3\beta)/t_{3}, \\
& \qquad -3t_{3} + 6(1-2\beta) - 12(4-3\beta)/t_{3} + (96-66\beta)/t_{3}^2\big) \\
Q_5 &= \big( -\omega t_{3} + 4\omega^2(4-3\beta)/t_{3}, -3\omega^2(1-2\beta) + (96-66\beta)/t_{3}^2\big) \\
Q_6 &= \big( 2t_{3} + 3(1-2\beta) + 4(4-3\beta)/t_{3}, 3t_{3} + 6(1-2\beta) + 12(4-3\beta)/t_{3} + (96-66\beta)/t_{3}^2\big) \\
Q_7 &= \big( (2t_{3}^2 + 6t_{3} + 1)/t_{3}, 3(t_{3}^3 + 4t_{3}^2 + t_{3} + 1)/t_{3}^2 \big) \\
Q_8 &= \big( -t_{3} - 11/t_{3}, 21(1 + 2\omega)/t_{3}^2\big) \\
\end{align*}
which gives the height pairing matrix
\[
\left( 
\begin{array}{*{8}r} 
2&-1&0&0&0&0&0&0 \\
-1&2&-1&0&0&0&0&0\\
0&-1&2&-1&0&0&0&0\\
0&0&-1&2&-1&0&0&0\\
0&0&0&-1&2&-1&0&-1\\
0&0&0&0&-1&2&-1&0\\
0&0&0&0&0&-1&2&0\\
0&0&0&0&-1&0&0&2
\end{array}
\right).
\]
In order to find a basis for 
\[
F^{(6)}_{E_{1},E_{2}}:Y^2 = X^3-33t_{6}^4X+(t_{6}^{12}+8),
\]
we have to fill the gap between $L' = F^{(3)}_{E_{1},E_{2}}(\bar
k(t_{3}))\oplus {}^{t_{3}}F^{(3)}_{E_{1},E_{2}}(\bar k(t_{3}))$ and $L
= F^{(6)}_{E_{1},E_{2}}(\bar k(t_{6}))$. The lattices differ by a
power-of-two index. In principle, this is a routine calculation; we
simply have to check whether non-trivial coset of $L'$ modulo $2L'$ is
twice an element of the Mordell-Weil group $L$, and this boils down to
checking whether a suitable equation has a root in $\bar k
(t_{6})$. However, there are $2^{18} - 1$ such cosets, and the number
field involved is quite large, so we need to speed up the process by
reducing the large number of possible candidate cosets.

To do so, we apply the following useful procedure (a similar trick was
used in \cite{Shioda:2007}). First reduce modulo a suitable prime $p$
such that all the $6$-torsion points are defined over $\F_{p}$.  In
our case $p=193$ will do, as $x^4+1$ and $x^4-3$ split into linear
factors. Since arithmetic modulo $p$ is cheap, we can do the search
mentioned in the previous paragraph fairly quickly. This pins down the
likely candidates, and now we may solve the equation back in the
original number field $k$ for each of them.  We find new sections
$R_1, \dots, R_5$ which satisfy the relations

\begin{align*}
2R_1 &= P_3 - P_7 - Q_3 - 2Q_4 - 2Q_5 - Q_8 \\
2R_2 &= -P_3 - P_5 - 2P_6 - \sigma^2(P_3) + \sigma^2(P_4) + P_7 - P_8 \\
& \qquad + Q_3 + 2Q_4 + 2Q_5 + 2Q_6 + Q_7 + Q_8 \\
2R_3 &= -P_4 - P_5 - \sigma^2(P_4) - \sigma^2(P_5) - P_7 + P_8 + Q_3 + Q_4 + Q_5 + Q_6 + Q_7 \\
2R_4 &= P_3 - P_4 -Q_1 - Q_2 - 2Q_3 - 3Q_4 - 4Q_5 - 4Q_6 - 2Q_7 -2Q_8 \\
2R_5 &= \sigma^2(P_3) - \sigma^2(P_4) + Q_1 + 2Q_2 + 2Q_3 + 3Q_4 + 3Q_5 + Q_6 + Q_8.
\end{align*}
Recall that $\sigma = \sigma_{6}$ is the map
$\big(X(t_{6}),Y(t_{6})\big)\mapsto
\big(X(t_{6}/\zeta_{6}),Y(t_{6}/\zeta_{6})\big)$. The expressions for the $x$-
and $y$-coordinates of these new sections may be found in the
auxiliary files; apart from $R_1$, which is shown below, they are
quite complicated.

\[
R_{1}=\bigl(-\omega t_{6}^2+3+6 \omega\beta/t_{6} +4\omega^2/t_{6}^2, -3\omega t_{6}^2 -3 \omega^2\beta t_{6} + 18 \omega \beta/t_{6} + 24 \omega^2/t_{6}^2 + 6 \beta/t_{6}^3  \bigr),
\]

The sections
\begin{gather*}
P_{4},P_{5},P_{6},\sigma^{2}(P_{6}),\sigma^{2}(P_{5}), P_{7},P_{8},
Q_{1},Q_{3},Q_{4}, Q_{6}, Q_{7}, R_{1},R_{2},\sigma^{-1}(R_{2}),R_{3},R_{4},R_{5},
\end{gather*}
form a basis of the Mordell-Weil lattice $F^{(6)}(\Qbar(t_{6}))$. The height
paring matrix with respect to this basis is

\begin{small}
\[
\left(\begin {array}{*{18}r} 
 4 & 0 & -2 & 1 & 0 & 0 & 0 & 0 & 0 & 0 & 0 & 0 & 0 & 1 & -1 & -1 & -2 & 1 \\
 0 & 4 & -2 & 1 & -2 & 0 & 0 & 0 & 0 & 0 & 0 & 0 & 0 & 0 & 0 & -1 & 0 & 0 \\
 -2 & -2 & 4 & -2 & 1 & 0 & 0 & 0 & 0 & 0 & 0 & 0 & -1 & -2 & 1 & 1 & 0 & 0 \\
 1 & 1 & -2 & 4 & -2 & -1 & -1 & 0 & 0 & 0 & 0 & 0 & 1 & 1 & -2 & 1 & 0 & 0 \\
 0 & -2 & 1 & -2 & 4 & 0 & 0 & 0 & 0 & 0 & 0 & 0 & 0 & 0 & 0 & -1 & 0 & 0 \\
 0 & 0 & 0 & -1 & 0 & 4 & 0 & 0 & 0 & 0 & 0 & 0 & -2 & 1 & 2 & -2 & 0 & 1 \\
 0 & 0 & 0 & -1 & 0 & 0 & 4 & 0 & 0 & 0 & 0 & 0 & 0 & -1 & -1 & 1 & 0 & -1 \\
 0 & 0 & 0 & 0 & 0 & 0 & 0 & 4 & 0 & 0 & 0 & 0 & 0 & 0 & 0 & 0 & -1 & 0 \\
 0 & 0 & 0 & 0 & 0 & 0 & 0 & 0 & 4 & -2 & 0 & 0 & 0 & 0 & -1 & 1 & 0 & -1 \\
 0 & 0 & 0 & 0 & 0 & 0 & 0 & 0 & -2 & 4 & 0 & 0 & -1 & 1 & 0 & 0 & 0 & 1 \\
 0 & 0 & 0 & 0 & 0 & 0 & 0 & 0 & 0 & 0 & 4 & -2 & 2 & 1 & 0 & 0 & -2 & -1 \\
 0 & 0 & 0 & 0 & 0 & 0 & 0 & 0 & 0 & 0 & -2 & 4 & 0 & 0 & -1 & 1 & 0 & -1 \\
 0 & 0 & -1 & 1 & 0 & -2 & 0 & 0 & 0 & -1 & 2 & 0 & 4 & 0 & -1 & 1 & 0 & -2 \\
 1 & 0 & -2 & 1 & 0 & 1 & -1 & 0 & 0 & 1 & 1 & 0 & 0 & 4 & 0 & -1 & -1 & -1 \\
 -1 & 0 & 1 & -2 & 0 & 2 & -1 & 0 & -1 & 0 & 0 & -1 & -1 & 0 & 4 & -2 & 1 & 1 \\
 -1 & -1 & 1 & 1 & -1 & -2 & 1 & 0 & 1 & 0 & 0 & 1 & 1 & -1 & -2 & 4 & 0 & -1 \\
 -2 & 0 & 0 & 0 & 0 & 0 & 0 & -1 & 0 & 0 & -2 & 0 & 0 & -1 & 1 & 0 & 4 & 0 \\
 1 & 0 & 0 & 0 & 0 & 1 & -1 & 0 & -1 & 1 & -1 & -1 & -2 & -1 & 1 & -1 & 0 & 4 \\
\end {array}\right).
\] 
\end{small}

Its determinant equals $576=2^{6}3^{2}$, as expected.  Note that this
Mordell-Weil lattice is generated by sections of height~$4$. The field
of definition for the Mordell-Weil lattice of $F^{(6)}$ is
$\Q(i,\omega,12^{1/4},\sqrt{2}) = \Q( \sqrt{-1}, \sqrt{2}, 3^{1/4})$.

\end{example}

\begin{example}\label{eg:disc-28}
Let $E_{1}$ and $E_{2}$ be given by
\begin{align*}
& E_{1}:y_{1}^{2} = x_{1}^{3} - 21 x_{1}^{2} + 112x_{1},\\
& E_{2}:y_{2}^{2} = x_{2}^{3} + 42 x_{2}^{2} - 7 x_{2}.
\end{align*}
They have complex multiplication by $\alpha = \sqrt{-7}$. The matrix of the
quadratic form associated with the Inose surface $F^{(1)}_{E_1, E_2}$ is
\[
\begin{pmatrix}
4 & 2 \\
2 & 8
\end{pmatrix},
\]
so there are $2$- and $4$-isogenies from $E_1$ to $E_2$. The
$2$-isogeny $\ph:E_{1}\to E_{2}$ is given by
\[
\ph(x_{1},y_{1}) = \left(\frac{x_{1}^2-21x_{1}+112}{x_{1}}, 
\frac{y_{1}(x_{1}^2-112)}{x_{1}^2}\right)
\]
Also, there is a $4$-isogeny $\psi:E_{1}\to E_{2}$ given by
$\psi(x_{1},y_{1}) = (x_{2},y_{2})$, where 
the $x$-coordinate is given by
\begin{align*}
&x_{2}=\frac{-(3+\alpha)x_{1}(2x_{1}-21-\alpha)(x_{1}-7-\alpha)^2}
{8(x_{1}-14+2\alpha)^2(2x_{1}-21+\alpha)}.
\end{align*}
For the $2$-isogeny $\ph$, the intersection \eqref{intersection}
consists of the image of $2$-torsion points and two effective divisors
$D^{\pm}$ of degree~$2$ defined over $\Q(t_{2})$.  Write
$D^{\pm}_{\ph}=P^{\pm}_{1}+P^{\pm}_{2}$, where $P^{\pm}_{1}$ and
$P_{2}^{\pm}$ are points on the cubic curve defined over
$\Qbar(t_{2})$.  Let $L^{\pm}$ be the line passing through
$P^{\pm}_{1}$ and $P^{\pm}_{2}$.  Then, each third point of
intersection $P^{\pm}_{3}$ is a $\Q(t_{2})$ rational point, given by
\[
(x^{\pm}_{1},x^{\pm}_{2}) = \left(\frac{84}{t_{2}^2\pm4t_{2}+8}, -\frac{21t_{2}^2}{t_{2}^2\pm4t_{2}+8}\right).
\]
For the $4$-isogeny $\psi$, the intersection \eqref{intersection}
consists of the image of $2$-torsion points and two effective divisors
$D^{\pm}_{\psi}$ of degree~$5$ defined over $\Q(\sqrt{-7})(t_{2})$.
Write $D^{\pm}_{\psi}=Q^{\pm}_{1}+\cdots+Q^{\pm}_{5}$, where
$Q^{\pm}_{1},\dots,Q^{\pm}_{5}$ are points on the cubic curve defined
over $\Qbar(t_{2})$.  There exists conics $C^\pm$ passing through
these five points $Q^{\pm}_{1},\dots,Q^{\pm}_{5}$.  Then, the sixth
point of intersection $Q^{\pm}_{6}$ of $C^\pm$ with the cubic curve is
a $\Q(\sqrt{-7})(t_{2})$-rational point.  The problem of finding the
coordinates $(x^{\pm}_{1},x^{\pm}_{2})$ of $Q^{\pm}_{6}$ can be
reduced to a linear algebra problem of determining the coefficients
of~$C^\pm$.  Namely, let $p(x_{1})$ be the quintic equation satisfied
by the $x_{1}$-coordinates of the five points $Q_i^\pm$, with
coefficients in $K = \Q(\sqrt{-7})(t_{2})$. We work in the field $L =
K[x_{1}]/(p(x_{1}))$. Then, we compute $x_{2}$ in terms of $x_{1}$,
which follows from the quartic equation $\psi_{y}(x_{1})=\pm t_{2}$,
and it is an element of $L$.  So, now if we make the $5$ by $6$ matrix
whose columns are the coordinates of
$1,x_{1},x_{2},x_{1}x_{2},x_{1}^2,x_{2}^2$ in terms of the basis
$(1,x_{1},x_{1}^2,x_{1}^3,x_{1}^4)$ of $L$ as a $K$-vector space, we
just need to take the kernel of this matrix (which has coefficients in
$K$).  The $1$-dimensional kernel gives us the coefficients of the
conic. From there, by taking resultants and factoring it, we obtain
the sixth point.  It is given by
$(x^{\pm}_{1},x^{\pm}_{2})=(x^{\pm}_{1,n}/d^{\pm},x^{\pm}_{2,n}/d^{\pm})$,
where
\begin{align*}
x^{\pm}_{1,n} &=
(\pm 2t+5+\alpha)\bigl((21-\alpha)t^4\pm2(91-15\alpha)t^3 \\
& \qquad  +2(259 -11\alpha)t^2\pm2(63+29\alpha)t-7+3\alpha\bigr), \\
x^{\pm}_{2,n} &= -3(\pm7t-\alpha)(\pm2t-5-\alpha)(\pm2t+5+\alpha)t^2,
\\
d^{\pm} &= 2\bigl(\pm2t^5+(23-\alpha)t^4\pm(117-19\alpha)t^3
+(109+29\alpha)t^2\mp2(1-9\alpha)t-5+\alpha\bigr).
\end{align*}
Over $\Q(\sqrt{-7})$, the Weierstrass equation of the elliptic curve $F^{(n)}_{E_{1},E_{2}}$ is given by
\[
F^{(n)}_{E_{1},E_{2}}: Y^{2} = X^3-1275 X  + 64\,t_{n}^{n} - 21546 - \frac{64}{t_{n}^{n}}.
\]
Using the construction of Proposition \ref{prop:sectionsfromisogeny},
the points obtained above yield points in $F^{(1)}(\bar k(t_{1}))$
\[
(X,Y) =\left(-\frac{1}{63}\Bigl(4 s_{1,-}^{2} -252 s_{1,-}+1339\Bigr), 
\frac{4}{1323}\alpha s_{1,+}
\Bigl(2 s_{1,-}^{2} -189 s_{1,-} +3977\Bigr)\right),
\]
and $(X,Y)=(X_{0}/d^{2},Y_{0}/d^{3})$, where
\begin{align*}
X_{0} &=
\Bigl(\frac{1-\alpha}{2}\Bigr)^{4} s_{1,-}^{4}
+16(33-4\alpha)\Bigl(\frac{1+\alpha}{2}\Bigr)^{4}
s_{1,-}^{3} + 36(1783+324\alpha) s_{1,-}^{2}
\\
& \qquad
+ 16(100923+3701\alpha) \Bigl(\frac{1+\alpha}{2}\Bigr)^{4} s_{1,-}
 + 64(12531+3413\alpha)\Bigl(\frac{1+\alpha}{2}\Bigr)^{6}, \\
Y_{0}&=
s_{1,+} \left(-\Bigl(\frac{1-\alpha}{2}\Bigr)^{6} s_{1,-}^{5}
-96(33-4\alpha)\Bigl(\frac{1+\alpha}{2}\Bigr)^{2} s_{1,-}^{4} +2^{5}(2531+1260\alpha)\Bigl(\frac{1+\alpha}{2}\Bigr)^{6}s_{1,-}^{3} \right.
\\
& \qquad  -2^{7}(18898-98053\alpha)
\Bigl(\frac{1+\alpha}{2}\Bigr) s_{1,-}^{2} 
-2^{10}(1735197-165636\alpha)\Bigl(\frac{1+\alpha}{2}\Bigr)^{2} s_{1,-} \\
& \qquad \left. -2^{14}(58959-85160\alpha)\Bigl(\frac{1+\alpha}{2}\Bigr)^{6} \right) \\
d &= 6\Bigl((11+4\alpha)s_{1,-}-4(1-11\alpha)\Bigr).
\end{align*}
Here we have used $s_{1,+} = t_{1} + 1/t_{1}$ and $s_{1,-} = t_{1} - 1/t_{1}$ to
condense the above formulae. These points form a basis of
$F^{(1)}(\bar k(t_{1}))$, and the height matrix is given by
\[
\left(\begin{array}{rr}
2 & -1 \\ -1 & 4
\end{array}\right).
\]
One can proceed to compute the Mordell-Weil group of $F^{(6)}$; this
is done in the auxiliary files. The field of definition is
$\Q\Big(\sqrt{-1},\sqrt{3},\sqrt{(3 + \sqrt{21})/2}\Big)$.
\end{example}

\begin{example}\label{eg:disc-15}
Discriminant $\Delta=-15$.

The class number of $\Delta=-15$ is $2$, and the non trivial quadratic
form is represented by
\[
\begin{pmatrix}  4 & 1 \\ 1 & 4 \end{pmatrix},
\quad \text{ or } \quad
2x^{2} + xy + 2 y^{2}.
\]
The Hilbert class field of $K=\Q(\sqrt{-15})$ equals $H=K(\sqrt{5})$.
The value of $j(\tau)$ for $\tau_{1}=(-1+\sqrt{-15})/4$ and
$\tau_{2}=(1+\sqrt{-15})/2$ are given by
\begin{align*}
j(\tau_{1})
&=\frac{-191025+85995\sqrt{5}}{2}
={\bar \eta}^{5}(3\pi_{5}\bar \pi_{11})^{3},
\\
j(\tau_{2})
&=\frac{-191025-85995\sqrt{5}}{2}
=-\eta^{5}(3\pi_{5}\pi_{11})^{3},
\end{align*}
where $\eta = (1+\sqrt{5})/2$ (the Golden ratio) is the fundamental
unit of $\Q(\sqrt{5})$, $\pi_{5}=\sqrt{5}$, $\pi_{11}=(1+3\sqrt{5})/2$
is the generator of a prime ideal above~$11$, and $\bar{\quad}$
indicates the conjugate $\sqrt{5}\mapsto -\sqrt{5}$.  We also have
\begin{align*}
j(\tau_{1}) - 1728 &=-3^{3}(\bar{\eta}^{3}7\pi_{11})^{2},\\
j(\tau_{2}) - 1728 &=-3^{3}(\eta^{3}7\bar \pi_{11})^{2}.
\end{align*}
We remark that in this case $j(\tau_{i})$ are not perfect cubes in
$H$.  While $j(\tau_{i})-1728$ are not perfect squares over $K$, they
are perfect squares in $H$ as $H$ contains
$\sqrt{-3}=\sqrt{-15}/\sqrt{5}$.

One of the elliptic curves whose $j$-invariant equals $j(\tau_{1})$ and $j(\tau_{2})$ are
\begin{align*}
&E_{1}: y^{2} = x^3-3\bigl(3-2\sqrt{5}\bigr)x^2+24\bigl(3-\sqrt{5}\bigr)x, \\
&E_{2}: y^{2} = x^3-3\bigl(3+2\sqrt{5}\bigr)x^2+24\bigl(3+\sqrt{5}\bigr)x.\\
\end{align*}
There are two $2$-isogenies between them, one defined over
$\Q(\sqrt{5})$ and the other defined over $H$.  So, $E_{1}$ and
$E_{2}$ are so-called $\Q$-curves. Over $H$, the Inose
surface can be transformed to
\[
F^{(1)}_{E_{1},E_{2}}:Y^{2} = X^{3} + 165 X +64 t_{1} +1078 + \frac{64}{t_{1}}.
\]

The Mordell-Weil lattice $F^{(6)}_{E_{1},E_{2}}(\Q(t_{1}))$ has rank~$18$,
and is defined over $H(\root 3\of \eta,E_{1}[6],E_{2}[6])$. The auxiliary
files contain an explicit basis. Note that $H(\root 3\of \eta
,E_{1}[6],E_{2}[6])=H(\root 3\of \eta) = \Q(\sqrt{-3}, \root 3\of \eta)$.
\end{example}

\begin{example}\label{eg:disc-20}
Discriminant $\Delta=-20$.

The class number of $\Delta=-20$ is $2$, and the non trivial quadratic
form is represented by
\[
\begin{pmatrix}  4 & 2 \\ 2 & 6 \end{pmatrix},
\quad \text{ or } \quad
2x^{2} + 2xy + 3y^{2}.
\]
The Hilbert class field of $K=\Q(\sqrt{-5})$ equals $H=K(\sqrt{5})$.
Using the same notation as in Example~\ref{eg:disc-15}, the value of
$j(\tau)$ for $\tau_{1}=(-1+\sqrt{-15})/4$ and
$\tau_{2}=(1+\sqrt{-15})/2$ are written as
\begin{align*}
j(\tau_{1})
&=632000-282880\sqrt{5}=
({\bar \eta}^{3}2^{2}\pi_{5}\bar \pi_{11})^{3},\\
j(\tau_{2})
&=632000+282880\sqrt{5}=
(\eta^{3}2^{2}\bar \pi_{5}\pi_{11})^{3},
\end{align*}
and we also have
\begin{align*}
j(\tau_{1}) - 1728
&=\eta^{3}(2^{4}\bar \pi_{11}\pi_{19})^{2},
\\
j(\tau_{2}) - 1728
&={\bar \eta}^{3}(2^{4}\pi_{11}\bar \pi_{19})^{2},
\end{align*}
where $\pi_{19}=1+2\sqrt{5}$.  As we can see in this example,
$j(\tau_{i})$ are perfect cubes, but $j(\tau_{i})-1728$ are not
perfect squares.

One of the elliptic curves whose $j$-invariant equals $j(\tau_{1})$ is
given by
\[
E_{1}:y^2 = x^3-4 x^2+\bigl(2+\sqrt{5}\bigr) x.
\]
This elliptic curve is $2$- and $3$-isogenous to its Galois conjugate
\[
E_{2}:y^2 = x^3-4 x^2+\bigl(2-\sqrt{5}\bigr) x,
\]
not over the field $\Q(\sqrt{5},\sqrt{-1})$ but over
$\Q(\sqrt{5},\sqrt{-1},\sqrt{2})$.  In fact, no twist of $E_{1}$ over
$\Q(\sqrt{5})$ is isogenous over $\Q(\sqrt{5},\sqrt{-1})$ to the
corresponding twist of $E_{2}$.  So, $E_{1}$ is not a $\Q$-curve in a
narrow sense.  Nevertheless, the Inose surface $F^{(1)}_{E_1,E_2}$ is
isomorphic over $\Q(\sqrt{5})$ to
\[
Y^{2}= X^3- \frac{55}{3} X - t_{1} - \frac{1672}{27} + \frac{1}{t_{1}}.
\]
Considering the fact that the $E_{1}$ and $E_{2}$ are at the same time
$2$- and $3$-isogenous over $\Q(\sqrt{5},\sqrt{-1},\sqrt{2})$, we
expect that the splitting field of $6$-torsion points of $E_{1}$ and
$E_{2}$ is a relatively small extension of
$\Q(\sqrt{5},\sqrt{-1},\sqrt{2})$.  Indeed, on the modular curve
$X(6)$ we discussed in \S4, the elliptic curves corresponding to the
point $\bigl((\eta-1)(1+\sqrt{-1}),\big(
(1+2\sqrt{-1})\eta-(1+3\sqrt{-1}) \big)\sqrt{\eta}\bigr)$ and its
conjugate under $\sqrt{5}\mapsto -\sqrt{5}$ are given by
\begin{align*}
&E'_{1}: 3\bigl(2+3\sqrt{-1}+\sqrt{5}\sqrt{-1}\bigr)\,y^{2} 
= x^3-4 x^2+\bigl(2+\sqrt{5}\bigr)x,
\\
&E'_{2}: 3\bigl(2+3\sqrt{-1}-\sqrt{5}\sqrt{-1}\bigr)\,y^{2} 
= x^3-4 x^2+\bigl(2-\sqrt{5}\bigr)x.
\end{align*}
So, we see that all the $6$-torsion points of $E_{1}$ and $E_{2}$ are
defined over $\Q(\sqrt{-1},\sqrt{2},\sqrt{3}, \sqrt{\eta} )$, and the
Mordell-Weil lattice $F^{(6)}_{E_{1},E_{2}}(\Qbar(t_{6}))$ is defined over
this field. An explicit basis is given in the computer files.

\end{example}

\end{document}